\providecommand{\U}[1]{\protect\rule{.1in}{.1in}}
\newtheorem{theorem}{Theorem}[section]
\newtheorem{conjecture}[theorem]{Conjecture}
\newtheorem{corollary}[theorem]{Corollary}
\newtheorem{definition}[theorem]{Definition}
\newtheorem{example}[theorem]{Example}
\newtheorem{lemma}[theorem]{Lemma}
\newtheorem{proposition}[theorem]{Proposition}
\newtheorem{remark}[theorem]{Remark}
\newtheorem{question}[theorem]{Question}
\newenvironment{proof}[1][Proof]{\noindent\textbf{#1.} }{\ \rule{0.5em}{0.5em}}
\date{}
\begin{document}

\title{Almost invariant subspaces of shift operators and products of Toeplitz and
Hankel operators}
\author{Caixing Gu, In Sung Hwang, Hyoung Joon Kim, Woo Young Lee, and Jaehui Park}
\maketitle

\begin{abstract}
In this paper we formulate the almost invariant subspaces theorems
of backward shift operators in terms of the ranges or kernels of
product of Toeplitz and Hankel operators. This approach simplifies
and gives more explicit forms of these almost invariant subspaces
which are derived from \ related nearly backward shift invariant
subspaces with finite defect. Furthermore, this approach also leads
to the surprising result that  the almost invariant subspaces of
backward shift operators are the same as the almost invariant
subspaces of forward shift operators which were treated only briefly
in literature.

\

Keywords: Almost invariant subspaces,
shift operators, Toeplitz operators, Hankel operators

\bigskip

MSC (2010): 47A15, 47B35, 47B38

\end{abstract}


\section{Introduction}

The recent study of almost invariant subspaces inside Banach spaces
attributes its starting point {\color{red}to} the paper by
Androulakis-Popov-Tcaciuc-Troitsky \cite{APTT} in 2009, see also
\cite{PopovT} \cite{Sirotkin} \cite{Sirotkin2} \cite{Tacacius}
\cite{Jung}. But an equivalent definition on Hilbert spaces occurred
in Hoffman \cite{Hoffman} in 1978 in connection with essentially
invariant subspaces \cite{BrownPearcy} in 1971 (see Definition
\ref{def2} and Definition \ref{defBP}).  In the sequel, let $H$ be a
complex Hilbert space and $B(H)$ be the algebra of bounded linear
operators acting on $H$. For an operator $T\in B(H)$, write $R(T)$
and $N(T)$ for the range and the kernel of $T$, respectively.
If $M$ is a closed subspace of $H$, then the orthogonal
projection from $H$ onto $M$ is denoted by $P_M$.

\begin{definition}
\label{def1} \textrm{\cite{APTT}} Let $T\in B(H).$ A closed subspace
$H_{1}$ of $H$ is called an almost invariant subspace of $T$ if
there is a finite dimensional subspace $W$ such that $TH_{1}\subset
H_{1}+W.$ The minimal possible dimension $n$ of $W$ is called the
defect of $H_{1},$ denoted by
$\varsigma(H_{1})=\varsigma(T,H_{1})=n,$ and $W$ is called a minimal
defect space of $H_{1}.$ If $W$ is a minimal defect space of $H_{1}$
such that $W\perp H_{1}$, then $W$ is called a minimal orthogonal
defect space of $H_{1}.$
\end{definition}

We say $H_{1}$ is an almost reducing subspace of $T$ if both $H_{1}$
and $H_{1}^{\perp}$ are almost invariant for $T$ or equivalently
$H_{1}$ is almost invariant for both $T$ and $T^{\ast}.$ To avoid
triviality, often $H_{1}$ is assumed such that both $H_{1}$ and
$H_{1}^{\perp}$ are infinite dimensional, such an $H_{1}$ is called
a  {\it half-space}. The study of almost invariant
subspaces is motivated by the Invariant Subspace Problem which is
still open on Hilbert spaces. By a series of papers from
several authors \cite{PopovT} \cite{Sirotkin} \cite{Sirotkin2} \cite{Tacacius},
Tcaciuc proved the following important theorem.

\begin{theorem}
Let $X$ be a separable Banach space and $T$ be a bounded linear operator on
$X.$ Then $T$ has an almost invariant half-space with defect at most $1.$
\end{theorem}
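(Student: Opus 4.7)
My plan is to build an infinite-dimensional, infinite-codimensional closed subspace $H_1 \subset X$ together with a nonzero vector $w \in X$ satisfying $TH_1 \subset H_1 + \mathbb{C}w$. The natural vehicle is a basic-sequence construction: produce vectors $(x_n)_{n \ge 1}$ inductively so that, for a fixed $w$ chosen at the start,
\[
Tx_n \in \mathrm{span}\{x_1,\ldots,x_{n+1},w\} \qquad (n \ge 1),
\]
while the sequence remains basic with uniformly bounded basis constant. Continuity of $T$ then propagates the exact inclusion from $\mathrm{span}\{x_n\}$ to its closure $H_1 := \overline{\mathrm{span}}\{x_n\}$, yielding $TH_1 \subset H_1 + \mathbb{C}w$ and hence defect at most $1$. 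The half-space requirement is secured by insisting that each $x_{n+1}$ avoid a growing reserve of linearly independent directions (for instance, the first $n$ terms of a fixed basic sequence in $X$ transverse to the construction), guaranteeing infinite codimension of $H_1$ at the end.

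The inductive step begins with any nonzero $w \in X$ and a unit vector $x_1 \in X \setminus \mathbb{C}w$. Given $x_1, \ldots, x_n$ basic with constant $\le K$, I invoke the Krein--Milman--Rutman (Mazur) perturbation principle applied to the finite-dimensional subspace $\mathrm{span}\{x_1, \ldots, x_n\}$ with tolerance $\epsilon_n$ to obtain a finite-codimensional subspace $Y_n \subset X$ such that any unit vector of $Y_n$ extends the sequence while keeping the basis constant below $K(1+\epsilon_n)$. Decompose $Tx_n = f_n + \alpha_n w + r_n$ with $f_n \in \mathrm{span}\{x_1, \ldots, x_n\}$, $\alpha_n \in \mathbb{C}$, and $r_n$ in a chosen algebraic complement of $\mathrm{span}\{x_1, \ldots, x_n, w\}$. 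The desired move is to set $x_{n+1} := r_n/\|r_n\|$ (or any vector of $Y_n$ in the trivial case $r_n = 0$); this vector is exactly what the defect-$1$ inclusion demands.

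The essential obstacle, and the technical heart of the argument, is reconciling two apparently conflicting constraints on $x_{n+1}$: the defect-$1$ inclusion pins it down as a scalar multiple of the residual $r_n$, while preservation of basicity restricts it to the Mazur subspace $Y_n$. Since $\mathrm{codim}\,Y_n$ can (and typically will) far exceed $\dim\mathrm{span}\{x_1,\ldots,x_n,w\} = n+1$, one cannot in general arrange a complement of the latter to sit inside $Y_n$. The way out, which I expect to require the most delicate bookkeeping, is a diagonal/nested-subspace argument: at stage $n$ one must select the complement along which $r_n$ is read off with foresight about the Mazur subspaces that will appear at all later stages, and possibly refine $w$ or the $\alpha_j$ slightly, with tolerances $\epsilon_n$ decreasing fast enough to control the cumulative error. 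Once this coupling is handled, uniform control of the basis constants, the built-in reserve directions, and the exact inclusions built in at each step combine routinely to show that $(x_n)$ is basic, that $H_1$ is a half-space, and that $TH_1 \subset H_1 + \mathbb{C}w$, completing the proof.
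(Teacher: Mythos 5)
There is a genuine gap, and it sits precisely at the step you yourself defer to ``delicate bookkeeping.'' First, for context: the paper does not prove this theorem at all --- it is quoted from Tcaciuc \cite{Tacacius}, the culmination of the series \cite{PopovT} \cite{Sirotkin} \cite{Sirotkin2} --- so your argument must stand alone, and the deferred step is the entire content of the theorem. Once $x_1,\dots,x_n$ and $w$ are fixed, the inclusion $Tx_n\in\mathrm{span}\{x_1,\dots,x_{n+1},w\}$ determines $x_{n+1}$, up to a scalar, modulo the $(n+1)$-dimensional space $\mathrm{span}\{x_1,\dots,x_n,w\}$: choosing a different ``algebraic complement'' only moves $r_n$ within this fixed coset. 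So the admissible vectors $x_{n+1}$ form an affine set of dimension $n+1$, whereas membership in the Mazur subspace $Y_n$ imposes a number of linear conditions equal to $\mathrm{codim}\,Y_n$, which depends on $x_1,\dots,x_n$ and is in no way bounded by $n+1$; an affine set of dimension $n+1$ need not meet a subspace of larger codimension, and the proposed fix --- ``foresight about the Mazur subspaces that will appear at all later stages'' --- is unavailable, since $Y_n$ is only defined after $x_1,\dots,x_n$ have been chosen. No mechanism for ``refining $w$'' is given either, and changing $w$ restarts the whole induction.

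The deeper defect is that your scheme uses no spectral information about $T$, and it demonstrably fails on a concrete operator. Take $X=H^2$, $T=S$ the unilateral shift, $x_1=z$, $w=1$. By induction, $x_n=z^n+p_{n-1}(z)$ with $\deg p_{n-1}\le n-1$ (whatever coset representatives you choose), since $Tx_n=z^{n+1}+zp_{n-1}(z)$ is congruent to $z^{n+1}$ modulo $\mathrm{span}\{x_1,\dots,x_n,w\}=\mathrm{span}\{1,z,\dots,z^n\}$. Hence $H_1+\mathbb{C}\cdot 1$ contains all polynomials; being closed (a one-dimensional extension of the closed subspace $H_1$) and dense, it is all of $H^2$, so $\mathrm{codim}\,H_1\le 1$ and $H_1$ is never a half-space. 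Your ``reserve directions'' insurance cannot repair this: any functional vanishing on all $x_n$ vanishes on a subspace of codimension at most one, so at most one independent functional can ever be killed, and infinite codimension is unreachable from this starting data no matter how the coset freedom is spent. The theorem is of course true for $S$ --- Theorem \ref{main} and Theorem \ref{main2} of the paper exhibit its almost invariant half-spaces, and e.g.\ $x_1=\theta$ with $\theta$ an infinite Blaschke product yields the invariant half-space $\theta H^2$ with defect $0$ --- but reaching such subspaces requires adapting the initial data to the structure of $T$. That is exactly what Tcaciuc's actual proof supplies (working at $\lambda\in\partial\sigma(T)$, splitting into eigenvalue/no-eigenvalue cases, and building the sequence from approximate eigenvectors and $(T-\lambda)$-orbits so that residuals are controlled and infinite codimension can be enforced), and what your triangularization from arbitrary $x_1,w$ has no substitute for.
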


On the other hand, the celebrated Beurling-Lax-Halmos (BLH) invariant subspace
Theorem \cite{Beurling} \cite{Lax} \cite{Halmos} for the (forward) shift
operator is a cornerstone of modern analysis. It has played an important role
in operator theory, function theory and applications. The following question
is natural.

\begin{question}
\label{question}What are the almost invariant subspaces of the shift operator?
\end{question}

It turns out the almost invariant subspaces of the backward shift operator are
characterized recently, see \cite{ChalendarCP} \cite{ChalendarGP} \cite{ChDas}
\cite{OLoughlin}. By formulating the results of these papers using the ranges
of Toeplitz and Hankel operators, we will answer the above motivating
question. Now we introduce the function theoretic background of the shift
operator. Let $L^{2}$ be the space of square integrable functions on the unit
circle $\mathbb{T}$ with respect to the normalized Lebesgue measure. Let
$H^{2}$ be the Hardy space on the open unit disk $\mathbb{D}$.
$L^{\infty
}$ and $H^{\infty}$ are the algebras of bounded functions in $L^{2}$ and
$H^{2}$ respectively. Let $E$ and $F$ be two complex separable Hilbert spaces.
Let $B(E,F)$ be the set of bounded linear operators from $E$ into
$F.$ $L_{E}^{2}$ and $H_{E}^{2}$ denote $E$-valued $L^{2}$ and $H^{2}$
spaces, respectively. $L_{B(E)}^{\infty}$ and $H_{B(E)}%
^{\infty}$ are operator-valued $L^{\infty}$ and $H^{\infty}$ algebras, and
$L_{B(E,F)}^{\infty}$ and $H_{B(E,F)}^{\infty}$ are operator-valued
$L^{\infty}$ and $H^{\infty}$ spaces.

Denote by $P$ the projection from $L_{E}^{2}$ to $H_{E}^{2}$ and $Q:=I-P$ the
projection from $L_{E}^{2}$ to $\overline{zH_{E}^{2}}:=L_{E}^{2}\ominus
H_{E}^{2},$ where $E$ is any complex separable Hilbert space. Let $\Phi\in
L_{B(E,F)}^{\infty}.$ The multiplication operator by $\Phi$ from $L_{E}^{2}$
into $L_{F}^{2}$ is denoted by $M_{\Phi}.$ The (block) Toeplitz operator
$T_{\Phi}$ from $H_{E}^{2}$ into $H_{F}^{2}$ is defined by%
\[
T_{\Phi}h=P\left[  \Phi h\right]  ,h\in H_{E}^{2}.
\]
When $E=F,$ let $T_{z}$($=T_{zI_{E}}$) denote the shift operator on $H_{E}%
^{2}$ for some $E$ which the context will make clear. We will also use
$S_{E}$ or just $S$ to denote this shift operator $T_{z}$ on $H_{E}^{2}$ and
$S_{E}^{\ast}$ or just $S^{\ast}$ for the backward shift.
We note that $E$ may be regarded as a subspace
$H^2_E$ in the sense that if for each $x \in E$, we define
$$
f_x(z):=x \ \ \hbox{for all} \ z \in \mathbb T,
$$
then $f_x\in H^2_E$, so that $E\cong \{f_x: x \in E\}\subseteq
H^2_E$. Thus if there is no confusion in the context of $H^2_E$, we
will still use the notation $P_E$. Observe that
\[
I-S_{E}S_{E}^{\ast}=P_{E}.
\]
The
Toeplitz operator $T_{\Phi}=A$ is characterized by the operator equation
$S_{F}^{\ast}AS_{E}=A.$

Let $J$ be defined on $L_{E}^{2}$ by
\[
Jf(z)=\overline{z}f(\overline{z}),\ \ f\in L_{E}^{2}.
\]
$J$ maps $\overline{zH_{E}^{2}}$ onto $H_{E}^{2}$, and $J$ maps $H_{E}%
^{2}$ onto $\overline{zH_{E}^{2}}.$ Furthermore $J$ is a unitary operator,
\[
J^{\ast}=J,J^{2}=I,\text{ }JQ=PJ,\text{ and }JP=QJ.
\]
For $\Phi\in L^{\infty}$, the Hankel operator $H_{\Phi}$ from
$H_{E}^{2}$ into $H_{F}^{2}$ is defined by%
\[
H_{\Phi}h=JQ\left[  \Phi h\right]  =PJ\left[  \Phi h\right]
,\ \ h\in H_{E}^{2}.
\]
The Hankel operator $H_{\Phi}=A$ is characterized by the operator equation
$AS_{E}=S_{F}^{\ast}A$. Set $\widetilde{\Phi}(z)=\Phi(\overline{z})^{\ast}.$
It is easy to check that $H_{\Phi}^{\ast}=H_{\widetilde{\Phi}}.$ The Toeplitz
and Hankel operators are connected in the following basic formula:%
\begin{equation}
T_{\Omega\Psi}-T_{\Omega}T_{\Psi}=H_{\Omega^{\ast}}^{\ast}H_{\Psi} \label{thc}%
\end{equation}
for symbols $\Omega$ and $\Psi$ with compatible dimensions, see \cite{BS2}
\cite{Pe}.

We emphasize that again Toeplitz and Hankel operators $T_{\Phi}$ and $H_{\Phi}$
could be between different spaces $H_{E}^{2}$ and $H_{F}^{2}$ according to
whether the symbol $\Phi\ $acts between different spaces $E$ and $F.$ In this
paper, all underlying Hilbert spaces $E,$ $F,$ and $E_{i}$ for the
vector-valued Hardy spaces $H_{E}^{2},$ $H_{F}^{2},$ and $H_{E_{i}}^{2}$ will
be finite dimensional. We also note that $T_{\Phi}$ and $H_{\Phi}$ could be
unbounded operators, where $\Phi\in L_{B(E,F)}^{2}$, that is, $\Phi$ is a
matrix-valued function whose entries belong to $L^{2}.$ In this case an
operator equation involving $T_{\Phi}$ and $H_{\Phi}$ is valid when it acts on
polynomials or $H^{\infty}$ functions, see for example \cite{GuH} where the
product of unbounded operator $T_{\Phi}H_{\Psi}$ can be interpreted by
using the
bilinear form $\left\langle T_{\Phi}H_{\Psi}h,g\right\rangle :=\left\langle
H_{\Psi}h,T_{\Phi^{\ast}}g\right\rangle $ for $h,g$ being polynomials or
$H^{\infty}$ functions.

The $S_{E}^{\ast}$-almost invariant subspaces (and related nearly $S_{E}%
^{\ast}$-invariant subspaces) have been characterized by
Chalendar-Chevrot-Partington \cite{ChalendarCP}, Chalendar-Gallardo-Partington
\cite{ChalendarGP}, Chattopadhyay-Das-Pradhan \cite{ChDas}, and independently
in O'Loughlin \cite{OLoughlin}, see Theorem \ref{nearlym} and Corollary
\ref{almostm} below for their developments and characterizations.

In this paper, we reformulate their results using operator ranges
and prove a number of results on the almost invariant subspaces of
$S_{E}^{\ast}$ and $S_{E}.$ Our approach achieve three goals: first the
reformulation simplifies the results of $S_{E}^{\ast}$-almost invariant
subspaces in \cite{ChalendarCP} \cite{ChalendarGP} \cite{ChDas}
\cite{OLoughlin} which were derived as consequences of related nearly
$S_{E}^{\ast}$-invariant subspaces, the reformulation bypasses the use of
nearly $S_{E}^{\ast}$-invariant subspaces; second our approach leads to more
explicit examples of $S_{E}^{\ast}$-invariant subspaces; third surprisingly,
we prove that almost invariant subspaces of $S_{E}$ are the same as the almost
invariant subspaces of $S_{E}^{\ast}$ and thus answer our motivating Question
\ref{question}\ completely.

Let us briefly outline the plan of the paper. In Section 2, we show that the
closures of ranges of finite sums of finite products of Hankel and Toeplitz
operators under a mild condition are $S_{E}^{\ast}$-almost
invariant and $S_{E}$-almost invariant. In Section 3, we
reformulate their results (see Theorem \ref{nearlym}) as follows.

\begin{theorem}
\label{thma}A closed subspace $M$ of $H_{F}^{2}$ is $S_{F}^{\ast}$-almost
invariant if and only if one of the following holds:

\begin{itemize}
\item[\textrm{(1)}] $M=R(T_{\Theta})$, where $\Theta\in H_{B(E,F)}^{\infty}$
is inner.

\item[\textrm{(2)}] $M=R(T_{\Phi}\left(  I_{E_{1}}-T_{\Theta}T_{\Theta}^{\ast
}\right)  )$, where $\Theta\in H_{B(E,E_{1})}^{\infty}$ is inner and pure, and
$\Phi\in H_{B(E_{1},F)}^{2}$ is such that $T_{\Phi}\left(  I_{E_{1}}-T_{\Theta
}T_{\Theta}^{\ast}\right)  $ is a partial isometry.
\end{itemize}
\end{theorem}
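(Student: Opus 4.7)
The plan is to prove the two implications separately. For sufficiency I would handle Case (1) by a direct commutation computation, and Case (2) by appealing to the Section 2 result that closures of ranges of products of Toeplitz and Hankel operators are $S_F^{\ast}$-almost invariant. For necessity I would invoke the existing characterization (Corollary~\ref{almostm}), which is stated in terms of inner functions and model spaces, and translate it directly into the operator-range language of the theorem.

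For Case (1), given $\Theta$ inner and $h\in H_E^2$, a direct computation yields
\[
S_F^{\ast}(T_\Theta h) \;=\; T_\Theta (S_E^{\ast}h) + (S_F^{\ast}\Theta)\,h(0),
\]
where $(S_F^{\ast}\Theta)(z):=(\Theta(z)-\Theta(0))/z \in H^2_{B(E,F)}$. The first summand lies in $R(T_\Theta)$, while the second, as $h$ varies over $H_E^2$, ranges over the finite-dimensional subspace $(S_F^{\ast}\Theta)E$. Hence $S_F^{\ast} R(T_\Theta) \subseteq R(T_\Theta) + (S_F^{\ast}\Theta)E$, exhibiting $R(T_\Theta)$ as $S_F^{\ast}$-almost invariant with defect at most $\dim E$. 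For Case (2), the partial-isometry hypothesis makes $R(T_\Phi(I-T_\Theta T_\Theta^{\ast}))$ automatically closed, so it coincides with its own closure, and the general Section 2 principle applied to the product $T_\Phi(I-T_\Theta T_\Theta^{\ast})$, together with the $S_{E_1}^{\ast}$-invariance of the model space $K_\Theta=(I-T_\Theta T_\Theta^{\ast})H_{E_1}^2$, yields the $S_F^{\ast}$-almost invariance.

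For necessity, I would start with $M$ an $S_F^{\ast}$-almost invariant subspace of $H_F^2$ and apply Corollary~\ref{almostm}. Its conclusion splits into two subcases. In the first, $M$ has the Beurling form $\Theta H_E^2$ for some inner $\Theta\in H^\infty_{B(E,F)}$; then $M=R(T_\Theta)$ and alternative (1) holds. In the second, $M$ is exhibited as the image $\Phi\cdot K_\Theta$ of a model space under a (possibly unbounded) $H^2$-multiplier $\Phi$; after normalizing $\Phi$ so that $T_\Phi(I-T_\Theta T_\Theta^{\ast})$ becomes a partial isometry and absorbing any inner factor into $\Theta$ to make it pure, we obtain $M=R(T_\Phi(I-T_\Theta T_\Theta^{\ast}))$ and alternative (2) holds. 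The pureness of $\Theta$ is forced by requiring the factorization to be irredundant.

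The main obstacle I anticipate lies in the necessity direction, specifically in converting the ``model-space form'' output of Corollary~\ref{almostm} into an explicit $\Phi\in H^2_{B(E_1,F)}$ for which $T_\Phi(I-T_\Theta T_\Theta^{\ast})$ is a genuine partial isometry with range equal to $M$. This demands a careful normalization of $\Phi$ — potentially interpreted only as a bilinear form in the sense described in the introduction, since $\Phi$ need not lie in $H^\infty$ — and an exploitation of the purity of $\Theta$ to ensure that $K_\Theta$ has the right size to parameterize $M$ without redundancy. A secondary concern is confirming that alternatives (1) and (2) are exhaustive, i.e., that Corollary~\ref{almostm} produces no intermediate shape that fails to fit either template.
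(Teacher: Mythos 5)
Your overall route coincides with the paper's: sufficiency via the Section~2 commutator results (Theorem~\ref{maina}), necessity by translating the known representation of nearly/almost $S^{\ast}$-invariant subspaces into operator-range form. The genuine gap is exactly the step you flag as the ``main obstacle'' and then defer to a ``careful normalization'': normalizing $\Phi$ is not an available move (altering $\Phi$ changes $\Phi K_\Theta$, so you cannot force the partial-isometry property after the fact), and in fact no normalization is needed. Theorem~\ref{nearlym} already hands you a specific $\Phi=\bigl[\,G_0\ \ zg_1\ \cdots\ zg_p\,\bigr]$, whose columns are an orthonormal basis of $M\ominus(M\cap zH^2_{\mathbb{C}^m})$ together with $z$ times an orthonormal basis of the defect space, and it also hands you the norm identity $\left\Vert G\right\Vert^2=\sum_{i=0}^{p}\left\Vert k_i\right\Vert^2$. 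That identity says precisely that $T_\Phi$ acts isometrically on the parameter space $K$, i.e.\ $\left[T_\Phi(I-T_\Theta T_\Theta^{\ast})\right]^{\ast}\left[T_\Phi(I-T_\Theta T_\Theta^{\ast})\right]=I-T_\Theta T_\Theta^{\ast}$ once $K=K_\Theta$; since the right-hand side is a projection, $T_\Phi(I-T_\Theta T_\Theta^{\ast})$ is a partial isometry. This one observation (equation~(\ref{product}) in the paper) is the entire content of the translation, and without it your necessity argument does not close.

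Two further corrections. The dichotomy (1)/(2) does not arise as two ready-made subcases of Corollary~\ref{almostm}; it arises from applying the Beurling--Lax--Halmos theorem to the $S^{\ast}$-invariant subspace $K\subset H^2_{\mathbb{C}^{r+p}}$ in the representation: if $K$ is the whole space, the isometric action forces $\Phi$ to be inner and $M=R(T_\Phi)$, giving (1) with $\Theta:=\Phi$; otherwise $K=K_\Theta$ and $I-T_\Theta T_\Theta^{\ast}$ is the projection onto $K$, giving (2). This also answers your ``secondary concern'' about exhaustiveness. Next, purity of $\Theta$ is not obtained by ``absorbing an inner factor of $\Phi$ into $\Theta$''; it is obtained by splitting off the constant unitary summand $\Theta=\Theta_1\oplus U$, for which $I-T_\Theta T_\Theta^{\ast}=(I-T_{\Theta_1}T_{\Theta_1}^{\ast})\oplus 0$, so $\Theta$ may be replaced by its pure part without changing the range. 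Finally, invoking Corollary~\ref{almostm} is harmless but heavier than needed: the paper uses Theorem~\ref{nearlym} directly, since an $S_F^{\ast}$-almost invariant subspace is trivially nearly $S_F^{\ast}$-invariant with finite defect, and the corollary's extra condition on $S^{\ast}G_0$ plays no role in the qualitative statement --- the paper remarks on exactly this point after Theorem~\ref{main}. Your sufficiency direction is correct as written and matches the paper (the boundedness caveat for $\Phi\in H^2$ is glossed over in the paper as well).
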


The idea of using ranges of partial isometries to represent almost
invariant subspaces is inspired by \cite{CRoss} \cite{Timotin} \cite{GuLuo},
where the invariant subspaces of $S_{F}\oplus S_{F}^{\ast}$ are represented as
ranges of partial isometries involving Toeplitz and Hankel operators. Such an
approach also enables us to observe the following.

\begin{corollary}
Let $M$ be a closed subspace of $H_{F}^{2}.$ Then $M$ is $S_{F}^{\ast}$-almost
invariant if and only if $M$ is $S_{F}$-almost invariant. Consequently, If $M$
is $S_{F}$-almost invariant, then $M$ is $S_{F}$-almost reducing.
\end{corollary}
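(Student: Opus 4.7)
The plan is to combine Theorem \ref{thma} with the Section 2 result (closures of ranges of finite sums of finite products of Toeplitz and Hankel operators are simultaneously $S_F^*$-almost invariant and $S_F$-almost invariant) and with a formal duality between $T$-almost invariance and $T^*$-almost invariance.

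First I would record the duality: for any bounded operator $T$ on a Hilbert space $H$ and any closed subspace $M \subseteq H$, the subspace $M$ is $T$-almost invariant if and only if $M^\perp$ is $T^*$-almost invariant. This is immediate from Definition \ref{def1} by an adjoint argument: if $TM \subseteq M + W$ with $\dim W < \infty$, write $Tx = m(x) + w(x)$ with $m(x) \in M$ and $w(x) \in W$; then for $y \in M^\perp$ and $x \in M$ one has $\langle T^* y, x\rangle = \langle y, w(x)\rangle$, so $P_M T^* y$ lies in a fixed subspace $W' \subseteq M$ of dimension at most $\dim W$, whence $T^* M^\perp \subseteq M^\perp + W'$.

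Next I would establish the forward direction, that $M$ is $S_F^*$-almost invariant implies $M$ is $S_F$-almost invariant. By Theorem \ref{thma}, $M$ equals either $R(T_\Theta) = \Theta H_E^2$ (Case 1) or $R(T_\Phi(I - T_\Theta T_\Theta^*))$ (Case 2). Case 1 is outright $S_F$-invariant via $S_F T_\Theta = T_\Theta S_E$ for analytic $\Theta$. In Case 2, the operator $B := T_\Phi(I - T_\Theta T_\Theta^*) = T_\Phi - T_\Phi T_\Theta T_\Theta^*$ is a finite sum of finite products of Toeplitz and Hankel operators, so the Section 2 result yields $S_F$-almost invariance of $R(B)$ directly. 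A hands-on alternative combines $S_F T_\Phi = T_\Phi S_{E_1} - H_{\bar z I_F}^* H_\Phi$ (a consequence of (\ref{thc})) with the almost-commutation $S_{E_1}(I - T_\Theta T_\Theta^*) = (I - T_\Theta T_\Theta^*) S_{E_1} + K$ for some finite rank $K$, giving $S_F B = B S_{E_1} + (\text{finite rank})$; since $B$ has closed range by the partial isometry hypothesis, $S_F M \subseteq M + W$ with $\dim W < \infty$.

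The reverse implication is then automatic from duality: if $M$ is $S_F$-almost invariant, then $M^\perp$ is $S_F^*$-almost invariant, hence $S_F$-almost invariant by the forward direction applied to $M^\perp$, hence $M = (M^\perp)^\perp$ is $S_F^*$-almost invariant by duality once more. For the almost reducing consequence, the iff shows that $M$ is $S_F$-almost invariant if and only if $M^\perp$ is $S_F$-almost invariant (both equivalent to $M$ being $S_F^*$-almost invariant), so $M$ and $M^\perp$ are simultaneously $S_F$-almost invariant, which is precisely the definition of $S_F$-almost reducing. The main technical obstacle is the Case 2 step of the forward direction; this is either subsumed by the Section 2 theorem or else requires the direct commutator computation with careful tracking of the finite-rank error terms coming from (\ref{thc}).
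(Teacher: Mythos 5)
Your proposal is correct and takes essentially the same route as the paper's proof of Corollary \ref{equivalent}: the forward direction combines the representation of Theorem \ref{thma} with Theorem \ref{maina} (your hands-on alternative for Case 2 is precisely the commutator computation the paper performs in Theorem \ref{main2}), and the reverse direction applies the duality of Lemma \ref{duality} twice. Your derivation of the almost-reducing consequence from the equivalence is also the intended one.
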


The above corollary gives explicit forms of $S_{F}$-almost invariant
subspaces, otherwise, $S_{F}$-almost invariant subspaces are represented as
$M^{\perp}$, where $M$ is as in Theorem \ref{thma}. See Remark 3.3 in
\cite{ChalendarGP} and Remark 3.7 in \cite{ChDas}, where it is observed that
$S_{E}^{\ast}M\subset M\oplus W$ if and only if $S_{E}\left(  M\oplus
W\right)  ^{\perp}\subset\left(  M\oplus W\right)  ^{\perp}\oplus W.$ The
orthogonal complement $M^{\perp}$ seems difficult to identify, for example, we
know $M$ is infinite dimensional if and only if $\Theta$ is not a finite
Blaschke-Potapov product, but we know $M^{\perp}$ to be infinite dimensional
in some special cases. In Section 4, we study in more details $M=R(T_{\Phi
}\left(  I_{E_{1}}-T_{\Theta}T_{\Theta}^{\ast}\right)  )$, where $\Phi$ is
also inner and give more explicit examples of $S_{E}^{\ast}$-invariant or
$S_{E}$-invariant subspaces.

\

An observation on Banach spaces \cite{APTT} says that a closed subspace $M$ is
$T$-almost invariant if and only if $M$ is $\left(  T+T_{0}\right)
$-invariant for some finite rank operator $T_{0},$ see Lemma \ref{apttl}
below. In section 5, we first give a more precise result on Hilbert spaces by
using the equivalent definition of almost invariant subspaces in
\cite{Hoffman}. Namely, if $M$ is $T$-almost invariant, we identify all finite
rank operators $T_{0}$ such that $M$ is $\left(  T+T_{0}\right)  $-invariant.
We then apply this theorem to $S_{F}^{\ast}$-almost invariant subspaces. For a
given $M=R(T_{\Phi}\left(  I_{E_{1}}-T_{\Theta}T_{\Theta}^{\ast}\right)  )$ as
in Theorem \ref{thma}, we write down all finite rank operators $T_{0}$ in
terms of $\Phi,$ $\Theta,$ $M$ such that $M$ is $\left(  S_{F}^{\ast}%
+T_{0}\right)  $-invariant or $\left(  S_{F}+T_{0}\right)  $-invariant or
$\left(  S_{F}+T_{0}\right)  $-reducing.


\section{The ranges of products of Toeplitz and Hankel operators are $S^{\ast
}$-almost invariant}

\

We begin with:

\begin{lemma}
\label{basic}Let $\Psi\in L_{B(E,E_{1})}^{\infty}$ and $\Phi\in L_{B(E_{1}%
,F)}^{\infty}.$ Then $S_{F}^{\ast}T_{\Phi}H_{\Psi}-T_{\Phi}H_{\Psi}S_{E}$ is
of finite rank, more precisely,%
\begin{equation}
S_{F}^{\ast}T_{\Phi}H_{\Psi}-T_{\Phi}H_{\Psi}S_{E}=S_{F}^{\ast}T_{\Phi
}P_{E_{1}}H_{\Psi}. \label{one}%
\end{equation}
Also, we have
\begin{equation}
S_{F}H_{\Phi}T_{\Psi}-H_{\Phi}T_{\Psi}S_{E}^{\ast}=H_{\Phi}S_{E_{1}}^{\ast
}T_{\Psi}P_{E}-P_{F}H_{\Phi}S_{E_{1}}^{\ast}T_{\Psi}+S_{F}H_{\Phi}P_{E_{1}%
}T_{\Psi}. \label{two}%
\end{equation}
\end{lemma}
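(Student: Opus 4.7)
The plan is to work directly from the two defining intertwining identities stated in the excerpt: the Toeplitz characterization $S^{\ast}_F T_{\Phi} S_{E_1}=T_{\Phi}$ and the Hankel characterization $H_{\Psi} S_E = S_{E_1}^{\ast} H_{\Psi}$ (and the analogous identities for the symbols in the second equation), combined with the orthogonal defect relations $I - S_{E} S_{E}^{\ast}=P_E$ on each of the three Hardy spaces.

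For (\ref{one}), I would first isolate the ``commutator'' of the backward shift with a Toeplitz operator: right-multiplying $T_{\Phi}=S_{F}^{\ast}T_{\Phi}S_{E_{1}}$ by $S_{E_{1}}^{\ast}$ and using $S_{E_{1}}S_{E_{1}}^{\ast}=I-P_{E_{1}}$ gives
\[
T_{\Phi}S_{E_{1}}^{\ast}=S_{F}^{\ast}T_{\Phi}-S_{F}^{\ast}T_{\Phi}P_{E_{1}},
\]
i.e.\ $S_{F}^{\ast}T_{\Phi}-T_{\Phi}S_{E_{1}}^{\ast}=S_{F}^{\ast}T_{\Phi}P_{E_{1}}$. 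Substituting $H_{\Psi}S_{E}=S_{E_{1}}^{\ast}H_{\Psi}$ into $T_{\Phi}H_{\Psi}S_{E}$ and using this commutator identity then yields (\ref{one}) in one line.

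For (\ref{two}) the same idea is executed twice. Starting from $H_{\Phi}S_{E_{1}}=S_{F}^{\ast}H_{\Phi}$, left-multiplication by $S_{F}$ and the identity $S_{F}S_{F}^{\ast}=I-P_{F}$ give $S_{F}H_{\Phi}S_{E_{1}}=(I-P_{F})H_{\Phi}$; then right-multiplication by $S_{E_{1}}^{\ast}$ and $S_{E_{1}}S_{E_{1}}^{\ast}=I-P_{E_{1}}$ produce
\[
S_{F}H_{\Phi}-H_{\Phi}S_{E_{1}}^{\ast}=S_{F}H_{\Phi}P_{E_{1}}-P_{F}H_{\Phi}S_{E_{1}}^{\ast}.
\]
Independently, writing $T_{\Psi}=T_{\Psi}S_{E}S_{E}^{\ast}+T_{\Psi}P_{E}$ and using $S_{E_{1}}^{\ast}T_{\Psi}S_{E}=T_{\Psi}$ gives
\[
S_{E_{1}}^{\ast}T_{\Psi}-T_{\Psi}S_{E}^{\ast}=S_{E_{1}}^{\ast}T_{\Psi}P_{E}.
\]
Composing the first identity on the right with $T_{\Psi}$ and then replacing $S_{E_{1}}^{\ast}T_{\Psi}$ by $T_{\Psi}S_{E}^{\ast}+S_{E_{1}}^{\ast}T_{\Psi}P_{E}$ inside the term $H_{\Phi}S_{E_{1}}^{\ast}T_{\Psi}$ produces the claimed decomposition after the cancellation of $H_{\Phi}T_{\Psi}S_{E}^{\ast}$.

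The main obstacle is purely bookkeeping: one must keep track of three distinct shifts $S_{E},S_{E_{1}},S_{F}$ acting on three different Hardy spaces and apply the defect projections $P_{E},P_{E_{1}},P_{F}$ on the correct side at each step. There is no deeper difficulty; the finite-rank assertion is then automatic because each $P_{E_{i}}$ has finite rank under the paper's standing assumption that $E,E_{1},F$ are finite dimensional.
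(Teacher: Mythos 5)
Your argument is correct and is essentially the paper's own proof: you derive the same two auxiliary identities, $S_{F}^{\ast}T_{\Phi}=T_{\Phi}S_{E_{1}}^{\ast}+S_{F}^{\ast}T_{\Phi}P_{E_{1}}$ and $S_{F}H_{\Phi}=H_{\Phi}S_{E_{1}}^{\ast}-P_{F}H_{\Phi}S_{E_{1}}^{\ast}+S_{F}H_{\Phi}P_{E_{1}}$, from the defining intertwining relations together with $I-SS^{\ast}=P$ on each space, and then compose them with $H_{\Psi}$ and $T_{\Psi}$ exactly as the paper does. The only quibble is wording: in the final step for (\ref{two}) the term $H_{\Phi}T_{\Psi}S_{E}^{\ast}$ is not cancelled but simply moved to the other side of the identity.
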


\begin{proof}
Recall $S_{F}^{\ast}T_{\Phi}S_{E_{1}}=T_{\Phi}$ and $S_{E_{1}}^{\ast}H_{\Psi
}=H_{\Psi}S_{E}.$ Then%
\begin{align}
S_{F}^{\ast}T_{\Phi}S_{E_{1}}S_{E_{1}}^{\ast}  &  =T_{\Phi
}S_{E_{1}}^{\ast},\nonumber\\
S_{F}^{\ast}T_{\Phi}\left(  I-P_{E_{1}}\right)   &  =T_{\Phi}S_{E_{1}}^{\ast
}\ \ \text{and hence},\ S_{F}^{\ast}T_{\Phi}=T_{\Phi}S_{E_{1}%
}^{\ast}+S_{F}^{\ast}T_{\Phi}P_{E_{1}} \label{ts}%
\end{align}
and%
\begin{align*}
S_{F}^{\ast}T_{\Phi}H_{\Psi}  &  =\left(  T_{\Phi}S_{E_{1}}^{\ast}+S_{F}%
^{\ast}T_{\Phi}P_{E_{1}}\right)  H_{\Psi}\\
&  =T_{\Phi}S_{E_{1}}^{\ast}H_{\Psi}+S_{F}^{\ast}T_{\Phi}P_{E_{1}}H_{\Psi}\\
&  =T_{\Phi}H_{\Psi}S_{E}+S_{F}^{\ast}T_{\Phi}P_{E_{1}}H_{\Psi}.
\end{align*}
This proves (\ref{one}).

Similarly, it follows from $S_{F}^{\ast}H_{\Phi}=H_{\Phi}S_{E_{1}}$ that
\begin{align}
S_{F}S_{F}^{\ast}H_{\Phi}S_{E_{1}}^{\ast}  &  =S_{F}H_{\Phi}S_{E_{1}}S_{E_{1}%
}^{\ast},\nonumber\\
\left(  I-P_{F}\right)  H_{\Phi}S_{E_{1}}^{\ast}  &  =S_{F}H_{\Phi}\left(
I-P_{E_{1}}\right)  ,\nonumber\\
S_{F}H_{\Phi}  &  =H_{\Phi}S_{E_{1}}^{\ast}-P_{F}H_{\Phi}S_{E_{1}}^{\ast
}+S_{F}H_{\Phi}P_{E_{1}} \label{hs}%
\end{align}
and%
\begin{align*}
S_{F}H_{\Phi}T_{\Psi}  &  =\left(  H_{\Phi}S_{E_{1}}^{\ast}-P_{F}H_{\Phi
}S_{E_{1}}^{\ast}+S_{F}H_{\Phi}P_{E_{1}}\right)  T_{\Psi}\\
&  =H_{\Phi}S_{E_{1}}^{\ast}T_{\Psi}-P_{F}H_{\Phi}S_{E_{1}}^{\ast}T_{\Psi
}+S_{F}H_{\Phi}P_{E_{1}}T_{\Psi}\\
&  =H_{\Phi}T_{\Psi}S_{E}^{\ast}+H_{\Phi}S_{E_{1}}^{\ast}T_{\Psi}P_{E}%
-P_{F}H_{\Phi}S_{E_{1}}^{\ast}T_{\Psi}+S_{F}H_{\Phi}P_{E_{1}}T_{\Psi}.
\end{align*}
The proof is complete.
\end{proof}

\

Let $H_{1}$ be an almost invariant subspace of $T\ $and $W$ is the minimal
orthogonal defect space of $H_{1}.$ So $TH_{1}\subset H_{1}\oplus W.$ Since
$TH_{1}\subset H_{1}\oplus W$ is equivalent to $T^{\ast}\left(  H_{1}\oplus
W\right)  ^{\perp}\subset H_{1}^{\perp}=\left(  H_{1}\oplus W\right)  ^{\perp
}\oplus W.$ Thus $H_{1}$ is $T$-almost invariant with defect $n$ if and only
if $\left(  H_{1}\oplus W\right)  ^{\perp}$ is $T^{\ast}$-almost invariant
with defect $n.$ Furthermore, $T^{\ast}H_{1}^{\perp}=T^{\ast}\left[  \left(
H_{1}\oplus W\right)  ^{\perp}\oplus W\right]  \subset H_{1}^{\perp}+T^{\ast
}W,$ so $H_{1}^{\perp}$ is $T^{\ast}$-almost invariant with defect less than
or equal to $n.$

\

\begin{lemma}
\label{lem1}Let $\Psi\in L_{B(E,E_{1})}^{\infty}$ and $\Phi\in L_{B(E_{1}%
,F)}^{\infty}.$ The following statements hold.

\begin{itemize}
\item[\textrm{(i)}] $R(T_{\Phi}H_{\Psi})^{-}$ is $S_{F}^{\ast}$-almost
invariant and $\varsigma(R(T_{\Phi}H_{\Psi})^{-})\leq\dim E_{1}.$

\item[\textrm{(ii)}] $R(H_{\Psi}^{\ast}T_{\Phi}^{\ast})^{-}$ is $S_{E}^{\ast}%
$-almost invariant and $\varsigma(R(H_{\Psi}^{\ast}T_{\Phi}^{\ast})^{-}%
)\leq\dim E_{1}$.

\item[\textrm{(iii)}] $N(T_{\Phi}H_{\Psi})$ is $S_{E}$-almost invariant and
$\varsigma(S_{E},$ $N(T_{\Phi}H_{\Psi}))=\varsigma(S_{E}^{\ast},R(H_{\Psi
}^{\ast}T_{\Phi}^{\ast})^{-})\leq\dim E_{1}.$

\item[\textrm{(iv)}] $N(H_{\Psi}^{\ast}T_{\Phi}^{\ast})$ is $S_{F}$-almost
invariant and $\varsigma(S_{F},$ $N(T_{\Phi}H_{\Psi}))=\varsigma(S_{F}^{\ast
},R(T_{\Phi}H_{\Psi})^{-})\leq\dim E_{1}.$
\end{itemize}
\end{lemma}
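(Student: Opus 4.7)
The plan is to establish (i) and (ii) directly from the commutator identity (\ref{one}) of Lemma \ref{basic}, and to deduce (iii) and (iv) from them by passing to orthogonal complements.

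For (i), I will rearrange (\ref{one}) as
\[ S_{F}^{\ast}T_{\Phi}H_{\Psi}=T_{\Phi}H_{\Psi}S_{E}+S_{F}^{\ast}T_{\Phi}P_{E_{1}}H_{\Psi}, \]
and set $W:=R(S_{F}^{\ast}T_{\Phi}P_{E_{1}}H_{\Psi})$. Since $P_{E_{1}}$ projects $H_{E_{1}}^{2}$ onto the finite-dimensional copy of $E_{1}$, $\dim W\leq\dim E_{1}$, and in particular $W$ is closed. The identity immediately yields $S_{F}^{\ast}R(T_{\Phi}H_{\Psi})\subseteq R(T_{\Phi}H_{\Psi})+W$. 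To pass to the closure I will invoke two elementary facts: continuity of $S_{F}^{\ast}$ implies $S_{F}^{\ast}(A^{-})\subseteq (S_{F}^{\ast}A)^{-}$ for any subspace $A$, and finite-dimensionality of $W$ implies $(A+W)^{-}=A^{-}+W$. Chaining these gives $S_{F}^{\ast}R(T_{\Phi}H_{\Psi})^{-}\subseteq R(T_{\Phi}H_{\Psi})^{-}+W$, so the defect is at most $\dim E_{1}$.

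Part (ii) is the mirror image. Taking adjoints of (\ref{one}) produces
\[ S_{E}^{\ast}H_{\Psi}^{\ast}T_{\Phi}^{\ast}=H_{\Psi}^{\ast}T_{\Phi}^{\ast}S_{F}-H_{\Psi}^{\ast}P_{E_{1}}T_{\Phi}^{\ast}S_{F}, \]
and the same closure argument applied with $W':=R(H_{\Psi}^{\ast}P_{E_{1}}T_{\Phi}^{\ast}S_{F})$ (again of dimension at most $\dim E_{1}$) closes the argument.

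For (iii) and (iv) I will exploit the standard kernel/range duality $N(T_{\Phi}H_{\Psi})=(R(H_{\Psi}^{\ast}T_{\Phi}^{\ast})^{-})^{\perp}$ and $N(H_{\Psi}^{\ast}T_{\Phi}^{\ast})=(R(T_{\Phi}H_{\Psi})^{-})^{\perp}$. The paragraph immediately preceding Lemma \ref{lem1} already shows that if $H_{1}$ is $T$-almost invariant with defect $n$, then $H_{1}^{\perp}$ is $T^{\ast}$-almost invariant with defect at most $n$; applying this inequality in both directions (using $H_{1}^{\perp\perp}=H_{1}$ for closed $H_{1}$) forces the two defects to coincide. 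Thus (iv) follows from (i) and (iii) from (ii), with the asserted equalities of defect. The only step that takes any real care is the closure argument in (i)--(ii); once (\ref{one}) is in hand, the remainder is just bookkeeping.
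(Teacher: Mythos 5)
Your proposal is correct and follows essentially the same route as the paper: parts (i) and (ii) via the identity (\ref{one}) and its adjoint with the finite-rank perturbation $S_{F}^{\ast}T_{\Phi}P_{E_{1}}H_{\Psi}$, and parts (iii) and (iv) via the range--kernel duality $N(T_{\Phi}H_{\Psi})=\bigl(R(H_{\Psi}^{\ast}T_{\Phi}^{\ast})^{-}\bigr)^{\perp}$ together with the two-way application of the complement argument, which is exactly the paper's Lemma \ref{duality}(i). Your explicit justification of the closure step, using $(A+W)^{-}=A^{-}+W$ for finite-dimensional $W$, only spells out what the paper leaves implicit.
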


\begin{proof}
It follows from Lemma \ref{basic} that $S_{F}^{\ast}T_{\Phi}H_{\Psi}=T_{\Phi
}H_{\Psi}S_{E}+G$, where $G$ is a finite rank operator with rank less than or
equal to $\dim E_{1}.$ Thus $S_{F}^{\ast}R(T_{\Phi}H_{\Psi})\subset R(T_{\Phi
}H_{\Psi})+R(G).$ Since $G$ is of finite rank, by taking closure in $H_{F}%
^{2}$, one see that
\[
S_{F}^{\ast}R(T_{\Phi}H_{\Psi})^{-}\subset R(T_{\Phi}H_{\Psi})^{-}+R(G),
\]
and $R(T_{\Phi}H_{\Psi})^{-}$ is $S_{F}^{\ast}$-almost invariant and
$\varsigma(R(T_{\Phi}H_{\Psi})^{-})\leq rank(G).$ This proves (i).

By taking adjoint, we have
\[
S_{E}^{\ast}H_{\Psi}^{\ast}T_{\Phi}^{\ast}=H_{\Psi}^{\ast}T_{\Phi}^{\ast}%
S_{F}-G^{\ast}.
\]
A similar argument shows that $R(H_{\Psi}^{\ast}T_{\Phi}^{\ast})^{-}$ is
$S_{E}^{\ast}$-almost invariant and $\varsigma(R(H_{\Psi}^{\ast}T_{\Phi}%
^{\ast})^{-})\leq rank(G^{\ast}).$ This proves (ii)

Since $R(H_{\Psi}^{\ast}T_{\Phi}^{\ast})^{-}=N(T_{\Phi}H_{\Psi})^{\perp},$
by the argument just above this lemma,
 $N(T_{\Phi}H_{\Psi})$ is $S_{E}$-almost invariant and
$$
\varsigma(S_{E},N(T_{\Phi}H_{\Psi}))=\varsigma(S_{E}^{\ast},R(H_{\Psi}^{\ast
}T_{\Phi}^{\ast})^{-})\leq\dim E_{1}.
$$
This proves (iii). The proof of (iv) is similar.
\end{proof}

\begin{remark}
The above lemma also holds when $E$ and $F$ are infinite dimensional.\
\end{remark}

It is natural to ask if $R(T_{\Phi}H_{\Psi})^{-}$ is $S_{E}$-almost invariant,
the answer is yes under a more restricted assumption that $E_{1},$ $E$ and $F$
are all finite dimensional. But the defect of $R(T_{\Phi}H_{\Psi})^{-}$ is
more complicated.

\begin{lemma}
\label{lem2}
Let $\Psi\in L_{B(E,E_{1})}^{\infty}$ and $\Phi\in L_{B(E_{1}%
,F)}^{\infty}.$ Then $R(H_{\Phi}T_{\Psi})^{-}$ is $S_{F}$-almost invariant
and
\[
\varsigma(S_{F},\text{ }R((H_{\Phi}T_{\Psi})^{-}))\leq\dim E_{1}+\dim E+\dim
F.
\]
Moreover, $N(H_{\Phi}T_{\Psi})$ is $S_{E}^{\ast}$-almost invariant with
$\varsigma(S_{E}^{\ast},N(H_{\Phi}T_{\Psi}))=\varsigma(S_{E},R(T_{\Psi}^{\ast
}H_{\Phi}^{\ast})^{-}).$ In the case $\Psi^{\ast}\in H_{B(E_{1},E)}^{\infty},$
we have $\varsigma(S_{F},$ $R((H_{\Phi}T_{\Psi})^{-}) \leq
\dim E_{1}+\dim F.$
\end{lemma}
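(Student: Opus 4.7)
The plan is to mirror the proof of Lemma \ref{lem1}, this time using the companion commutator identity (\ref{two}) from Lemma \ref{basic} in place of (\ref{one}). Writing $A=H_{\Phi}T_{\Psi}$, identity (\ref{two}) rearranges to
$$S_{F}A = A S_{E}^{\ast} + G,$$
where
$$G = H_{\Phi}S_{E_{1}}^{\ast}T_{\Psi}P_{E} - P_{F}H_{\Phi}S_{E_{1}}^{\ast}T_{\Psi} + S_{F}H_{\Phi}P_{E_{1}}T_{\Psi}.$$
The three summands factor through $P_{E}$, $P_{F}$, and $P_{E_{1}}$, respectively, so in the present setting (where $E$, $E_{1}$, $F$ are all finite dimensional) one has $\mathrm{rank}(G)\leq \dim E+\dim F+\dim E_{1}$. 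The argument of Lemma \ref{lem1}(i) then applies verbatim: $S_{F}R(A)\subset R(A)+R(G)$, and passing to closures (using that $R(G)$ is finite dimensional, so $R(A)^{-}+R(G)$ is closed) gives the first claim, with defect at most $\dim E_{1}+\dim E+\dim F$.

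For the assertion on $N(A)$, take adjoints in the displayed identity to obtain $S_{E}A^{\ast}-A^{\ast}S_{F}^{\ast}=-G^{\ast}$; the same reasoning shows that $R(T_{\Psi}^{\ast}H_{\Phi}^{\ast})^{-}=N(A)^{\perp}$ is $S_{E}$-almost invariant with defect at most $\mathrm{rank}(G^{\ast})=\mathrm{rank}(G)$. The equality $\varsigma(S_{E}^{\ast},N(A))=\varsigma(S_{E},R(T_{\Psi}^{\ast}H_{\Phi}^{\ast})^{-})$ is a general Hilbert-space duality: for any closed subspace $M$ and any $T\in B(H)$ one has $\varsigma(T,M)=\varsigma(T^{\ast},M^{\perp})$, since $P_{M^{\perp}}T P_{M}$ and its adjoint $P_{M}T^{\ast}P_{M^{\perp}}$ have the same rank. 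This is the device already at work behind parts (iii)--(iv) of Lemma \ref{lem1}.

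For the sharper bound when $\Psi^{\ast}\in H_{B(E_{1},E)}^{\infty}$, the extra input is that $T_{\Psi^{\ast}}$ is then an analytic Toeplitz operator and hence commutes with the appropriate forward shift; taking adjoints yields the intertwining $S_{E_{1}}^{\ast}T_{\Psi}=T_{\Psi}S_{E}^{\ast}$. Substituting into the first summand of $G$ gives
$$H_{\Phi}S_{E_{1}}^{\ast}T_{\Psi}P_{E}=H_{\Phi}T_{\Psi}S_{E}^{\ast}P_{E}=0,$$
since $S_{E}^{\ast}$ annihilates the constant functions $P_{E}H_{E}^{2}\cong E$. The remaining two summands still contribute ranks at most $\dim F$ and $\dim E_{1}$, yielding the improved bound $\dim E_{1}+\dim F$.

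The only real bookkeeping is in correctly attributing each of the three summands of $G$ to its rank bound and, for the last part, recognizing that the co-analyticity hypothesis on $\Psi$ is exactly what kills the $P_{E}$-summand. Once (\ref{two}) is in hand from Lemma \ref{basic}, the remainder of the argument is a direct transcription of Lemma \ref{lem1}(i)--(iv).
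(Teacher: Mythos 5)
Your proof is correct and follows essentially the same route as the paper's: you rearrange identity (\ref{two}) into $S_{F}H_{\Phi}T_{\Psi}=H_{\Phi}T_{\Psi}S_{E}^{\ast}+G$ with $\mathrm{rank}(G)\leq\dim E_{1}+\dim E+\dim F$, repeat the range-closure argument of Lemma \ref{lem1}, pass to $N(H_{\Phi}T_{\Psi})$ by adjoints and the defect duality, and obtain the sharper bound from $H_{\Phi}S_{E_{1}}^{\ast}T_{\Psi}P_{E}=H_{\Phi}T_{\Psi}S_{E}^{\ast}P_{E}=0$ exactly as the paper does when $\Psi^{\ast}\in H_{B(E_{1},E)}^{\infty}$. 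The only cosmetic difference is that you justify $\varsigma(T,M)=\varsigma(T^{\ast},M^{\perp})$ via $\mathrm{rank}\bigl(P_{M^{\perp}}TP_{M}\bigr)=\mathrm{rank}\bigl(P_{M}T^{\ast}P_{M^{\perp}}\bigr)$, which is an equivalent rephrasing of the paper's Lemma \ref{duality}(i).
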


\begin{proof}
It follows from Lemma \ref{basic} that $S_{F}H_{\Phi}T_{\Psi}=H_{\Phi}T_{\Psi
}S_{E}^{\ast}+G$, where $G$ is a finite rank operator with rank less than or
equal to $\dim E_{1}+\dim E+\dim F.$ The rest of the proof is similar to the
proof of Lemma \ref{lem1}. In the case $\Psi^{\ast}\in H_{B(E_{1},E)}^{\infty
},$ we note that $H_{\Phi}S_{E_{1}}^{\ast}T_{\Psi}P_{E}=H_{\Phi}T_{\Psi}%
S_{E}^{\ast}P_{E}=0,$ so $G$ is a finite rank operator with rank less than or
equal to $\dim E_{1}+\dim F.$
\end{proof}

\

To search for better representations of $S_{E}$-almost invariant subspaces as
ranges of operators, we make another observation.

\begin{lemma}
\label{lem3}Let $\Psi\in L_{B(E,E_{1})}^{\infty}$ and $\Phi\in L_{B(E_{1}%
,F)}^{\infty}.$ Then $R(T_{\Phi}T_{\Psi})^{-}$ is $S_{F}$-almost invariant and
$\varsigma(S_{F},$ $R(T_{\Phi}T_{\Psi})^{-})\leq\dim E_{1}+\dim F.$ In
particular, if $\Phi\in H_{B(E_{1},F)}^{\infty},$ then
$\varsigma(S_{F},$ $R(T_{\Phi}T_{\Psi})^{-})\leq\dim E_{1}.$
\end{lemma}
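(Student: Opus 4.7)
The plan is to follow the template of Lemmas \ref{lem1} and \ref{lem2}: establish a one-step commutator identity $S_F T_\Phi T_\Psi = T_\Phi T_\Psi S_E + G$ with $G$ of small finite rank, and then pass to closures of ranges exactly as in part (i) of Lemma \ref{lem1}. The novelty is that here the shift on the left is $S_F$ (forward) rather than $S_F^\ast$, so I cannot quote (\ref{one}) directly and must first derive the $S$-versions of the identities (\ref{ts}) and (\ref{hs}) that were used in Lemma \ref{basic}.

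First I would derive, for any symbol $\Phi\in L^\infty_{B(E_1,F)}$, the forward-shift analogue of (\ref{ts}). Starting from the Toeplitz characterization $S_F^\ast T_\Phi S_{E_1}=T_\Phi$, multiply on the left by $S_F$ and use $S_F S_F^\ast = I-P_F$ to obtain
\begin{equation*}
S_F T_\Phi \;=\; T_\Phi S_{E_1} \;-\; P_F T_\Phi S_{E_1}.
\end{equation*}
Applied to $\Psi$ in place of $\Phi$ (with spaces $E,E_1$), the same identity gives
\begin{equation*}
S_{E_1} T_\Psi \;=\; T_\Psi S_E \;-\; P_{E_1} T_\Psi S_E.
\end{equation*}

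Next, I would combine these two identities by the usual telescoping:
\begin{equation*}
S_F T_\Phi T_\Psi - T_\Phi T_\Psi S_E
= (S_F T_\Phi - T_\Phi S_{E_1})T_\Psi + T_\Phi(S_{E_1} T_\Psi - T_\Psi S_E)
= -P_F T_\Phi S_{E_1} T_\Psi - T_\Phi P_{E_1} T_\Psi S_E.
\end{equation*}
The first summand factors through $P_F$ and so has rank at most $\dim F$; the second factors through $P_{E_1}$ and so has rank at most $\dim E_1$. Hence the right-hand side is a finite rank operator $G$ with $\operatorname{rank}(G)\leq \dim E_1+\dim F$. The closure-of-range argument from the proof of Lemma \ref{lem1}(i) then yields $S_F R(T_\Phi T_\Psi)^-\subset R(T_\Phi T_\Psi)^- + R(G)$, establishing that $R(T_\Phi T_\Psi)^-$ is $S_F$-almost invariant with defect $\leq\dim E_1+\dim F$.

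For the ``in particular'' clause, when $\Phi\in H^\infty_{B(E_1,F)}$ is analytic, multiplication by $\Phi$ preserves $H^2$, and a direct check (or the Toeplitz intertwining relation) gives $S_F T_\Phi = T_\Phi S_{E_1}$. The first summand in the displayed expression for $G$ then vanishes, leaving only $-T_\Phi P_{E_1} T_\Psi S_E$, which has rank at most $\dim E_1$. This gives the sharper bound $\varsigma(S_F, R(T_\Phi T_\Psi)^-)\leq\dim E_1$. There is no serious obstacle: the only step that requires care is keeping track of which side the shifts act on when translating (\ref{ts}) into its $S_F$-version, since the roles of $S_F^\ast$ and $S_F$ (and hence of $I-S_FS_F^\ast=P_F$) get swapped relative to Lemma \ref{basic}.
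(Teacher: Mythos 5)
Your proposal is correct and follows essentially the same route as the paper: the identical commutator identity $S_F T_\Phi T_\Psi - T_\Phi T_\Psi S_E = -T_\Phi P_{E_1} T_\Psi S_E - P_F T_\Phi S_{E_1} T_\Psi$, the same rank bound $\dim E_1 + \dim F$, the closure-of-range argument from Lemma \ref{lem1}(i), and the same observation $P_F T_\Phi S_{E_1} T_\Psi = P_F S_F T_\Phi T_\Psi = 0$ for analytic $\Phi$. The only cosmetic difference is that you derive the forward-shift identities directly from $S_F^\ast T_\Phi S_{E_1} = T_\Phi$ by left-multiplying with $S_F$, whereas the paper obtains them by taking adjoints of (\ref{ts}); these are trivially equivalent.
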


\begin{proof}
By taking adjoint of (\ref{ts}), we have
\begin{align*}
S_{E_{1}}T_{\Psi}  &  =T_{\Psi}S_{E}-P_{E_{1}}T_{\Psi}S_{E},\\
S_{F}T_{\Phi}  &  =T_{\Phi}S_{E_{1}}-P_{F}T_{\Phi}S_{E_{1}}.
\end{align*}
Hence%
\begin{align*}
S_{F}T_{\Phi}T_{\Psi}  &  =\left(  T_{\Phi}S_{E_{1}}-P_{F}T_{\Phi}S_{E_{1}%
}\right)  T_{\Psi}\\
&  =T_{\Phi}\left(  T_{\Psi}S_{E}-P_{E_{1}}T_{\Psi}S_{E}\right)  -P_{F}%
T_{\Phi}S_{E_{1}}T_{\Psi}\\
&  =T_{\Phi}T_{\Psi}S_{E}-T_{\Phi}P_{E_{1}}T_{\Psi}S_{E}-P_{F}T_{\Phi}%
S_{E_{1}}T_{\Psi}.
\end{align*}
Since $rank(T_{\Phi}P_{E_{1}}T_{\Psi}S_{E}+P_{F}T_{\Phi}S_{E_{1}}T_{\Psi}%
)\leq\dim E_{1}+\dim F,$ by a proof similar to the proof
of Lemma \ref{lem1}, $R(T_{\Phi}T_{\Psi})^{-}$ is $S_{F}$-almost
invariant. In the case $\Phi\in H_{B(E_{1},F)}^{\infty},$ $P_{F}T_{\Phi
}S_{E_{1}}T_{\Psi}=P_{F}S_{F}T_{\Phi}T_{\Psi}=0.$
\end{proof}

\

It turns out the closure of the range of a finite sum of finite products of
Toeplitz and Hankel operators is $S_{E}$-almost invariant and $S_{E}^{\ast}%
$-almost invariant under an appropriate condition.

\begin{theorem}
\label{maina}Let $T=%
{\textstyle\sum\limits_{i=1}^{k}}
A_{i}\in B(H_{E}^{2},H_{F}^{2})$ with $A_{i}=%
{\textstyle\prod\limits_{j=1}^{m_{i}}}
C_{j,i},$ where each $C_{j,i}$ is either a (bounded) Hankel operator or a
(bounded) Toeplitz operator$.$
Then $R(T)^{-}$ is $S_{F}$-almost invariant and
$S_{F}^{\ast}$-almost invariant, and $N(T)$ is $S_{E}$-almost invariant and
$S_{E}^{\ast}$-almost invariant.
\end{theorem}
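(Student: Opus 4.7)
The plan is to extend the commutator-based analysis of Lemmas \ref{basic}--\ref{lem3} from small products to arbitrary finite products, and then to handle the finite sum. First, I would prove by induction on the number of factors $m$ that for any product $A = C_1 C_2 \cdots C_m$ of bounded Toeplitz and Hankel operators (with compatible intermediate dimensions) there exist shifts $X, X' \in \{S_E, S_E^{\ast}\}$ and finite rank operators $G, G'$ with
\[
S_F^{\ast} A = A X + G, \qquad S_F A = A X' + G'.
\]
The induction uses the four identities extracted from (\ref{ts}) and (\ref{hs}): Toeplitz factors preserve the direction of the outer shift modulo a finite-rank correction, while Hankel factors flip the direction (with $S_F^{\ast} H_\Psi = H_\Psi S_E$ exact and the companion identity for $S_F$ incurring a further finite-rank correction). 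The direction of $X$ (resp. $X'$) is dictated by the parity of the number of Hankel factors in $A$, and the rank of $G$ is controlled by the intermediate dimensions accumulating along the product.

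Applying this to each $A_i$ and summing gives $S_F^{\ast} T = \sum_i A_i X_i + G$ with $G$ of finite rank. If all $A_i$ share the same Hankel parity the $X_i$ agree, so $\sum A_i X_i = T X$ and $S_F^{\ast} T H_E^2 \subseteq R(T) + R(G)$, giving $S_F^{\ast}$-almost invariance of $R(T)^-$ by the closure argument used in the proof of Lemma \ref{lem1}. The principal obstacle is the mixed-parity case $T = T_+ + T_-$, where $T_+ = \sum_{X_i = S_E^{\ast}} A_i$ (even Hankel count) and $T_- = \sum_{X_i = S_E} A_i$ (odd count) are both nonzero. Then
\[
\sum_i A_i X_i = T_+ S_E^{\ast} + T_- S_E = T S_E^{\ast} + T_-(S_E - S_E^{\ast}),
\]
and the task reduces to showing that the image of $T_-(S_E - S_E^{\ast})$ lies in $R(T)^- + W$ for some finite-dimensional $W$. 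The plan here is to exploit the anti-commutation identity $A(S_E - S_E^{\ast}) = -(S_F - S_F^{\ast}) A + (\text{finite rank})$ valid for any product $A$ with an odd number of Hankel factors (derived by tracking the sign flips when $(S_E - S_E^{\ast})$ is pushed past each factor), yielding $T_-(S_E - S_E^{\ast}) = -(S_F - S_F^{\ast}) T_- + $ finite rank. Combining this with the parallel computation $S_F T = T S_E + (S_F - S_F^{\ast}) T_- + $ finite rank (obtained analogously by pushing $S_F$ through each $A_i$) allows the $T_-$-contributions to be matched up modulo a finite-dimensional space. This interplay between the forward and backward shift identities is the technical heart of the argument and is the step I expect to be the main obstacle.

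The $S_F$-almost invariance of $R(T)^-$ then follows by running the same strategy with the roles of $S_F$ and $S_F^{\ast}$ interchanged. Finally, since $T_\Phi^{\ast} = T_{\Phi^{\ast}}$ and $H_\Psi^{\ast} = H_{\widetilde{\Psi}}$, the adjoint $T^{\ast} = \sum_i A_i^{\ast}$ again lies in the same class of sums of products, so the range statements apply to $T^{\ast}$ as well. Combining with $N(T) = R(T^{\ast})^{-\perp}$ and the duality noted just before Lemma \ref{lem1} (that $M$ is $T$-almost invariant iff $M^\perp$ is $T^{\ast}$-almost invariant, with defect at most that of $M$), one concludes both $S_E$- and $S_E^{\ast}$-almost invariance of $N(T)$.
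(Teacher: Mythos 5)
Your uniform-parity argument is exactly the paper's proof: iterate (\ref{ts}) and (\ref{hs}) to obtain $S_F^{\ast}A_i = A_iX_i + G_i$ with $X_i\in\{S_E,S_E^{\ast}\}$ determined by the Hankel parity of $A_i$ and $G_i$ of finite rank, sum over $i$, and conclude by the closure argument of Lemma \ref{lem1}; then observe that $T^{\ast}=\sum_i A_i^{\ast}$ lies in the same class with the same parities, and use $N(T)=R(T^{\ast})^{\perp}$ together with Lemma \ref{duality}. Up to that point your proposal is correct and coincides with the paper.

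The genuine gap is the mixed-parity case, which you rightly single out as the technical heart but whose proposed resolution does not close. Writing $D:=T_-(S_E-S_E^{\ast})$, your two identities read $S_F^{\ast}T = TS_E^{\ast}+D+G_1$ and $S_FT = TS_E - D + G_2$ with $G_1,G_2$ of finite rank. Eliminating $D$ between them only controls $(S_F+S_F^{\ast})T$ modulo $R(T)$ and finite rank, which says nothing about $S_F T$ or $S_F^{\ast}T$ separately. To get $S_F^{\ast}$-almost invariance of $R(T)^{-}$ you need $R(D)\subseteq R(T)^{-}+W$ with $\dim W<\infty$, and solving for $D$ from the second identity gives only $R(D)\subseteq R(T)+S_FR(T)+W'$; that is, you need $S_F$-almost invariance to prove $S_F^{\ast}$-almost invariance and vice versa, so the "matching up" is circular. (Moreover, the equivalence of $S_F$- and $S_F^{\ast}$-almost invariance holds for \emph{every} closed subspace by Corollary \ref{equivalent}, so this symmetry between the two identities carries no new information.) Your anti-commutation identity $T_-(S_E-S_E^{\ast})=-(S_F-S_F^{\ast})T_-+(\text{finite rank})$ is correct, but it merely trades $R(D)$ for $R((S_F-S_F^{\ast})T_-)$, and nothing controls $R(T_-)$ relative to $R(T)$ modulo finite dimensions. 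This is not an oversight you can patch: the simplest mixed sum, $R(T_{\Psi}+H_{\Phi})^{-}$, is posed as an open Question immediately after Theorem \ref{maina}, so the paper itself does not know the answer in the mixed case. The paper's proof tacitly assumes that all the $A_i$ have the same Hankel parity ("if for each $i$, $A_i$ contains an even [resp.\ odd] number of Hankel operators"), and the correct repair of your proposal is to add that hypothesis rather than to attempt the mixed case.
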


\begin{proof}
By using (\ref{ts}) and (\ref{hs}) repeatedly, if for each $i,$ $A_{i}$
contains an even number of Hankel operators, then%
\[
S_{F}^{\ast}T=%
{\textstyle\sum\limits_{i=1}^{k}}
S_{F}^{\ast}A_{i}=%
{\textstyle\sum\limits_{i=1}^{k}}
\left(  A_{i}S_{E}^{\ast}+G_{i}\right)  =TS_{E}^{\ast}+G,
\]
where each $G_{i}$ is of finite rank and $G=%
{\textstyle\sum_{i=1}^{k}}
G_{i}$ is of finite rank. Hence $R(T)^{-}$ is $S_{F}^{\ast}$-almost invariant.
Similarly, $S_{F}T=TS_{E}+G$, where $G$ is of finite rank. Hence $R(T)^{-}$ is
$S_{F}$-almost invariant. Note that $T^{\ast}=%
{\textstyle\sum_{i=1}^{k}}
A_{i}^{\ast},$ where for each $i,$ $A_{i}^{\ast}$ also contains an even number
of Hankel operators. Therefore, $R(T^{\ast})^{-}$ is $S_{E}$-almost invariant
and $S_{E}^{\ast}$-almost invariant. By Lemma \ref{duality}, $N(T)=R(T^{\ast
})^{\perp}$ is $S_{E}$-almost invariant and $S_{E}^{\ast}$-almost invariant.

If for each $i,$ $A_{i}$ contains an odd number of Hankel operators, then
$S_{F}^{\ast}T=TS_{E}+G_{1}$ and $S_{F}T=TS_{E}^{\ast}+G_{2}$, where $G_{1}$
and $G_{2}$ are of finite rank.
The same argument as the above gives the result.
\end{proof}

\

In view of the above theorem, the following question is interesting.

\begin{question}
Let $\Psi\in L_{B(E,F)}^{\infty}$ and $\Phi\in L_{B(E,F)}^{\infty}.$ When is
$R(T_{\Psi}+H_{\Phi})^{-}$ $S_{F}$-almost invariant? \ When is $R(T_{\Psi
}+H_{\Phi})^{-}$ $S_{F}^{\ast}$-almost invariant?
\end{question}

The $C^{\ast}$-algebra generated by all Toeplitz operators is called the
Toeplitz algebra and the $C^{\ast}$-algebra generated by all
Toeplitz and Hankel operators is called the Toeplitz+Hankel algebra. The
operator $T$ in Theorem \ref{maina}
belongs to this algebra.

On $H^{2},$ since $S$ is irreducible, a nontrivial $S$-invariant subspace is
not $S^{\ast}$-invariant. It seems surprising that in the examples above
$S$-almost invariant subspace is also $S^{\ast}$-almost invariant, it is
interesting to ask if this is the case for other operators.


We include the following two lemmas in this section for future use. Recall
$T\in${$B(H).$}

\begin{lemma}
If $H_{1}$ is an almost invariant subspace of $T,$ then the minimal orthogonal
defect space of $H_{1}$ is unique.
\end{lemma}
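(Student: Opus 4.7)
The plan is to exhibit a canonical candidate for the minimal orthogonal defect space, namely $W_{0}:=\overline{P_{H_{1}^{\perp}}TH_{1}}$, and show that every minimal orthogonal defect space must equal $W_{0}$.

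First I would prove the following containment: if $W$ is any orthogonal defect space (finite-dimensional, $W\perp H_{1}$, and $TH_{1}\subset H_{1}+W$), then $W_{0}\subset W$. The argument is a one-line projection computation: for each $h\in H_{1}$, write $Th=h'+w$ with $h'\in H_{1}$ and $w\in W$; applying $P_{H_{1}^{\perp}}$ and using $W\perp H_{1}$ gives $P_{H_{1}^{\perp}}Th=w\in W$. Since $W$ is finite-dimensional hence closed, taking the closure of $P_{H_{1}^{\perp}}TH_{1}$ still lands in $W$.

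Next I would verify that $W_{0}$ is itself an orthogonal defect space. Orthogonality $W_{0}\perp H_{1}$ is immediate from the definition. Finite-dimensionality follows from the previous step applied to any single minimal orthogonal defect space $W$: $W_{0}\subset W$ forces $\dim W_{0}\leq \dim W=\varsigma(H_{1})$, and in particular $W_{0}$ is closed, so actually $W_{0}=P_{H_{1}^{\perp}}TH_{1}$. The inclusion $TH_{1}\subset H_{1}+W_{0}$ is clear from the decomposition $Th=P_{H_{1}}Th+P_{H_{1}^{\perp}}Th$. Hence $W_{0}$ is an orthogonal defect space, so by minimality of $\varsigma(H_{1})$ we get $\dim W_{0}\geq \varsigma(H_{1})$, and combined with the previous inequality $\dim W_{0}=\varsigma(H_{1})$.

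Finally, for any minimal orthogonal defect space $W$ we have $W_{0}\subset W$ with $\dim W_{0}=\dim W$, forcing $W=W_{0}$. Thus $W_{0}$ is the unique minimal orthogonal defect space. I do not expect a real obstacle here; the only point requiring a bit of care is to notice that one needs the finite-dimensionality of $W_{0}$ before one can conclude $W_{0}$ is closed and genuinely qualifies as a defect space, and this is obtained for free by first establishing $W_{0}\subset W$ for some (a priori existing) minimal orthogonal defect space $W$.
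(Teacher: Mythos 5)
Your proof is correct, and it takes a genuinely different (though closely related) route from the paper's. The paper argues by a symmetric comparison of two minimal orthogonal defect spaces $W_{1},W_{2}$: minimality of $W_{1}$ forces $W_{1}\subset TH_{1}+H_{1}$, hence $W_{1}\subset TH_{1}+H_{1}\subset H_{1}\oplus W_{2}$, and projecting away the $H_{1}$-component gives $W_{1}\subset W_{2}$; exchanging roles yields $W_{1}=W_{2}$. You instead build the canonical candidate $W_{0}=P_{H_{1}^{\perp}}TH_{1}=R\bigl((I-P_{H_{1}})TP_{H_{1}}\bigr)$ and show it is a least element among \emph{all} orthogonal defect spaces, then that it is itself an orthogonal defect space of minimal dimension, so every minimal orthogonal defect space coincides with it. Both arguments run on the same engine --- applying $P_{H_{1}^{\perp}}$ to the inclusion $TH_{1}\subset H_{1}+W$ --- and indeed the paper's step ``by minimality, $TH_{1}+H_{1}\supset W_{1}$'' is implicitly the assertion that $W_{1}$ equals your $W_{0}$; but your version proves strictly more: it gives an explicit formula for the minimal orthogonal defect space and shows it is contained in every orthogonal defect space, not merely that any two minimal ones agree. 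That explicit identification is precisely what the paper re-derives separately in Section 5, where $R((I-P_{M})TP_{M})$ is observed to be the minimal orthogonal defect space of a $T$-finite-rank-invariant subspace $M$, so your organization unifies the two occurrences. Your handling of the one delicate point is also sound: you correctly defer closedness of $W_{0}$ until finite-dimensionality is secured by comparison with an a priori existing minimal orthogonal defect space, which is legitimate since the uniqueness claim is vacuous if no such space exists (and existence in fact always holds, as one may replace any minimal defect space $W$ by $P_{H_{1}^{\perp}}W$ without increasing dimension).
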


\begin{proof}
Let $W_{1}$ and $W_{2}$ be two $n$-dimensional minimal orthogonal defect
spaces of $H_{1}.$ Then $TH_{1}\subset H_{1}\oplus W_{1}$ and
$TH_{1}\subset H_{1}\oplus W_{2}.$ By the minimality of $W_{1}$, $TH_{1}%
+H_{1}\supset W_{1}.$ Hence $H_{1}\oplus W_{2}\supset TH_{1}+H_{1}\supset
W_{1}.$ This implies $W_{2}\supset W_{1}.$ Similarly, $W_{1}\supset W_{2}.$ So
$W_{1}=W_{2}.$
\end{proof}

\

We here record the argument just
above Lemma \ref{lem1} as a lemma for future use.

\begin{lemma}
\label{duality}The following statements hold.

\begin{itemize}
\item[\textrm{(i)}] $H_{1}$ is $T$-almost invariant with defect $n$ if and
only if $H_{1}^{\perp}$ is $T^{\ast}$-almost invariant with defect $n.$

\item[\textrm{(ii)}] If $TH_{1}\subset H_{1}+W$ and $\varsigma(T,H_{1})=\dim
W$, then $T\left[  H_{1}+W\right]  \subset\left[  H_{1}+W\right]  +TW$ and
$\varsigma(T,H_{1}+W)=\dim(TW)-\dim(TW\cap(W+H_{1})).$
\end{itemize}
\end{lemma}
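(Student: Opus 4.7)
My plan is to treat parts (i) and (ii) separately. For (i), the starting point is to take $W$ to be a minimal orthogonal defect space of $H_{1}$ for $T$ (it exists because the minimum over all defect spaces equals the minimum over orthogonal ones via projection to $H_{1}^{\perp}$). Thus $W\perp H_{1}$, $\dim W=n$, and $TH_{1}\subset H_{1}\oplus W$. The key calculation is that for $x\in(H_{1}\oplus W)^{\perp}$ and $y\in H_{1}$,
\[
\langle T^{\ast}x,y\rangle=\langle x,Ty\rangle=0,
\]
because $Ty\in H_{1}\oplus W$. Hence $T^{\ast}[(H_{1}\oplus W)^{\perp}]\subset H_{1}^{\perp}$. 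Using the orthogonal decomposition $H_{1}^{\perp}=(H_{1}\oplus W)^{\perp}\oplus W$ (valid because $W\perp H_{1}$), I conclude
\[
T^{\ast}H_{1}^{\perp}\subset H_{1}^{\perp}+T^{\ast}W,
\]
so $\varsigma(T^{\ast},H_{1}^{\perp})\leq\dim W=n$. The reverse inequality $\varsigma(T,H_{1})\leq\varsigma(T^{\ast},H_{1}^{\perp})$ follows by applying the same argument with $(T,H_{1})$ replaced by $(T^{\ast},H_{1}^{\perp})$, using $(H_{1}^{\perp})^{\perp}=H_{1}$.

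For (ii), I directly compute $T[H_{1}+W]=TH_{1}+TW\subset(H_{1}+W)+TW$, which shows $TW$ is a defect space (not necessarily minimal). To pin down the minimal defect, I observe that $\varsigma(T,H_{1}+W)$ equals the codimension of $H_{1}+W$ inside $T(H_{1}+W)+(H_{1}+W)$, since any $V$ with $T(H_{1}+W)\subset(H_{1}+W)+V$ must surject onto the finite-dimensional quotient $[T(H_{1}+W)+(H_{1}+W)]/(H_{1}+W)$, and the minimum is realized by choosing $V$ to be any algebraic complement. Using $TH_{1}\subset H_{1}+W$ the numerator simplifies to $(H_{1}+W)+TW$, and the standard diamond isomorphism
\[
\frac{(H_{1}+W)+TW}{H_{1}+W}\;\cong\;\frac{TW}{TW\cap(H_{1}+W)}
\]
yields the stated formula $\varsigma(T,H_{1}+W)=\dim(TW)-\dim(TW\cap(W+H_{1}))$. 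Closedness of $H_{1}+W$ (needed to speak of almost invariance) comes for free since $W$ is finite dimensional.

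The main obstacle is in (i): one must insist on choosing $W$ orthogonal to $H_{1}$ at the outset, otherwise $H_{1}^{\perp}$ does not split as $(H_{1}\oplus W)^{\perp}\oplus W$ and the clean inclusion $T^{\ast}H_{1}^{\perp}\subset H_{1}^{\perp}+T^{\ast}W$ is not available. In (ii), the delicate point is arguing that the quotient dimension genuinely computes the minimal defect rather than merely bounding it; this is handled by the observation that \emph{any} allowable $V$ projects onto $[(H_{1}+W)+TW]/(H_{1}+W)$, so its dimension is at least that of this quotient, and equality is attained.
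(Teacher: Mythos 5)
Your proof is correct, and in part (i) it is essentially the paper's own argument: the paper likewise fixes the minimal orthogonal defect space $W$, uses the decomposition $H_{1}^{\perp}=(H_{1}\oplus W)^{\perp}\oplus W$ to obtain $T^{\ast}H_{1}^{\perp}\subset H_{1}^{\perp}+T^{\ast}W$, and then upgrades the inequality $\varsigma(T^{\ast},H_{1}^{\perp})\leq n$ to equality by applying it a second time to $(T^{\ast},H_{1}^{\perp})$ (the paper phrases this second application as a contradiction, you as symmetry; it is the same step). In part (ii) the substance also coincides --- both proofs identify $\varsigma(T,H_{1}+W)$ with the codimension of $H_{1}+W$ in $H_{1}+W+TW$, the paper via separate upper and lower bounds (writing $[H_{1}+W]+TW=[H_{1}+W]+[TW\ominus(TW\cap(W+H_{1}))]$ for the upper bound), you via the general identity $\varsigma(T,M)=\dim\bigl[(TM+M)/M\bigr]$ --- but your packaging buys a small improvement: it never uses the hypothesis $\varsigma(T,H_{1})=\dim W$. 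The paper's lower-bound step invokes minimality to deduce $W\subset TH_{1}+H_{1}$ and hence $H_{1}+T[H_{1}+W]\supset W+TW$, whereas in your argument the needed containment $[H_{1}+W]+G\supset H_{1}+W+TW$ is immediate from $TW\subset T[H_{1}+W]$ and $W\subset H_{1}+W$, so the defect formula in (ii) in fact holds for any finite-dimensional $W$ with $TH_{1}\subset H_{1}+W$, minimal or not. Your closing remarks correctly flag where the real work lies: orthogonality of $W$ is what makes the splitting $H_{1}^{\perp}=(H_{1}\oplus W)^{\perp}\oplus W$ available in (i), and in (ii) the surjection of any admissible $V$ onto the quotient $[(H_{1}+W)+TW]/(H_{1}+W)$ is exactly what pins down the minimum rather than merely bounding it.
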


\begin{proof}
We already argued that if $H_{1}$ is $T$-almost invariant with defect
$n,$ then $H_{1}^{\perp}$ is $T^{\ast}$-almost invariant with
defect less than or equal to $n.$ If $\varsigma(T^{\ast},H_{1}^{\perp})=k<n,$
then by applying what we just proved to $H_{1}^{\perp},$ we will get
$H_{1}=\left(  H_{1}^{\perp}\right)  ^{\perp}$ is $T$-almost invariant with
defect less than or equal to $k,$ which is a contradiction. This proves (i).

Now we prove (ii). Let $W$ be a minimal defect space of $H_{1}$.
Observe $T\left[  H_{1}+W\right]  =TH_{1}+TW\subset\left[  H_{1}+W\right]
+TW=\left[  H_{1}+W\right]  +\left[  TW\ominus(TW\cap(W+H_{1})\right]  .$ Thus
$\varsigma(T,H_{1}+W)\leq\dim(TW)-\dim(TW\cap(W+H_{1})).$ On the other hand,
assume $T\left[  H_{1}+W\right]  \subset\left[  H_{1}+W\right]  \oplus G$ for
some finite dimensional subspace $G.$ Since $W$ is a minimal defect space of
$H_{1},$ we know $TH_{1}+H_{1}\supset W.$ Thus $H_{1}+T\left[  H_{1}+W\right]
\supset W+TW$ and
\[
\left[  H_{1}+W\right]  \oplus G\supset H_{1}+T\left[  H_{1}+W\right]  \supset
H_{1}+W+TW.
\]
Hence $\dim G\geq\dim(TW)-\dim(TW\cap(W+H_{1})).$ That is, $\varsigma
(T,H_{1}+W)\geq\dim(TW)-\dim(TW\cap(W+H_{1})).$
\end{proof}

\

It is interesting to note that by iteration,
\[
\varsigma(T,H_{1})\geq\varsigma(T,H_{1}+W)\geq\varsigma(T,H_{1}+W+TW)\geq
\cdots\geq\varsigma(T,H_{1}+W+TW+\cdots+T^{k}W).
\]
So it is possible for $H_{1}+W+TW+\cdots+T^{k}W$ to become an invariant
subspace of $T.$

\section{Representations of $S$-almost invariant subspaces.}

The closely related nearly $S^{\ast}$-invariant subspaces actually predates
$S^{\ast}$-almost invariant subspaces.

\begin{definition}
A closed subspace $M$ of $H_{E}^{2}$ is said to be nearly $S_{E}^{\ast}%
$-invariant if $h\in M$ and $h(0)=0$ implies that $S_{E}^{\ast}h\in M.$
\end{definition}

Nearly $S^{\ast}$-invariant subspaces of $H^{2}$ are useful in describing
kernels of Toeplitz operators and finding inverses of Toeplitz operators
\cite{Fricain} \cite{Chevrot}. The following characterization (due to Hitt
\cite{Hitt} and Sarason \cite{Sarason}) will be useful for us, see Theorem
30.15 in \cite{Fricain}.
For $x,y\in H^\infty$, an admissible pair $(x,y)$ of an outer
function $G$ is a Pythagorean pair $(x,y)$ such that $\left\vert
x\right\vert ^{2}+\left\vert y\right\vert ^{2}=1$ on $\mathbb{T}$, $x$ is
outer and $G=x/(1-y).$

\begin{theorem}
\textrm{\cite{Hitt} \cite{Sarason}} \label{nearly}Let $M$ be a nearly
$S^{\ast}$-invariant subspace of $H^{2}.$ Then $M$ has one of the following forms:

\begin{itemize}
\item[\textrm{(i)}] There exists an inner function $\theta$ such that
$\theta(0)\neq0$ and $M=\theta H^{2}.$

\item[\textrm{(ii)}] There exists a function $g$ of unit norm in
$M$ such that $g(0)>0$ and an inner function $\theta$ such that
$\theta(0)=0$ and $\theta$ divides the function $y,$ where $(x,y)$%
is admissible pair for the outer factor $G$ of $g,$ such that $M=T_{g}%
K_{\theta},$ where $K_{\theta}=H^{2}\ominus\theta H^{2}$.
\end{itemize}
\end{theorem}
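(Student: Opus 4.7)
The plan is to follow the Hitt--Sarason template: extract a distinguished unit vector $g$ spanning the ``top layer'' of $M$, divide $M$ by $g$ to land inside $H^{2}$, and recognize the quotient as an $S^{\ast}$-invariant subspace via Beurling's theorem.

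First I would analyze the codimension of $M_{0}:=M\cap zH^{2}$ in $M$. By definition of near $S^{\ast}$-invariance, $S^{\ast}M_{0}\subset M$, so if $M_{0}=M$ then $M$ is itself $S^{\ast}$-invariant, and Beurling forces $M=K_{\theta}$ for some inner $\theta$. But $M\subset zH^{2}$ means $1\perp M$, while the reproducing kernel $k_{0}(z)=1-\overline{\theta(0)}\theta(z)$ lies in $K_{\theta}$ with $k_{0}(0)=1-|\theta(0)|^{2}\neq 0$ unless $\theta$ is constant; this forces $M=\{0\}$, a trivial case. Hence, for $M$ nontrivial, $M_{0}$ has codimension one in $M$ and there is a unique unit vector $g\in M\ominus M_{0}$ with $g(0)>0$.

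The crux is to show that multiplication by $g$ is an isometric embedding of an $S^{\ast}$-invariant subspace $N\subset H^{2}$ onto $M$. The naive candidate $N=\{h/g:h\in M\}$ fails when $g$ has a nontrivial inner factor, so I would write $g=iG$ with $i$ inner and $G$ outer and invoke an admissible pair $(x,y)$ for $G$: since $|x|^{2}+|y|^{2}=1$ on $\mathbb{T}$ and $G=x/(1-y)$, the modulus of $g$ is controlled by that of $x$ in a balanced way. Iterating near $S^{\ast}$-invariance on any $h\in M$ yields a formal expansion $h=g\sum a_{n}z^{n}$; the admissible-pair bookkeeping is then what converts this into $H^{2}$-convergence and makes the correspondence $h\leftrightarrow \sum a_{n}z^{n}$ norm-preserving.

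With $N$ in hand, I would verify $S^{\ast}$-invariance by decomposing each $h=gk \in M$ as $h=h(0) g/g(0)+h_{1}$ with $h_{1}\in M_{0}$; near invariance gives $S^{\ast}h_{1}\in M$, and unpacking through the isometry gives $S^{\ast}k\in N$. Beurling then forces $N=H^{2}$ or $N=K_{\theta}$ for some inner $\theta$. The first alternative delivers form (i) (the role of $\theta$ being played by the inner factor of $g$, with $\theta(0)\neq 0$ enforced by $g\notin zH^{2}$); in the second, demanding $gk\in H^{2}$ for every $k\in K_{\theta}$ translates, via $G=x/(1-y)$, into the twin conditions $\theta(0)=0$ and $\theta\mid y$. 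The main obstacle is the isometric division step: when $g$ has nontrivial singular or Blaschke inner factors, the pointwise quotient $h/g$ is only meromorphic, and proving that the right substitute lies in $H^{2}$ with matching norm is precisely where the admissible pair absorbs the denominators compatibly on $\mathbb{T}$.
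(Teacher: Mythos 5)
You should first note that the paper itself does not prove Theorem \ref{nearly}: it is quoted from Hitt and Sarason (the paper points to Theorem 30.15 in Fricain--Mashreghi), so the benchmark is the classical argument, whose template you follow correctly in outline. Your opening reduction is sound: if every function in $M$ vanishes at $0$ then $M$ is $S^{\ast}$-invariant and trivial, hence $\dim\left(M\ominus(M\cap zH^{2})\right)=1$ and the extremal $g$ exists; and your verification that $N$ inherits $S^{\ast}$-invariance from near invariance (via $h_{1}=g(k-k(0))$, $S^{\ast}h_{1}=gS^{\ast}k$) is also correct.

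The genuine gap is at the central step, and your proposed repair misdiagnoses it. Your premise that the naive candidate $N=\{h/g:h\in M\}$ ``fails when $g$ has a nontrivial inner factor'' is false: the content of Hitt's theorem is precisely that this candidate works verbatim. The iteration $h=c_{0}g+zh_{1}$ with $h_{1}\in M$ (using $g\perp M\cap zH^{2}$) gives $k=\sum c_{n}z^{n}\in H^{2}$ with $\sum|c_{n}|^{2}\leq\Vert h\Vert^{2}$, and $h=gk$ pointwise on $\mathbb{D}$ since $|z^{n+1}h_{n+1}(z)|\leq|z|^{n+1}\Vert h\Vert(1-|z|^{2})^{-1/2}\rightarrow0$; thus every $h\in M$ is divisible in $H^{2}$ by all of $g$, inner factor included, with no admissible pair needed. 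What this easy part yields is only contractivity $\Vert k\Vert\leq\Vert h\Vert$. The hard point --- which your sketch delegates entirely to ``admissible-pair bookkeeping'' --- is the reverse inequality, i.e., that no norm escapes in the tail ($\Vert h_{n+1}\Vert\rightarrow0$), equivalently that $h\mapsto h/g$ is isometric, which is what makes $N$ closed so that Beurling can be applied at all. The admissible pair cannot supply this: since $|g|=|G|$ a.e.\ on $\mathbb{T}$, the pair $(x,y)$ only calibrates the modulus of $g$, and Sarason's criterion that $T_{g}$ is isometric on $K_{\theta}$ if and only if $\theta$ divides $y$ (Theorem 30.14 in Fricain--Mashreghi, which the paper quotes right after the statement) presupposes that you already know $N=K_{\theta}$; invoking it to establish the isometry that legitimizes Beurling is circular. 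Hitt's original argument (and Sarason's later proof via de Branges--Rovnyak spaces) exist precisely to close this step, and your proposal contains no substitute for them. Two smaller slips: in case (ii), $\theta(0)=0$ is not extracted from the demand $gk\in H^{2}$ but from $g\in M=gK_{\theta}$, which forces $1=g/g\in K_{\theta}$, i.e., $\theta(0)=0$; and in case (i) you must conclude that $g$ itself is inner (isometric multiplication by $g$ on all of $H^{2}$ forces $|g|=1$ a.e.), rather than pass to ``the inner factor of $g$,'' which would give the wrong subspace if the outer factor were nonconstant.
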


The requirement that $\theta$ divides the function $y$ is equivalent to
$T_{g}$ acts isometrically on $K_{\theta}$ so that indeed $T_{g}K_{\theta}$ is
a closed subspace of $H^{2},$ see Theorem 30.14 in \cite{Fricain}.

The case (ii) should contain the $S^{\ast}$-invariant subspace $K_{\phi}.$
Indeed, in this case if
\begin{equation}
g=(1-\left\vert \phi(0)\right\vert ^{2})^{-1/2}(1-\overline{\phi(0)}%
\phi)\ \ \text{and}\ \ \theta=\frac{\phi(0)-\phi}{1-\overline
{\phi(0)}\phi}, \label{gformula}%
\end{equation}
then $T_{g}K_{\theta}=K_{\phi}$ (cf. \cite{Sarason2}).

The vector-valued version of nearly $S^{\ast}$-invariant subspaces of
$H_{E}^{2}$, where $E$ is of finite dimension was given by
Chalendar-Chevrot-Partington \cite{ChalendarCP}. Recently the concept of
nearly $S^{\ast}$-invariant subspace with finite defect was introduced, and a
description of nearly $S^{\ast}$-invariant subspace with finite defect inside
$H^{2}$ was obtained in Chalendar-Gallardo-Partington \cite{ChalendarGP}. The
vector-valued version of nearly $S^{\ast}$-invariant subspace with finite
defect inside $H_{E}^{2}$ was described first in Chattopadhyay-Das-Pradhan
\cite{ChDas}, then independently in O'Loughlin \cite{OLoughlin}.

\begin{definition}
A closed subspace $M$ of $H_{E}^{2}$ is said to be nearly $S_{E}^{\ast}%
$-invariant with defect $p$ if there is a $p$-dimensional subspace $G$ (called a defect space of $M$ that may be taken to
be orthogonal to $M)$ such that if $h\in M$ and $h(0)=0$, then
$S_{E}^{\ast}h\in M+G.$ The smallest possible $p$ is said to be the defect of
$M,$ denoted by $\eta(S_{E}^{\ast},M)=p.$
\end{definition}

It is clear that if $M$ is $S_{E}^{\ast}$-almost invariant with $\varsigma
(S_{E}^{\ast},M)=p,$ then $M$ is nearly $S_{E}^{\ast}$-invariant with
$\eta(S_{E}^{\ast},M)\leq p.$ But if $M$ is $S_{E}^{\ast}$-almost
invariant with $\varsigma(S_{E}^{\ast},M)=p$, it is possible that $M$ is not
nearly $S_{E}^{\ast}$-invariant, see for example Proposition 2.6
in \cite{ChDas}. We will see below when a $S_{E}^{\ast}$-almost
invariant subspace is nearly $S^{\ast}$-invariant. It has been shown in
Proposition 2.2 in \cite{ChalendarGP} and Proposition 2.2
in \cite{ChDas} that a nearly $S_{E}^{\ast}$-invariant subspace
is $S_{E}^{\ast}$-almost invariant.

Next we generalize this result to a nearly $S_{E}^{\ast}$-invariant subspace
with finite defect.

\begin{proposition}
\label{nearlyalmost} If a closed subspace $M$ of $H_{E}^{2}$ is
nearly $S_{E}^{\ast}$-invariant with $\eta(S_{E}^{\ast},M)=p,$ then $M$ is
$S_{E}^{\ast}$-almost invariant with $\varsigma(S_{E}^{\ast},M)\leq p+\dim E.$
\end{proposition}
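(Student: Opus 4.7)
The plan is to reduce the almost invariance of $M$ under $S_E^{\ast}$ to the nearly invariant condition, which only controls vectors vanishing at $0$, by peeling off a finite-dimensional complement corresponding to evaluation at $0$. Concretely, I will split $M$ as $M = M_0 \oplus N$, where $M_0$ consists of the functions in $M$ that vanish at the origin and $N$ is its orthogonal complement inside $M$; the finite-dimensionality of $N$ (bounded by $\dim E$) is what produces the extra $\dim E$ in the defect estimate.

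First, I would set $M_0 := \{h \in M : h(0) = 0\}$, which is closed in $M$ because the evaluation map $\mathrm{ev}_0 \colon M \to E$, $h \mapsto h(0)$, is bounded linear (evaluation at a point of $\mathbb{D}$ is bounded on $H_E^2$). Let $N := M \ominus M_0$. Since $\mathrm{ev}_0$ factors through the quotient $M/M_0 \cong N$ injectively into the finite-dimensional space $E$, we get $\dim N \leq \dim E$.

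Second, I would use the decomposition $h = h_0 + n$ with $h_0 \in M_0$ and $n \in N$ for an arbitrary $h \in M$. Applying the nearly $S_E^{\ast}$-invariant hypothesis with defect space $G$ (taken orthogonal to $M$, $\dim G = p$) to $h_0$, we obtain $S_E^{\ast}h_0 \in M + G$. Therefore
\[
S_E^{\ast} h \;=\; S_E^{\ast}h_0 + S_E^{\ast} n \;\in\; M + G + S_E^{\ast}(N),
\]
so
\[
S_E^{\ast} M \;\subseteq\; M + \bigl(G + S_E^{\ast}(N)\bigr).
\]
The defect space $G + S_E^{\ast}(N)$ has dimension at most $p + \dim N \leq p + \dim E$, yielding $\varsigma(S_E^{\ast}, M) \leq p + \dim E$.

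There is no real obstacle here — the argument is a clean bookkeeping step. The only point that requires a moment of care is verifying that $N$ has dimension at most $\dim E$; this is just the rank-nullity fact applied to the bounded linear map $\mathrm{ev}_0|_M$, whose range lies in the finite-dimensional space $E$ (here the assumption that $E$ is finite-dimensional, standing throughout the paper, is essential). One could equivalently phrase the decomposition via the projection $P_N$ onto $N$ and note that $h - P_N h \in M_0$, giving the same bound.
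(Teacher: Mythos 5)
Your proof is correct and follows essentially the same route as the paper: both decompose $M = M_0 \oplus N$ with $M_0 = M \cap zH_E^2$, apply the nearly invariant hypothesis to $M_0$, and absorb $S_E^{\ast}(N)$ into the defect space. The only cosmetic difference is that you prove the bound $\dim N \leq \dim E$ directly via the evaluation map at $0$, whereas the paper cites this fact as a lemma from the Chalendar--Chevrot--Partington paper.
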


\begin{proof}
Assume $M$ is nearly $S_{E}^{\ast}$-invariant with defect $p.$ That is, there
is a $p$-dimensional subspace $W$ such that if $h\in M$ and $h(0)=0,$ then
$S_{E}^{\ast}h\in M+W.$ Set $M_{1}=\left\{  h\in M:h(0)=0\right\}  =M\cap
zH_{E}^{2}.$ Thus $S_{E}^{\ast}M_{1}\in M+W.$ Set
\[
W_{1}:=M\ominus M_{1}=M\ominus M\cap zH_{E}^{2}.
\]
By a lemma from \cite{ChalendarCP}, $\dim W_{1}\leq\dim E.$ Then%
\[
S_{E}^{\ast}M=S_{E}^{\ast}\left(  M_{1}\oplus W_{1}\right)  \subset
S_{E}^{\ast}M_{1}+S_{E}^{\ast}W_{1}\subset M+W+S_{E}^{\ast}W_{1}.
\]
Hence $\varsigma(S_{E}^{\ast},M)\leq\dim(W+S_{E}^{\ast}W_{1})\leq p+\dim E.$
\end{proof}

\

So the set of nearly $S_{E}^{\ast}$-invariant subspaces with finite defect is
the same as the set of $S_{E}^{\ast}$-almost invariant subspaces.

\begin{theorem}
\textrm{\label{nearlym}\cite{ChalendarCP} \cite{ChalendarGP} \cite{ChDas}
\cite{OLoughlin}} Let $M$ be a closed subspace of $H_{\mathbb{C}^{m}}^{2}$
that is nearly $S^{\ast}$-invariant with defect $p.$ Then:

\begin{itemize}
\item[\textrm{(i)}] In the case where there are functions in $M$ that do not
vanish at $0,$%
\begin{equation}
M=\{G:G(z)=G_{0}(z)k_{0}(z)+z%
{\textstyle\sum\limits_{i=1}^{p}}
g_{i}(z)k_{i}(z):(k_{0},\cdots,k_{p})\in K\}, \label{mrep}%
\end{equation}
where $G_{0}$ is the matrix of size $m\times r$ whose columns consist of any
orthonormal basis of $M\ominus\left(  M\cap zH_{\mathbb{C}^{m}}^{2}\right)  ,$
$\{g_{1},\cdots,g_{p}\}$ is any orthonormal basis of the defect space $W,$ and
$K\subset H_{\mathbb{C}^{r+p}}^{2}$ is a $S^{\ast}$-invariant subspace.
Furthermore, $\left\Vert G\right\Vert ^{2}=%
{\textstyle\sum_{i=0}^{p}}
\left\Vert k_{i}\right\Vert ^{2}.$

\item[\textrm{(ii)}] In the case all functions in $M$ vanish at $0,$%
\[
M=\{G:G(z)=z%
{\textstyle\sum\limits_{i=1}^{p}}
g_{i}(z)k_{i}(z):(k_{0},\cdots,k_{p})\in K\},
\]
with the same notation as in (i) except that $K\subset H_{\mathbb{C}^{p}}^{2}$
is a $S^{\ast}$-invariant subspace. Furthermore, $\left\Vert G\right\Vert
^{2}=%
{\textstyle\sum_{i=1}^{p}}
\left\Vert k_{i}\right\Vert ^{2}.$
\end{itemize}

Conversely if a closed subspace $M$ of $H_{\mathbb{C}^{m}}^{2}$ has a
representation as in \textrm{(i)} or \textrm{(ii)}, then it is nearly
$S^{\ast}$-invariant with defect $p.$
\end{theorem}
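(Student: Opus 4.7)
I would adapt the classical Hitt--Sarason peeling argument \cite{Hitt} \cite{Sarason} to the vector-valued setting with finite defect. Set $M_{1}:=M\cap zH_{\mathbb{C}^{m}}^{2}$, $M_{0}:=M\ominus M_{1}$, and $r:=\dim M_{0}$; by the lemma of \cite{ChalendarCP} used in the proof of Proposition \ref{nearlyalmost}, one has $r\leq m$. Let $G_{0}$ be the $m\times r$ matrix whose columns form an orthonormal basis of $M_{0}$, so that $a\mapsto G_{0}a$ is an isometry from $\mathbb{C}^{r}$ onto $M_{0}$. Let $W$ be the defect space with orthonormal basis $g_{1},\ldots,g_{p}$ chosen orthogonal to $M$.

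For the forward direction in case (i), the first round of peeling writes any $G\in M$ uniquely as
\begin{equation*}
G=G_{0}a_{0}+zH_{0},\qquad a_{0}\in\mathbb{C}^{r},\ \ H_{0}\in H_{\mathbb{C}^{m}}^{2},
\end{equation*}
which is the orthogonal decomposition $G=P_{M_{0}}G+P_{M_{1}}G$. Since $zH_{0}\in M\cap zH_{\mathbb{C}^{m}}^{2}$, the function $H_{0}=S^{\ast}(G-G_{0}a_{0})$ lies in $M+W$ by near $S^{\ast}$-invariance with defect $p$, and because $W\perp M$ this splits orthogonally as $H_{0}=\tilde{G}_{1}+\sum_{i=1}^{p}b_{i}^{(1)}g_{i}$ with $\tilde{G}_{1}\in M$. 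Iterating the same procedure on $\tilde{G}_{1}$ produces sequences $(a_{n})\subset\mathbb{C}^{r}$, $(b_{i}^{(n)})\subset\mathbb{C}$, and remainders $\tilde{G}_{N}\in M$ with
\begin{equation*}
G(z)=G_{0}(z)\sum_{n=0}^{N-1}a_{n}z^{n}+z\sum_{i=1}^{p}g_{i}(z)\sum_{n=1}^{N}b_{i}^{(n)}z^{n-1}+z^{N}\tilde{G}_{N}(z).
\end{equation*}
Orthogonality at every step yields the Pythagorean identity $\|G\|^{2}=\sum_{n<N}|a_{n}|^{2}+\sum_{n\leq N,\ i}|b_{i}^{(n)}|^{2}+\|\tilde{G}_{N}\|^{2}$, so the partial sums are bounded by $\|G\|^{2}$; defining $k_{0}(z):=\sum_{n\geq 0}a_{n}z^{n}\in H_{\mathbb{C}^{r}}^{2}$ and $k_{i}(z):=\sum_{n\geq 1}b_{i}^{(n)}z^{n-1}\in H^{2}$, the weak convergence $z^{N}\tilde{G}_{N}\rightharpoonup 0$ yields the representation (\ref{mrep}) together with the norm identity $\|G\|^{2}=\sum_{i=0}^{p}\|k_{i}\|^{2}$.

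Next I would define $K:=\{(k_{0},k_{1},\ldots,k_{p}):G\in M\}\subseteq H_{\mathbb{C}^{r+p}}^{2}$ and verify $K$ is a closed $S^{\ast}$-invariant subspace. Closedness follows from the Pythagorean identity, and $S^{\ast}$-invariance is the crucial structural point: passing from $G$ to $\tilde{G}_{1}\in M$ amounts to stripping the constant terms $a_{0}$ and $b_{i}^{(1)}$ from the coefficient tuple, that is, to applying $S^{\ast}$ coordinate-wise. Case (ii) is handled in parallel with $r=0$ and the $G_{0}$-term omitted. The converse direction, that a subspace of the stated form is nearly $S^{\ast}$-invariant with defect $p$, reduces to a direct computation: if $G$ with tuple $(k_{0},\ldots,k_{p})\in K$ vanishes at $0$, then $G_{0}(0)k_{0}(0)=0$ and applying $S^{\ast}$ produces a new tuple in $K$ up to a residue lying in $W$.

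The main obstacle is establishing the orthogonality at every stage of the peeling, as this is what gives the Pythagorean identity and in turn the convergence of the series for $k_{0}$ and the $k_{i}$. The orthogonality $G_{0}a_{n}\perp zH_{n}$ is built into the definition of $M_{0}$ and $M_{1}$, while the orthogonality of $\tilde{G}_{n+1}$ to $\sum b_{i}^{(n+1)}g_{i}$ relies entirely on the choice of $W$ orthogonal to $M$. Once this orthogonality is secured, the identification and $S^{\ast}$-invariance of $K$ and the converse implication are comparatively routine.
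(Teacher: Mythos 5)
The paper itself does not prove Theorem \ref{nearlym}: it is quoted from \cite{ChalendarCP} \cite{ChalendarGP} \cite{ChDas} \cite{OLoughlin}, and the only indication given is the remark immediately after the statement, that the proof ``generalizes Hitt's algorithm as in proving Theorem \ref{nearly} and uses a lemma of \cite{Benhida} about $C_{\cdot0}$ contractions.'' Your peeling scheme is the correct skeleton and matches that route: the orthogonal split $M=(M\ominus(M\cap zH^2_{\mathbb{C}^m}))\oplus(M\cap zH^2_{\mathbb{C}^m})$, the use of near invariance with $W\perp M$ to write $H_0=\tilde G_1+\sum_i b_i^{(1)}g_i$, the exact Pythagorean identity at each stage, the coordinatewise-$S^{\ast}$ description of $K$, and the converse computation (where you correctly need, and get, that $G_0(0)$ has full column rank, since evaluation at $0$ is injective on $M\ominus(M\cap zH^2_{\mathbb{C}^m})$) are all sound.

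The genuine gap is your claim that the weak convergence $z^N\tilde G_N\rightharpoonup 0$ yields the norm identity $\Vert G\Vert^2=\sum_{i=0}^{p}\Vert k_i\Vert^2$. It does not. Weak convergence gives only the pointwise representation $G=G_0k_0+z\sum_{i=1}^{p}g_ik_i$; from your own telescoped identity,
\[
\Vert\tilde G_N\Vert^2=\Vert G\Vert^2-\sum_{n<N}|a_n|^2-\sum_{n\leq N,\,i}|b_i^{(n)}|^2,
\]
the right-hand side decreases to $\Vert G\Vert^2-\sum_{i=0}^{p}\Vert k_i\Vert^2\geq 0$, and nothing in the weak convergence forces this limit to vanish. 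Equality is equivalent to the strong convergence $\Vert\tilde G_N\Vert\to 0$, i.e., to the contraction $R:G\mapsto\tilde G_1$ on $M$ having powers tending strongly to zero; this is exactly the hard point (already in the scalar, defect-free case it is the substance of Hitt's isometry theorem), and it is precisely where the cited proofs invoke the Benhida--Timotin lemma on finite rank perturbations of contractions, since $R$ is, up to a rank-$\leq(r+p)$ perturbation, a compression of the backward shift. Your proposal supplies no argument for it. Moreover the gap propagates: without the norm identity the tuple map $G\mapsto(k_0,\dots,k_p)$ is merely a contraction rather than an isometry, so your assertion that ``closedness \lbrack of $K$\rbrack\ follows from the Pythagorean identity'' fails (the range of a contraction need not be closed), and the norm identity is also exactly what the present paper converts into the partial-isometry condition (\ref{product}) in the proof of Theorem \ref{main}, so it cannot be weakened to the inequality $\sum_{i=0}^{p}\Vert k_i\Vert^2\leq\Vert G\Vert^2$ that your argument actually delivers.
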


The proof of the above theorem generalizes Hitt's algorithm as in proving
Theorem \ref{nearly} and uses a lemma of \cite{Benhida} about $C_{\cdot0}$ contractions.

\begin{corollary}
\label{almostm} \textrm{\cite{ChalendarGP} \cite{ChDas} \cite{OLoughlin}} A
closed subspace $M$ of $H_{\mathbb{C}^{m}}^{2}$ is $S^{\ast}$-almost invariant
with defect $p$ if and only if it satisfies the conditions of the above
theorem together with an extra condition that the column space of $S^{\ast
}G_{0}$ is contained in $M+W$ in case \textrm{(i)}, while case \textrm{(ii)}
is unchanged.
\end{corollary}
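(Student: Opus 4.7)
The plan is to derive the corollary directly from Theorem \ref{nearlym} by isolating precisely the single additional obstruction that separates $S^*$-almost invariance from near $S^*$-invariance with finite defect. The key observation, already implicit in the proof of Proposition \ref{nearlyalmost}, is the orthogonal decomposition $M = M_1 \oplus W_1$, where $M_1 = M \cap zH_E^2$ and $W_1 = M \ominus M_1$. Near invariance controls $S^*$ on the subspace $M_1$, while almost invariance additionally demands control on the complementary piece $W_1$, whose columns are spanned by $G_0$. Thus the extra condition on $S^* G_0$ should be exactly what pins down almost invariance.

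For the forward direction, I would start with $M$ being $S^*$-almost invariant with defect $p$, and take $W$ to be its (unique) minimal orthogonal defect space, so $\dim W = p$ and $S^* M \subset M \oplus W$. Restricting to $h \in M_1$ gives $S^* h \in M \oplus W \subset M + W$, which shows $M$ is nearly $S^*$-invariant with defect space $W$, so Theorem \ref{nearlym} applies and yields either representation (i) or (ii) with this $W$. For case (i), note that the columns of $G_0$ form an orthonormal basis of $W_1 \subset M$, so $S^* G_0$ has its columns in $S^* M \subset M + W$, which is precisely the stated extra condition. In case (ii) one has $M \subset zH_E^2$, so $M_1 = M$, $W_1 = \{0\}$, and there is no $G_0$ to constrain, explaining why case (ii) is unchanged.

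For the reverse direction, I would assume the representation of Theorem \ref{nearlym} holds with a defect space $W$ of dimension $p$ together with the extra condition in case (i). Near invariance gives $S^* M_1 \subset M + W$, while the extra condition gives $S^* W_1 \subset M + W$ since the columns of $G_0$ span $W_1$. Summing the two decomposed pieces yields
\[
S^* M = S^*(M_1 \oplus W_1) \subset S^* M_1 + S^* W_1 \subset M + W,
\]
so $M$ is $S^*$-almost invariant with defect at most $p$. Case (ii) is already covered by near invariance alone, since $M_1 = M$ forces $S^* M \subset M + W$ immediately.

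The only delicate point, and the step I expect to need the most care, is tracking the two different notions of defect: the near-invariance defect (from Theorem \ref{nearlym}) and the almost-invariance defect. In the forward direction these agree because the minimal orthogonal defect space $W$ for almost invariance simultaneously serves as a defect space for near invariance; in the reverse direction the extra condition prevents the defect from growing past $\dim W$ (compare Proposition \ref{nearlyalmost}, where in the absence of the extra condition the bound $p + \dim E$ arises precisely from the uncontrolled piece $S^* W_1$). Everything else is straightforward bookkeeping on the decomposition $M = M_1 \oplus W_1$.
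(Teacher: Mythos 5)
Your argument is correct, but note first that the paper contains no proof of this corollary to compare against: it is quoted verbatim from \cite{ChalendarGP}, \cite{ChDas}, \cite{OLoughlin}. What you have supplied is a self-contained derivation from Theorem \ref{nearlym}, and it is the natural one. Your decomposition $M=M_{1}\oplus W_{1}$ with $M_{1}=M\cap zH_{\mathbb{C}^{m}}^{2}$ and $W_{1}=M\ominus M_{1}$ is exactly the device the paper uses in Proposition \ref{nearlyalmost}, and your reverse direction is precisely the proof of that proposition with its crude estimate $\varsigma(S^{\ast},M)\leq\dim(W+S^{\ast}W_{1})\leq p+\dim E$ sharpened: the extra condition on $S^{\ast}G_{0}$ says exactly $S^{\ast}W_{1}\subset M+W$, so no new defect directions arise from $W_{1}$. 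This is also consistent with the paper's remark following Theorem \ref{main}, namely that without the extra condition $M$ is still $S^{\ast}$-almost invariant but with defect exceeding $p$; your proof makes that remark precise.

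One imprecision in your closing paragraph deserves correction: the two defects do \emph{not} agree in the forward direction. For $M=\theta H^{2}$ with $\theta$ inner, nonconstant, and $\theta(0)\neq0$, the minimal near-invariance defect is $0$ (case (i) of Theorem \ref{nearly}) while $\varsigma(S^{\ast},M)=1$. What your forward argument actually uses, and all it needs, is weaker: the minimal orthogonal defect space $W$ for almost invariance serves as a (possibly non-minimal) defect space for near invariance, and Theorem \ref{nearlym} builds its representation from any given defect space, the coefficients $k_{i}$ attached to superfluous $g_{i}$ being identically zero. Symmetrically, your reverse direction yields $\varsigma(S^{\ast},M)\leq p$ rather than $=p$, and exact equality can fail under a strict minimal-defect reading (represent an $S^{\ast}$-invariant $K_{\phi}$, for which the extra condition holds trivially, with an artificially enlarged one-dimensional $W$). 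So the corollary must be read, as in the cited papers and as the paper's own Definition of near invariance permits, with ``defect $p$'' meaning ``admits a defect space of dimension $p$''; under that reading your proof is complete.
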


Using Lemma \ref{lem1}, we reformulate the above theorem and corollary in
terms of ranges of Toeplitz and Hankel operators. (We have to use unbounded
Toeplitz operators, but the justification should be easy).

Recall $\Theta\in H_{B(E,E_{1})}^{\infty}$ is (left) inner if $\Theta
(z)^{\ast}\Theta(z)= I_{E}$ for almost all
$z\in\mathbb{T}$ for some $E\subset E_{1}.$ Similarly, $\Theta\in
H_{B(E,E_{1})}^{\infty}$ is right inner if $\Theta(z)\Theta(z)^{\ast
}=I_{E_{1}}$ for almost all  $z\in\mathbb{T}$ for
some $E_{1}\subset E.$ When $E=E_{1}$ and $\Theta$ is both left and right
inner, we say $\Theta$ is a two-sided inner function or a square inner
function. In short, if $\Theta$ is left inner, then we just say $\Theta$ is
inner. If $\Theta\in H_{B(E,E_{1})}^{\infty}$ is inner, let $K_{\Theta
}=H_{E_{1}}^{2}\ominus\Theta H_{E}^{2}$ denote the model space. The celebrated
Beurling-Lax-Halmos (BLH) Theorem \cite{Beurling} \cite{Lax} \cite{Halmos}
says an invariant subspace of $S_{E_{1}}$ is of the form $\Theta H_{E}^{2},$
and consequently, an invariant subspace of $S_{E_{1}}^{\ast}$ is of the form
$K_{\Theta}.$ It follows that $T_{\Theta}$ is an isometry from $H_{E}^{2}$
into $H_{E_{1}}^{2}$. When $\Theta$ is two-sided inner, by (\ref{thc}),
$H_{\Theta^{\ast}}^{\ast}H_{\Theta}^{\ast}=I-T_{\Theta}T_{\Theta
}^{\ast},$ so $H_{\Theta^{\ast}}^{\ast}$ is a partial isometry and
$R(H_{\Theta^{\ast}}^{\ast})=K_{\Theta}.$ It is known that the kernel of a
Hankel operator is $S$-invariant. However, when $\Theta$ is just left inner,
$N(H_{\Theta^{\ast}})\supset\Theta H_{E}^{2}$ and $R(H_{\Theta^{\ast}}^{\ast
})\subset K_{\Theta}$ in general. Thus representing $\Theta H_{E}^{2}$ as the
kernel of a Hankel operator is studied in \cite{GuDOParkj} when $\dim
E_{1}<\infty$, see also \cite{CHL} for a study of this problem when $\dim
E_{1}=\infty.$ See also \cite{LR15} where the connection of Hankel operators
and shift invariant subspaces on Dirichlet spaces are studied.
Connected to this, we have the following result.

\begin{lemma}
\textrm{\cite{CHL}}\label{hnull} Let $\Theta\in H_{B(E,E_{1})}^{\infty}$ be
inner. Then there exists $\Phi\in L_{B(E_{1},E_{1})}^{\infty}$ such that
$N(H_{\Phi})=\Theta H_{E}^{2}.$ Consequently, $K_{\Theta}=R(H_{\widetilde
{\Phi}})^{-}.$
\end{lemma}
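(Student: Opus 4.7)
The ``consequently'' clause follows at once from the first statement, using $H_\Phi^\ast = H_{\widetilde\Phi}$ and the standard Hilbert space identity $N(A)^\perp = \overline{R(A^\ast)}$:
\[
K_\Theta = (\Theta H_E^2)^\perp = N(H_\Phi)^\perp = \overline{R(H_\Phi^\ast)} = \overline{R(H_{\widetilde\Phi})}.
\]
The whole content of the lemma is therefore the production of a symbol $\Phi \in L_{B(E_1,E_1)}^\infty$ with $N(H_\Phi) = \Theta H_E^2$.

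The easy containment $\Theta H_E^2 \subseteq N(H_\Phi)$ is forced by $\Phi \Theta = 0$ a.e.\ on $\mathbb{T}$: then $\Phi \Theta h = 0 \in H_{E_1}^2$ for each $h \in H_E^2$, hence $H_\Phi(\Theta h) = J Q[\Phi \Theta h] = 0$. Because $\Theta^\ast \Theta = I_E$, the matrix function $\Theta\Theta^\ast$ is a pointwise orthogonal projection in $L_{B(E_1,E_1)}^\infty$, so any symbol of the form
\[
\Phi = W(I_{E_1} - \Theta\Theta^\ast),\qquad W \in L_{B(E_1,E_1)}^\infty,
\]
annihilates $\Theta$ pointwise and so qualifies.

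The substance is the reverse inclusion $N(H_\Phi) \subseteq \Theta H_E^2$. Writing $f = \Theta(\Theta^\ast f) + (I - \Theta\Theta^\ast)f$ for $f \in H_{E_1}^2$, the membership $f \in \Theta H_E^2$ is equivalent to $(I - \Theta\Theta^\ast)f = 0$, so one must pick $W$ so that $W[(I-\Theta\Theta^\ast)f] \in H_{E_1}^2$ forces $(I-\Theta\Theta^\ast)f = 0$. For this I would invoke the complementary inner factor of $\Theta$: since all spaces are finite dimensional (the standing assumption of Section~1), a complementation argument in the spirit of Potapov/Douglas--Shapiro--Shields produces an inner $\Xi \in H^\infty_{B(F,E_1)}$, with $\dim F = \dim E_1 - \dim E$, such that $\Xi\Xi^\ast = I - \Theta\Theta^\ast$ pointwise and $[\Theta\;\Xi]$ is two-sided inner on $E \oplus F \cong E_1$. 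The naive choice $\Phi_0 = [\Theta\;\Xi]^\ast$ (which handles the two-sided inner case directly, since then $\Phi_0 f \in H^2 \Leftrightarrow f \in [\Theta\;\Xi] H^2_{E\oplus F}$) yields
\[
N(H_{\Phi_0}) = [\Theta\;\Xi]H_{E\oplus F}^2 = \Theta H_E^2 \oplus \Xi H_F^2,
\]
which is too large by the extra summand $\Xi H_F^2$. The remedy is to post-compose the $\Xi$-block of $\Phi_0$ with an auxiliary bounded $L^\infty$-multiplier on $H_F^2$, engineered so that the corresponding Hankel operator detects every non-zero element of $\Xi H_F^2$; the resulting $\Phi$ then has $N(H_\Phi) = \Theta H_E^2$ exactly.

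The main obstacle is precisely this last step, namely verifying that the auxiliary multiplier trims the kernel down to $\Theta H_E^2$ without introducing new kernel elements. This is the delicate Fourier-analytic content of \cite{CHL} and crucially uses $\dim E_1 < \infty$ via the complementation.
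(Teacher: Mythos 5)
Your handling of the ``consequently'' clause is correct and is the standard reasoning: $K_\Theta=(\Theta H_E^2)^\perp=N(H_\Phi)^\perp=R(H_\Phi^\ast)^-=R(H_{\widetilde\Phi})^-$, and the inclusion $\Theta H_E^2\subseteq N(H_\Phi)$ for any symbol with $\Phi\Theta=0$ a.e.\ is fine. Note, however, that the paper offers no proof of this lemma at all --- it is imported wholesale from \cite{CHL} (with the surrounding text pointing to \cite{GuDOParkj} for $\dim E_1<\infty$) --- so everything beyond these easy observations has to stand on its own, and that is where your proposal breaks down.

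The central claim of your plan --- that finite dimensionality of $E_1$ yields an \emph{analytic} inner $\Xi\in H^\infty_{B(F,E_1)}$ with $\Xi\Xi^\ast=I-\Theta\Theta^\ast$ a.e.\ and $\bigl[\Theta\;\Xi\bigr]$ two-sided inner --- is false in general. Finite dimensionality guarantees only a pointwise measurable unitary completion, i.e.\ $\Xi\in L^\infty_{B(F,E_1)}$, and analyticity of $\Xi$ is a genuine extra constraint on $\Theta$. Already for a $2\times 1$ inner column $\Theta=\begin{pmatrix}a\\ c\end{pmatrix}$ with $|a|^2+|c|^2=1$ a.e., any unitary completion must have the form $\Xi=\lambda\begin{pmatrix}-\bar c\\ \bar a\end{pmatrix}$ with $\lambda=\det\bigl[\Theta\;\Xi\bigr]$ unimodular, so $\Xi\in H^\infty$ forces a scalar inner $\lambda$ with $\lambda\bar a,\lambda\bar c\in H^\infty$; by the Douglas--Shapiro--Shields description this requires $a$ and $c$ to admit pseudocontinuations of bounded type, which fails for generic admissible pairs. (Conversely, every entry of a two-sided inner $V$ is automatically pseudocontinuable, since $V^\ast=\operatorname{adj}(V)/\det V$ on $\mathbb{T}$, so a non-pseudocontinuable $\Theta$ can never be completed.) This obstruction is precisely why \cite{CHL} develops the ``complementary factor'' machinery rather than a square-inner completion: the hard case of the lemma is exactly the non-completable $\Theta$, where your argument never gets started. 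Moreover, even in the completable case your decisive step --- the auxiliary multiplier that trims $N(H_{\Phi_0})=\Theta H_E^2\oplus\Xi H_F^2$ down to $\Theta H_E^2$ without creating new kernel elements --- is not constructed but explicitly deferred to \cite{CHL}; since that kernel identification (which in the actual construction uses the measurable, generally non-analytic completion together with a scalar unimodular function not of bounded type) is the entire content of the lemma, what you have proved is only the easy inclusion and the ``consequently'' clause.
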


$K_{\Theta}$ is unchanged if we replace $\Theta$ by $\Theta\oplus U,$
where $U$ is a constant unitary operator. To clarify this situation, we recall
a classical concept. Let $B\in H_{B(E,E_{1})}^{\infty}$ and $B$ is
a contractive operator-valued function. The function $B$ is called purely
contractive (or just pure) if $\left\Vert B(0)v\right\Vert <\left\Vert
v\right\Vert $ for all $v\in E\backslash\{0\}.$ Any contractive
$B\in H^{\infty}_{B(E, E_{1})}$ admits a decomposition such that
\begin{equation}
B(z)=B_{1}(z)\oplus U:E=E_{1}\oplus E_{2}\rightarrow F=F_{1}\oplus F_{2},
\label{pured}%
\end{equation}
where $B_{1}$ is $B(E_{1},F_{1})$-valued and pure in the sense
that $\left\Vert B_{1}(0)v\right\Vert <\left\Vert v\right\Vert $ for $v\in
E_{1}\backslash\{0\}$ and $U$ is a unitary constant from $E_{2}$ onto $F_{2}.$
The $B_{1}$ is referred to be the purely contractive part and $U$ is the
unitary part of $B,$ see \cite[Page 194]{NFBK}. It is easy to see that with
respect to the decomposition (\ref{pured}),
\[
I_{F}-T_{B}T_{B}^{\ast}=\left(  I_{F_{1}}-T_{B_{1}%
}T_{B_{1}}^{\ast}\right)  \oplus0_{F_{2}}.
\]
Therefore, to represent $K_{\Theta},$ we can assume $\Theta$ is inner and pure.

\begin{theorem}
\label{main}A closed subspace $M$ of $H_{F}^{2}$ is $S_{F}^{\ast}$-almost
invariant if and only if one of the following holds:

\begin{itemize}
\item[\textrm{(1)}] $M=R(T_{\Theta})$, where $\Theta\in H_{B(E,F)}^{\infty}$
is inner. In this case, $\varsigma(S_{F}^{\ast},$ $M)=\dim E-rank(U),$ where
$U$ is the unitary part of $\Theta$.

\item[\textrm{(2)}] $M=R(T_{\Phi}\left(  I_{E_{1}}-T_{\Theta}T_{\Theta}^{\ast
}\right)  )$, where $\Theta\in H_{B(E,E_{1})}^{\infty}$ is inner and pure,
$\Phi\in H_{B(E_{1},F)}^{2},$ $\dim E_{1}<\infty,$ and $T_{\Phi}\left(
I_{E_{1}}-T_{\Theta}T_{\Theta}^{\ast}\right)  $ is a partial isometry. In this
case, a minimal defect space is $W:=R(S_{F}^{\ast}T_{\Phi}P_{E_{1}}%
)\ominus\left[  R(S_{F}^{\ast}T_{\Phi}P_{E_{1}})\cap M\right]  $ and
$\varsigma(S_{F}^{\ast},$ $M)=\dim W.$
\end{itemize}
\end{theorem}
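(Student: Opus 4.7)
The plan is to deduce the theorem from Theorem \ref{nearlym} and Corollary \ref{almostm} by translating the functional representation $G=G_{0}k_{0}+z\sum_{i=1}^{p}g_{i}k_{i}$ with $(k_{0},\ldots,k_{p})$ in an $S^{\ast}$-invariant subspace $K$ into the range of a Toeplitz-type operator. The key observation is that by the Beurling--Lax--Halmos theorem, $K=K_{\Theta}$ for some inner $\Theta$, and after absorbing any unitary constant we may take $\Theta$ pure. Since $T_{\Theta}$ is an isometry, $I_{E_{1}}-T_{\Theta}T_{\Theta}^{\ast}$ is the orthogonal projection $P_{K_{\Theta}}$. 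Bundling the wandering columns of $G_{0}$ with $zg_{1},\ldots,zg_{p}$ into a single matrix symbol $\Phi=[G_{0}\mid zg_{1}\mid\cdots\mid zg_{p}]\in H^{2}_{B(\mathbb{C}^{r+p},F)}$ realizes $M$ as $T_{\Phi}K_{\Theta}=R(T_{\Phi}(I-T_{\Theta}T_{\Theta}^{\ast}))$, and the isometric identity $\|G\|^{2}=\sum_{i}\|k_{i}\|^{2}$ says exactly that $T_{\Phi}(I-T_{\Theta}T_{\Theta}^{\ast})$ is a partial isometry. Form (1) will correspond to the degenerate possibility where the $\Theta$-part is trivial, in which case Beurling--Lax--Halmos rewrites $M$ as $R(T_{\Theta'})$ for a new inner $\Theta'$.

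For sufficiency I will verify each form directly using the commutation identity (\ref{ts}). In (1), (\ref{ts}) gives $S_{F}^{\ast}T_{\Theta}=T_{\Theta}S_{E}^{\ast}+S_{F}^{\ast}T_{\Theta}P_{E}$, so $S_{F}^{\ast}M\subset M+R(S_{F}^{\ast}T_{\Theta}P_{E})$. In (2), for $h\in K_{\Theta}$ one writes
\[
S_{F}^{\ast}T_{\Phi}h=T_{\Phi}S_{E_{1}}^{\ast}h+S_{F}^{\ast}T_{\Phi}P_{E_{1}}h,
\]
and since $S_{E_{1}}^{\ast}h\in K_{\Theta}$ by $S^{\ast}$-invariance of $K_{\Theta}$, the first summand lies in $M=T_{\Phi}K_{\Theta}$ while the second lies in the finite-dimensional $R(S_{F}^{\ast}T_{\Phi}P_{E_{1}})$, establishing almost invariance.

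For necessity, Proposition \ref{nearlyalmost} shows that $M$ being $S_{F}^{\ast}$-almost invariant makes it nearly $S^{\ast}$-invariant with finite defect, so Theorem \ref{nearlym} and Corollary \ref{almostm} apply. In case (i) of Theorem \ref{nearlym} I will assemble $\Theta$ and $\Phi$ as described above; the extra hypothesis of Corollary \ref{almostm}, that the columns of $S^{\ast}G_{0}$ lie in $M+W$, is precisely what allows the $G_{0}$-block to be incorporated into the single Toeplitz symbol $\Phi$ without destroying closedness. Case (ii) of Theorem \ref{nearlym} is treated identically with the $G_{0}$ column absent. Whenever the resulting $\Theta$-part is nontrivial, $M$ lands in form (2); when $K_{\Theta}=H^{2}_{E_{1}}$, the isometric identity forces $T_{\Phi}$ itself to be isometric, so $M=T_{\Phi}H^{2}_{E_{1}}$ is $S_{F}$-invariant and Beurling--Lax--Halmos yields form (1) after replacing $\Phi$ by its inner factor.

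The main obstacle will be the sharp defect computations. In (2), sufficiency gives the upper bound $\varsigma(S_{F}^{\ast},M)\leq\dim W$ for $W:=R(S_{F}^{\ast}T_{\Phi}P_{E_{1}})\ominus[R(S_{F}^{\ast}T_{\Phi}P_{E_{1}})\cap M]$; for the matching lower bound I will argue by minimality, showing that every vector of $W$ actually arises as $S_{F}^{\ast}h\pmod M$ for some $h\in M$, by producing explicit $h=T_{\Phi}P_{K_{\Theta}}e$ with $e\in E_{1}$ and exploiting purity of $\Theta$ (which guarantees $E_{1}\cap\Theta H^{2}_{E}=\{0\}$ at the constant level). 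The analogous computation for (1) reduces to determining the rank of $S_{F}^{\ast}T_{\Theta}P_{E}$ modulo $\Theta H^{2}_{E}$; after the pure/unitary decomposition (\ref{pured}) the unitary summand contributes zero and the pure summand contributes exactly $\dim E-\mathrm{rank}(U)$, using that $\Theta_{1}(0)$ is a strict contraction so that $S_{F}^{\ast}T_{\Theta_{1}}P_{E_{1}}$ is injective on $E_{1}$.
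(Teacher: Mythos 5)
Your proposal is correct and takes essentially the same route as the paper: reduce necessity to Theorem \ref{nearlym} via near invariance with finite defect, assemble $\Phi=\left[\begin{array}{cccc}G_{0} & zg_{1} & \cdots & zg_{p}\end{array}\right]$ and $K=K_{\Theta}$ by Beurling--Lax--Halmos with $\Theta$ pure, identify $I_{E_{1}}-T_{\Theta}T_{\Theta}^{\ast}$ with the projection onto $K_{\Theta}$ and the norm identity with the partial-isometry condition (\ref{product}), and obtain the defect spaces from (\ref{ts}) together with the purity claim (\ref{fulle1}) (your production of $h=T_{\Phi}P_{K_{\Theta}}e$ with $(P_{K_{\Theta}}e)(0)=(I_{E_{1}}-\Theta(0)\Theta(0)^{\ast})e$ surjective is exactly the paper's argument that $\Theta(0)$ is a strict contraction). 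The only deviations are cosmetic: you verify sufficiency directly from (\ref{ts}) where the paper cites Theorem \ref{maina}; the implication you attribute to Proposition \ref{nearlyalmost} is really the easy converse stated just before it (almost invariant with defect $p$ gives nearly invariant with defect at most $p$), which is what you need; and your handling of the case $K=H_{E_{1}}^{2}$, where isometry of $T_{\Phi}$ forces the symbol to be inner, is if anything slightly more careful than the paper's.
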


\begin{proof}
The ``if" direction follows from Theorem \ref{maina}. For the "only if"
direction, set $F=\mathbb{C}^{m}.$ We will deal with case (i) in Theorem
\ref{nearlym} since the proof of case (ii) is similar. Then in case (i)
$E_{1}=\mathbb{C}^{r+p}$, $\Phi=\left[
\begin{array}
[c]{cccc}%
G_{0} & zg_{1} & \cdots & zg_{p}%
\end{array}
\right]  $ is an inner matrix function  of size $m\times(r+p).$
Since $K\subset H_{\mathbb{C}^{r+p}}^{2}$ is $S^{\ast}$-invariant, either
$K=H_{\mathbb{C}^{r+p}}^{2}$ and hence, $M=R(T_{\Phi})$ and (1)
holds, or by BLH\ Theorem, $K=K_{\Theta}=H_{\mathbb{C}^{r+p}}%
^{2}\ominus\Theta H_{\mathbb{C}^{n}}^{2},$ where $\Theta\in H_{B(E,E_{1}%
,)}^{\infty}$ for $E=\mathbb{C}^{n}$ with $n\leq r+p.$ {\color{red}Note that} $I-T_{\Theta
}T_{\Theta}^{\ast}$ is the projection from $H_{\mathbb{C}^{r+p}}^{2}$ onto
$K_{\Theta}.$ By (\ref{mrep}), $M=T_{\Phi}K_{\Theta}=R(T_{\Phi}\left(
I_{E_{1}}-T_{\Theta}T_{\Theta}^{\ast}\right)  ).$ Now the equation $\left\Vert
G\right\Vert ^{2}=%
{\textstyle\sum_{i=0}^{p}}
\left\Vert k_{i}\right\Vert ^{2}$ is equivalent to that $T_{\Phi}$ acts on
$K_{\Theta}$ as an isometry. Namely, for all $h\in H_{E_{1}}^{2},$%
\[
\left\langle T_{\Phi}\left(  I-T_{\Theta}T_{\Theta}^{\ast}\right)  h,T_{\Phi
}\left(  I-T_{\Theta}T_{\Theta}^{\ast}\right)  h\right\rangle =\left\langle
\left(  I-T_{\Theta}T_{\Theta}^{\ast}\right)  h,h\right\rangle .
\]
Equivalently,%
\begin{equation}
\left[  T_{\Phi}\left(  I-T_{\Theta}T_{\Theta}^{\ast}\right)  \right]  ^{\ast
}\left[  T_{\Phi}\left(  I-T_{\Theta}T_{\Theta}^{\ast}\right)  \right]
=\left(  I-T_{\Theta}T_{\Theta}^{\ast}\right)  . \label{product}%
\end{equation}
Since $\left(  I-T_{\Theta}T_{\Theta}^{\ast}\right)  $ is a projection, we
know $\left[  T_{\Phi}\left(  I-T_{\Theta}T_{\Theta}^{\ast}\right)  \right]  $
is a partial isometry. The proof of the ``only if" direction is complete.

If $M=R(T_{\Theta}),$ then by (\ref{ts}), we have%
\[
S_{F}^{\ast}T_{\Theta}=T_{\Theta}S_{E}^{\ast}+S_{F}^{\ast}T_{\Theta}P_{E}.
\]
Thus the defect space $W$ of $M=R(T_{\Theta})$ is a subspace $R(S_{F}^{\ast
}T_{\Theta}P_{E}),$ where%
\[
R(S_{F}^{\ast}T_{\Theta}P_{E})=Span\left\{  \overline{z}\left[  \Theta
(z)-\Theta(0)\right]  e:e\in E\right\}  .
\]
Note that $\overline{z}\left[  \Theta(z)-\Theta(0)\right]  e\in
R(T_{\Theta})^{\perp}$, and hence the minimal orthogonal defect
space $W$ is $R(S_{F}^{\ast}T_{\Theta}P_{E}).$ It is easy to see that $\dim
R(S_{F}^{\ast}T_{\Theta}P_{E})=\dim E-rank(U),$ where $U$ is the unitary part
of $\Theta(z).$

If $M=R(T_{\Phi}\left(  I_{E_{1}}-T_{\Theta}T_{\Theta}^{\ast}\right)  ),$ then
by using (\ref{ts}) twice, we have
\begin{align}
&  S_{F}^{\ast}T_{\Phi}\left(  I-T_{\Theta}T_{\Theta}^{\ast}\right)
\nonumber\label{add1}\\
&  =\left(  T_{\Phi}S_{E_{1}}^{\ast}+S_{F}^{\ast}T_{\Phi}P_{E_{1}}\right)
\left(  I-T_{\Theta}T_{\Theta}^{\ast}\right)  \nonumber\\
&  =T_{\Phi}S_{E_{1}}^{\ast}\left(  I-T_{\Theta}T_{\Theta}^{\ast}\right)
+S_{F}^{\ast}T_{\Phi}P_{E_{1}}\left(  I-T_{\Theta}T_{\Theta}^{\ast}\right)
\nonumber\\
&  =T_{\Phi}\left(  I-T_{\Theta}T_{\Theta}^{\ast}\right)  S_{E_{1}}^{\ast}+G,
\end{align}
where%
\begin{equation}
G=-T_{\Phi}S_{E_{1}}^{\ast}T_{\Theta}P_{E}T_{\Theta}^{\ast}+S_{F}^{\ast
}T_{\Phi}P_{E_{1}}\left(  I-T_{\Theta}T_{\Theta}^{\ast}\right)  .\label{defGG}%
\end{equation}
Note $R(T_{\Phi}S_{E_{1}}^{\ast}T_{\Theta}P_{E}T_{\Theta}^{\ast
})\subset R(T_{\Phi}S_{E_{1}}^{\ast}T_{\Theta}P_{E})$ and
\[
R(T_{\Phi}S_{E_{1}}^{\ast}T_{\Theta}P_{E})=\left\{  \Phi(z)\overline{z}\left[
\Theta(z)-\Theta(0)\right]  e:e\in E\right\}  .
\]
Since $\Phi(z)\overline{z}\left[  \Theta(z)-\Theta(0)\right]  e\in M,$ the
defect space $W$ is a subspace of $R(S_{F}^{\ast}T_{\Phi}P_{E_{1}}\left(
I-T_{\Theta}T_{\Theta}^{\ast}\right)  )$. We claim that if
$\Theta$ is pure, then
\begin{equation}
R(P_{E_{1}}\left(  I-T_{\Theta}T_{\Theta}^{\ast}\right)  )=R(P_{E_{1}%
}).\label{fulle1}%
\end{equation}
Thus a minimal defect space $W$ is $R(S_{F}^{\ast}T_{\Phi}P_{E_{1}}%
)\ominus\left[  R(S_{F}^{\ast}T_{\Phi}P_{E_{1}})\cap M\right]  .$ To prove the
claim, assume there exists $e_{0}\in E_{1}$ such that $e_{0}\perp R(P_{E_{1}%
}\left(  I-T_{\Theta}T_{\Theta}^{\ast}\right)  ).$ Then for any $e_{1}\in
E_{1}$
\begin{align*}
0 &  =\left\langle e_{0},P_{E_{1}}\left(  I-T_{\Theta}T_{\Theta}^{\ast
}\right)  e_{1}\right\rangle _{E_{1}}\\
&  =\left\langle e_{0},e_{1}-P_{E_{1}}\left(  T_{\Theta}\Theta(0)^{\ast}%
e_{1}\right)  \right\rangle _{E_{1}}\\
&  =\left\langle e_{0},\left(  I_{E_{1}}-\Theta(0)\Theta(0)^{\ast})\right)
e_{1}\right\rangle _{E_{1}}.
\end{align*}
Since $\Theta$ is pure, $\Theta(0)$ is a strict contraction. Hence $e_{0}=0$
and the claim is proved.
\end{proof}

\

For a scalar Toeplitz operator $T_{\varphi}$, where $\varphi\in L^{\infty},$
it is known that $T_{\varphi}$ is a partial isometry if and only if $\varphi$
is inner \cite{BrownDouglas}. But it is difficult to characterize when
$T_{\Phi}\left(  I_{E_{1}}-T_{\Theta}T_{\Theta}^{\ast}\right)  $ is a partial
isometry, see Theorem \ref{nearly} above for the case when
both $\Phi\equiv g$ and
$\Theta\equiv \theta$ are scalar-valued functions.

\begin{corollary}
A closed subspace $M$ of $H_{F}^{2}$ is $S_{F}^{\ast}$-almost invariant if and
only if either $M=R(T_{\Theta})$, where $\Theta\in H_{B(E,F)}^{\infty}$ is
inner or $M=R(T_{\Phi}H_{\Psi})^{-}$, where $\Phi\in H_{B(E,F)}^{\infty}$ and
$\Psi\in L_{B(F,E)}^{\infty}.$
\end{corollary}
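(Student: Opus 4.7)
The plan is to obtain this corollary as a direct reformulation of Theorem~\ref{main} using Lemma~\ref{hnull}. Theorem~\ref{main} already gives the characterization in two forms: $M = R(T_\Theta)$ with $\Theta$ inner, or $M = R(T_\Phi(I_{E_1} - T_\Theta T_\Theta^*))$ with $\Theta$ inner and pure. The first form is already in the shape demanded by the corollary, so all the content lies in identifying the second form with a product $T_\Phi H_\Psi$ (up to closure).

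For the ``if'' direction, both implications are essentially already in hand. The case $M = R(T_\Theta)$ is covered by Theorem~\ref{main}(1); the case $M = R(T_\Phi H_\Psi)^-$ follows directly from Lemma~\ref{lem1}(i), which says that the closed range of any product of a Toeplitz and a Hankel operator is $S_F^*$-almost invariant with defect at most the dimension of the intermediate space.

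For the ``only if'' direction, assume $M$ is $S_F^*$-almost invariant but not of the form $R(T_\Theta)$. By Theorem~\ref{main} one may write $M = T_\Phi K_\Theta$, where $\Theta$ is inner and pure, $I_{E_1} - T_\Theta T_\Theta^*$ is the orthogonal projection onto $K_\Theta$, and $T_\Phi$ is an isometry on $K_\Theta$. Because $\Theta$ is inner, Lemma~\ref{hnull} furnishes a symbol $\Psi$ for which $K_\Theta = R(H_\Psi)^-$. Since $T_\Phi$ is bounded (indeed isometric) on $K_\Theta$, limits are preserved and the standard identity $\overline{A\overline{B}} = \overline{AB}$ yields
\[
M = T_\Phi K_\Theta = T_\Phi \overline{R(H_\Psi)} = \overline{T_\Phi R(H_\Psi)} = R(T_\Phi H_\Psi)^-,
\]
which is the required form.

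The main obstacle is essentially a bookkeeping issue about underlying Hilbert spaces and regularity. Lemma~\ref{hnull} as stated produces a Hankel operator $H_\Psi$ with domain $H^2_{E_1}$, whereas the corollary phrases $H_\Psi$ as a map $H^2_F \to H^2_E$. This is reconciled by taking the ``intermediate'' space $E$ of the corollary to be $E_1$ of Theorem~\ref{main}, and, if $F \neq E_1$, adjusting the symbol (for instance by padding with zeros) to enlarge the domain from $H^2_{E_1}$ to $H^2_F$ without altering the closure of the range. The same remark also absorbs the slight regularity discrepancy between $\Phi \in H^2_{B(E_1,F)}$ in Theorem~\ref{main} and $\Phi \in H^\infty_{B(E,F)}$ in the corollary, since only the restriction of $T_\Phi$ to $K_\Theta$ enters the argument and that restriction is already a bounded partial isometry.
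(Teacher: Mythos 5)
Your proof is correct and takes essentially the same route as the paper's: the paper likewise deduces the corollary from Theorem \ref{main} together with Lemma \ref{hnull}, writing $M=T_{\Phi}K_{\Theta}=R(T_{\Phi}H_{\Psi})^{-}$. Your added care---justifying the interchange of closure with $T_{\Phi}$ via its isometric action on $K_{\Theta}$, invoking Lemma \ref{lem1}(i) for the ``if'' direction, and reconciling the domain of the Hankel operator supplied by Lemma \ref{hnull} with the spaces named in the corollary---merely makes explicit details the paper leaves tacit.
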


\begin{proof}
By the above theorem, either $M=R(T_{\Theta})$ or $M=T_{\Phi}K_{\Theta}.$ By
Lemma \ref{hnull}, $K_{\Theta}=R(H_{\Psi})^{-}$ for some $\Psi\in
L_{B(F,E)}^{\infty}.$ Thus $M=T_{\Phi}K_{\Theta}=R(T_{\Phi
}H_{\Psi})^{-}.$
\end{proof}

\

The above theorem seems to suggest that ``an extra condition that the
column space of $S_{F}^{\ast}G_{0}$ is contained
in $M+G$ in case (i)" in Corollary
\ref{almostm} is not needed which is not the case, what the above theorem
says is
that without this condition, $M$ is still $S_{F}^{\ast}$-almost
invariant, but the defect of $M$ is more than $p.$ Corollary \ref{almostm}
shows how to get a $S_{F}^{\ast}$-almost invariant subspace with defect $p$
from a nearly $S_{F}^{\ast}$-invariant subspace with defect $p$ by this extra
condition. Our theorem combines these two concepts using Proposition
\ref{nearlyalmost}. Theorem \ref{main} captures the essential part of Theorem
\ref{nearlym} and Corollary \ref{almostm} and leaves out some details. We
think such a reformulation and simplification is useful. The approach to view
$M$ as the range of an operator involving Toeplitz and Hankel operators is
fruitful. Of course we can also add back more details (a more precise
information of $\Phi$) if we need to. We can pick out nearly $S_{F}^{\ast}%
$-invariant subspaces from $S_{F}^{\ast}$-almost invariant subspaces in the
following way.

\begin{corollary}
Let $M:=R(T_{\Phi}\left(  I_{E_{1}}-T_{\Theta}T_{\Theta}^{\ast}\right)  )$ be
as in Theorem \ref{main} \textrm{(2)}. Then $M$ is nearly $S_{F}^{\ast}%
$-invariant if and only if $rank\left[  \Phi(0)\right]  =\dim E_{1}.$ In
particular, $\dim E_{1}\leq\dim F.$ If $M=R(T_{\Theta})$, where $\Theta\in
H_{B(E,F)}^{\infty}$ is inner, then $M$ is nearly $S_{F}^{\ast}%
$-invariant if and only if $rank\left[  \Theta(0)\right]  =\dim F.$ In
particular, $\Theta$ is two-sided inner.
\end{corollary}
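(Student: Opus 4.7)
The plan is to reduce nearly $S_F^*$-invariance of $M$ to a concrete condition on $\Phi(0)$, combining the commutation identity from the proof of Theorem \ref{main} with the purity of $\Theta$. Since $T_\Phi(I_{E_1}-T_\Theta T_\Theta^*)$ is a partial isometry with initial space $K_\Theta$, the restriction $T_\Phi|_{K_\Theta}\colon K_\Theta\to M$ is an isometric isomorphism; every $h\in M$ has a unique expression $h=T_\Phi k$ with $k\in K_\Theta$, and $h(0)=\Phi(0)k(0)$. For $k\in K_\Theta$ one has $T_\Theta^* k=0$, so the commutation formula (\ref{add1}) from the proof of Theorem \ref{main} collapses to
\[
S_F^* T_\Phi k \;=\; T_\Phi S_{E_1}^* k \;+\; \bar z\bigl(\Phi-\Phi(0)\bigr)k(0).
\]
Since $K_\Theta$ is $S_{E_1}^*$-invariant by Beurling--Lax--Halmos, $T_\Phi S_{E_1}^* k\in M$. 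Consequently, $M$ is nearly $S_F^*$-invariant if and only if $\bar z\Phi k(0)\in M$ whenever $k\in K_\Theta$ satisfies $\Phi(0)k(0)=0$. The claim (\ref{fulle1}) in the proof of Theorem \ref{main}, which hinges on the purity of $\Theta$, says the evaluation $k\mapsto k(0)$ is surjective from $K_\Theta$ onto $E_1$; so the condition simplifies to $\bar z\Phi v\in M$ for every $v\in\ker\Phi(0)$.

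If $\operatorname{rank}\Phi(0)=\dim E_1$, then $\ker\Phi(0)=\{0\}$ and the condition is vacuous, giving the easy direction. For the converse, suppose $\operatorname{rank}\Phi(0)<\dim E_1$ and pick $v\in\ker\Phi(0)\setminus\{0\}$; it suffices to show $\bar z\Phi v\notin M$. This is the main obstacle and requires passing to the canonical representation from Theorem \ref{nearlym}(i): $E_1=\mathbb{C}^{r+p}$ and $\Phi=[G_0,\,zg_1,\ldots,zg_p]$, where the columns of $G_0$ form an orthonormal basis of $M\ominus(M\cap zH_F^2)$ and $\{g_1,\ldots,g_p\}$ an orthonormal basis of the minimal orthogonal defect space $W$. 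In that form $\Phi(0)=[G_0(0),0,\ldots,0]$, and the columns of $G_0(0)$ are linearly independent: any vanishing linear combination would produce a nonzero function in $M\ominus(M\cap zH_F^2)$ that also lies in $M\cap zH_F^2$, which is impossible. Hence $v=(0,v_1,\ldots,v_p)$ with some $v_i\ne 0$, and $\bar z\Phi v=\sum_i v_i g_i$ is a nonzero vector in $W\subset M^\perp$, so $\bar z\Phi v\notin M$, contradicting the reduction above.

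The "in particular" clause $\dim E_1\le\dim F$ is immediate because $\Phi(0)\in B(E_1,F)$ has rank at most $\min(\dim E_1,\dim F)$. The case $M=R(T_\Theta)$ follows the same template with $K_\Theta$ replaced by $H_E^2$: nearly $S_F^*$-invariance reduces to $\bar z\Theta v\in M$ for every $v\in\ker\Theta(0)$, and the computation
\[
T_\Theta^*(\bar z\,\Theta v)\;=\;S_E^* T_\Theta^* T_\Theta v\;=\;S_E^* v\;=\;0
\]
places $\bar z\Theta v\in N(T_\Theta^*)=(\Theta H_E^2)^\perp=M^\perp$; so $\bar z\Theta v\in M$ forces $\bar z\Theta v=0$, whence $v=0$ by the injectivity of $T_\Theta$. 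Thus nearly-invariance is equivalent to $\Theta(0)$ being injective; combined with $\operatorname{rank}\Theta(0)=\dim F$, this forces $\dim E=\dim F$ and $\Theta(0)$ bijective, whence $\Theta$ is two-sided inner.
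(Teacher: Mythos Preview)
For case~(2) your reduction to the condition ``$\bar z\Phi v\in M$ for all $v\in\ker\Phi(0)$'' and your easy direction coincide with the paper's. For the converse the paper argues abstractly: from $\bar z\Phi k(0)\in M=T_\Phi K_\Theta$ it writes $\bar z\Phi k(0)=\Phi k_1$ with $k_1\in K_\Theta$, multiplies by $z$, and cancels $\Phi$ to get $k(0)=zk_1$, forcing $k(0)=0$. You take a different route, specializing to the canonical $\Phi=[G_0,zg_1,\dots,zg_p]$ of Theorem~\ref{nearlym} and observing that for $v\in\ker\Phi(0)$ the vector $\bar z\Phi v$ lands in the orthogonal defect space $W$. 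Your argument is more explicit and avoids having to justify cancellation of $\Phi$ off $K_\Theta$; the paper's argument is shorter but tacitly uses that injectivity. Note that your passage to canonical form only proves the converse for the canonical $\Phi$, so you are implicitly reading ``as in Theorem~\ref{main}~(2)'' as referring to that specific construction; this is consistent with how the paper builds $\Phi$ in the proof of Theorem~\ref{main}, but you should say so.

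For $M=R(T_\Theta)$ there is a gap. Your computation $T_\Theta^*(\bar z\Theta v)=0$ correctly shows that nearly $S_F^*$-invariance of $\Theta H_E^2$ is equivalent to $\ker\Theta(0)=\{0\}$, i.e.\ $\operatorname{rank}\Theta(0)=\dim E$. But the corollary asserts $\operatorname{rank}\Theta(0)=\dim F$, and your final sentence ``combined with $\operatorname{rank}\Theta(0)=\dim F$'' invokes the conclusion rather than proving it. Since a left-inner $\Theta$ has $\dim E\le\dim F$, the two rank conditions agree only when $\dim E=\dim F$; you have supplied no argument that nearly-invariance forces this. Indeed your own criterion can hold with $\dim E<\dim F$: for $\Theta(z)=\tfrac12\bigl(\begin{smallmatrix}1+z\\1-z\end{smallmatrix}\bigr)$ one checks $\Theta$ is left inner, $\Theta(0)$ is injective, and $\Theta H^2\subset H^2_{\mathbb C^2}$ is nearly $S^*$-invariant, yet $\operatorname{rank}\Theta(0)=1<2=\dim F$. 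So what you have written establishes the injectivity criterion but not the statement with $\dim F$, nor the ``two-sided inner'' consequence. (The paper's proof for this case only says ``similarly''; running the paper's own method from the first part would likewise yield $\operatorname{rank}\Theta(0)=\dim E$.)
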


\begin{proof}
Assume $rank\left[  \Phi(0)\right]  =\dim E_{1}.$ Let $h\in M$ be such that
$h(0)=0.$ Write $h=\Phi k,$ where $k\in K_{\Theta}.$
Then $0=h(0)=\Phi(0)k(0)$ implies $k(0)=0$ since $\Phi(0)$ is full column
rank. Hence $S_{F}^{\ast}h=\Phi S_{E_{1}}^{\ast}k \in M$ and $M$
is nearly $S_{F}^{\ast}$-invariant.

On the other hand, assume $M$ is nearly $S_{F}^{\ast}$-invariant. Let $h\in M$
be such that $h(0)=0.$ Write $h=\Phi k,$ where $k\in
K_{\Theta}.$ Then%
\begin{align*}
S_{F}^{\ast}h(z)  &  =\Phi(z)S_{E_{1}}^{\ast}k(z)+\overline{z}\left[
\Phi(z)-\Phi(0)\right]  k(0)\\
&  =\Phi(z)S_{E_{1}}^{\ast}k(z)+\overline{z}\Phi(z)k(0)\in M
\end{align*}
implies that $\overline{z}\Phi(z)k(0)=\Phi(z)k_{1}(z)$ for some
$k_{1}\in K_{\Theta}$. Thus $\Phi(z)k(0)=\Phi(z)zk_{1}(z)$ and
$k(0)=zk_{1}(z).$ This can only happen if $k(0)=0.$ In conclusion,
$\Phi(0)k(0)=0$ implies $k(0)=0.$ If we show the set of all the possible
$k(0)$ is $E_{1},$ then the above implication proves that $rank\left[
\Phi(0)\right]  =\dim E_{1}.$ Indeed, the set of all possible $k(0)=P_{E_{1}%
}k_{1}(z),$ where $k_{1}\in K_{\Theta},$ is $R(P_{E_{1}}\left(
I-T_{\Theta}T_{\Theta}^{\ast}\right)  ),$ which is equal to $R(P_{E_{1}})$ by
(\ref{fulle1}) since $\Theta$ is pure.

In the case $M=R(T_{\Theta}),$ similarly, we can prove that $M$ is nearly
$S_{F}^{\ast}$-invariant if and only if $rank\left[  \Theta(0)\right]  =\dim
F.$
\end{proof}

\

The above corollary contains Lemma 2.4 in \cite{ChDas}, where $\Phi$ is
assumed to be a diagonal inner function. By Theorem \ref{main} and Lemma
\ref{lem2}, a $S_{F}^{\ast}$-almost invariant subspace is $S_{F}%
$-almost invariant, special examples in the scalar-valued case $(\dim F=1)$
have been observed in Proposition 2.2 in \cite{ChalendarGP} and
in the vector-valued case in Proposition 2.2 in \cite{ChDas}. We also
have the converse.

\begin{corollary}
\label{equivalent}Let $M$ be a closed subspace of $H_{F}^{2}.$ Then $M$ is
$S_{F}^{\ast}$-almost invariant if and only if $M$ is $S_{F}$-almost invariant.
\end{corollary}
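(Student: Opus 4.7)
My plan is to establish both implications by leveraging the structural characterization already in hand (Theorem \ref{main} and the corollary preceding this one), combined with Theorem \ref{maina} and Lemma \ref{duality}. The forward direction is nearly immediate once we recognize that the representation $M = R(T_\Phi H_\Psi)^-$ exhibits $M$ as the range of a product of a Toeplitz and a Hankel operator, to which Theorem \ref{maina} applies directly.

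For the forward implication, suppose $M$ is $S_F^\ast$-almost invariant. By the corollary immediately preceding this one, either $M = R(T_\Theta)$ with $\Theta \in H^\infty_{B(E,F)}$ inner, or $M = R(T_\Phi H_\Psi)^-$ with $\Phi \in H^\infty_{B(E,F)}$ and $\Psi \in L^\infty_{B(F,E)}$. In the first case, $T_\Theta$ is a (bounded) Toeplitz operator, and Theorem \ref{maina} applied to the single-term sum $T = T_\Theta$ shows that $R(T_\Theta)^-$ is $S_F$-almost invariant; since $T_\Theta$ is an isometry, the range is automatically closed. In the second case, $T_\Phi H_\Psi$ is a single product of bounded Toeplitz and Hankel operators (an odd number of Hankels, so the parity case of Theorem \ref{maina} applies), and therefore $R(T_\Phi H_\Psi)^-$ is $S_F$-almost invariant.

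For the backward implication, I would simply invoke Lemma \ref{duality}(i) twice together with the forward direction. If $M$ is $S_F$-almost invariant, then Lemma \ref{duality}(i) applied to $T = S_F$ gives that $M^\perp$ is $S_F^\ast$-almost invariant. Now by the forward direction (just established) applied to $M^\perp$, the subspace $M^\perp$ is $S_F$-almost invariant. A second application of Lemma \ref{duality}(i), this time to $T = S_F$ with $H_1 = M^\perp$, yields that $M = (M^\perp)^\perp$ is $S_F^\ast$-almost invariant, as desired.

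I do not anticipate any real obstacle: the heavy lifting has already been done in Theorem \ref{main} (which gives the concrete form of $S_F^\ast$-almost invariant subspaces as ranges of Toeplitz-Hankel products), in Theorem \ref{maina} (which is symmetric in $S_F$ and $S_F^\ast$ for such ranges), and in Lemma \ref{duality} (which handles the $\ast$-flip). The only point worth double-checking is that in the forward direction the range closure $R(T_\Phi H_\Psi)^-$ is exactly $M$ (which it is, by construction in the preceding corollary) and that boundedness of $T_\Phi$ and $H_\Psi$ is in force, so that the hypotheses of Theorem \ref{maina} are satisfied without appeal to the unbounded extension discussed in Section 1.
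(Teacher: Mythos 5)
Your proof is correct and takes essentially the same route as the paper's: the forward direction applies Theorem \ref{maina} to the structural representation of $M$ (the paper uses $M=R(T_{\Phi}(I_{E_{1}}-T_{\Theta}T_{\Theta}^{\ast}))$ from Theorem \ref{main}, an even-Hankel instance, while you pass through the preceding corollary's form $R(T_{\Phi}H_{\Psi})^{-}$, an odd-Hankel instance --- an immaterial difference), and your backward direction, applying Lemma \ref{duality}(i) twice around the forward implication, is word-for-word the paper's argument. One small caution: your claim that boundedness of $T_{\Phi}$ is ``in force'' inherits the preceding corollary's assertion $\Phi\in H^{\infty}_{B(E,F)}$, whereas Theorem \ref{main} only yields $\Phi\in H^{2}_{B(E_{1},F)}$, so strictly speaking both your proof and the paper's rely on the unbounded-symbol justification the authors defer to Section 1.
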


\begin{proof}
If $M$ is $S_{F}^{\ast}$-almost invariant, then by the above theorem,
$M=R(T_{\Phi}\left(  I_{E_{1}}-T_{\Theta}T_{\Theta}^{\ast}\right)  ).$ It
follows from Theorem \ref{maina} that $M$ is $S_{F}$-almost invariant.

If $M$ is $S_{F}$-almost invariant, then by Lemma \ref{duality}, $M^{\perp}$
is $S_{F}^{\ast}$-almost invariant. By what we just proved, $M^{\perp}$ is
$S_{F}$-almost invariant. By Lemma \ref{duality} again, $M=\left(  M^{\perp
}\right)  ^{\perp}$ is $S_{F}^{\ast}$-almost invariant.
\end{proof}

\

Quite amazingly, we obtain the same characterization of $S_{F}$-almost
invariant subspaces. We also find the defect space.

\begin{theorem}
\label{main2} Under the same notation as in Theorem \ref{main}, a
closed subspace $M$ of $H_{F}^{2}$ is $S_{F}$-almost invariant if and only if
one of the following holds:

\begin{itemize}
\item[\textrm{(1)}] $M=R(T_{\Theta})$, where $\Theta\in H_{B(E,F)}^{\infty}$
is inner. In this case, $\varsigma(S_{F},$ $M)=0.$

\item[\textrm{(2)}] $M=R(T_{\Phi}\left(  I_{E_{1}}-T_{\Theta}T_{\Theta}^{\ast
}\right)  )$, where $\Theta\in H_{B(E,E_{1})}^{\infty}$ is inner and pure,
$\Phi\in H_{B(E_{1},F)}^{2},$ $\dim E_{1}<\infty,$ and $T_{\Phi}\left(
I_{E_{1}}-T_{\Theta}T_{\Theta}^{\ast}\right)  $ is a partial isometry. In this
case, a minimal defect space $W$ is $R(T_{\Phi}T_{\Theta}P_{E})\ominus\left[
R(T_{\Phi}T_{\Theta}P_{E})\cap M\right]  $ and $\varsigma(S_{F},$ $M)=\dim W.$
\end{itemize}
\end{theorem}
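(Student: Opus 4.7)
The plan is to deduce the representation part of Theorem~\ref{main2} from Theorem~\ref{main} via Corollary~\ref{equivalent}: since a closed subspace $M\subset H_F^2$ is $S_F$-almost invariant if and only if it is $S_F^*$-almost invariant, the two forms (1) and (2) are inherited immediately. What remains is to pin down the minimal defect space under $S_F$.

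Case (1) is immediate because $\Theta\in H^\infty_{B(E,F)}$ gives the intertwining $S_F T_\Theta = T_\Theta S_E$, so $S_F M = \Theta S_E H_E^2\subset\Theta H_E^2 = M$ and $\varsigma(S_F,M)=0$. For Case (2), write $A := T_\Phi(I_{E_1}-T_\Theta T_\Theta^*)$ and expand $S_F A$ using three identities: the adjoint of (\ref{ts}) in the form $S_F T_\Phi = T_\Phi S_{E_1}-P_F T_\Phi S_{E_1}$; the intertwining $S_{E_1} T_\Theta = T_\Theta S_E$ (valid because $\Theta\in H^\infty$); and the adjoint of (\ref{ts}) applied to $\Theta$, giving $S_E T_\Theta^* = T_\Theta^* S_{E_1}-P_E T_\Theta^* S_{E_1}$. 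The stray term $P_F T_\Phi S_{E_1}(I-T_\Theta T_\Theta^*)$ should vanish because for $\Phi\in H^2$ and $k\in H^2_{E_1}$ the product $z\cdot\Phi k$ has zero $z^0$-Fourier coefficient. I expect the dust to settle as
\[
S_F A = A S_{E_1}+T_\Phi T_\Theta P_E T_\Theta^* S_{E_1},
\]
whence $S_F M\subset M+R(T_\Phi T_\Theta P_E)$. Since $R(T_\Phi T_\Theta P_E)$ is at most $\dim E$-dimensional, setting $W := R(T_\Phi T_\Theta P_E)\ominus[R(T_\Phi T_\Theta P_E)\cap M]$ inside $R(T_\Phi T_\Theta P_E)$ produces $S_F M\subset M+W$ together with $W\cap M=\{0\}$ by construction, which already yields the upper bound $\varsigma(S_F,M)\le\dim W$.

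The main obstacle is the matching lower bound $\varsigma(S_F,M)\ge\dim W$. For this the commutator identity gives $R(T_\Phi T_\Theta P_E T_\Theta^* S_{E_1})\subset S_F M+M$, so factoring out $T_\Phi T_\Theta$ reduces the problem to showing $R(P_E T_\Theta^* S_{E_1}) = E$. Passing to the adjoint $S_{E_1}^* T_\Theta P_E$, injectivity amounts to: if $v\in E$ satisfies $\Theta_n v=0$ for all $n\ge 1$, then $v=0$. But in that case $\Theta v$ is constant, and the isometric identity $\|\Theta v\|^2 = \|v\|^2$ combined with $\|\Theta v\|^2 = \|\Theta(0)v\|^2$ forces $\|\Theta(0)v\|=\|v\|$, which by purity of $\Theta$ gives $v=0$. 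This is the same flavor of purity argument used to verify (\ref{fulle1}) in the proof of Theorem~\ref{main}. Finite-dimensionality of $E$ then upgrades dense range to surjectivity, so $R(T_\Phi T_\Theta P_E T_\Theta^* S_{E_1}) = R(T_\Phi T_\Theta P_E)\supset W$, and hence $W\subset S_F M+M$. Combined with $W\cap M=\{0\}$, this yields $\dim W\le\dim((S_F M+M)/M)=\varsigma(S_F,M)$, closing the proof.
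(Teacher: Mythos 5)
Your proposal is correct and follows essentially the same route as the paper: you derive the same intertwining identity $S_F T_\Phi\left(I-T_\Theta T_\Theta^*\right)=T_\Phi\left(I-T_\Theta T_\Theta^*\right)S_{E_1}+T_\Phi T_\Theta P_E T_\Theta^* S_{E_1}$ (the paper's (\ref{defGGG})) and prove minimality via the same claim $R(P_E T_\Theta^* S_{E_1})=E$, using the identical purity argument ($\Theta v$ constant forces $\|\Theta(0)v\|=\|v\|$), phrased through the adjoint kernel rather than the paper's orthogonality computation. The only differences are cosmetic: you make explicit the vanishing of $P_F T_\Phi S_{E_1}$ and the lower-bound bookkeeping $\dim W\le\varsigma(S_F,M)$, both of which the paper leaves implicit.
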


\begin{proof}
By using (\ref{ts}), we have
\begin{align}
&  S_{F}T_{\Phi}\left(  I-T_{\Theta}T_{\Theta}^{\ast}\right)  \nonumber\\
&  =T_{\Phi}S_{E_{1}}-T_{\Phi}T_{\Theta}S_{E}T_{\Theta}^{\ast}\nonumber\\
&  =T_{\Phi}S_{E_{1}}-T_{\Phi}T_{\Theta}\left(  T_{\Theta}^{\ast}S_{E_{1}%
}-P_{E}T_{\Theta}^{\ast}S_{E_{1}}\right)  \nonumber\\
&  =T_{\Phi}\left(  I-T_{\Theta}T_{\Theta}^{\ast}\right)  S_{E_{1}}+T_{\Phi
}T_{\Theta}P_{E}T_{\Theta}^{\ast}S_{E_{1}}.\label{defGGG}%
\end{align}
Therefore, the defect space $W$ is a subspace of $R(T_{\Phi}T_{\Theta}%
P_{E})=\left\{  \Phi(z)\Theta(z)e:e\in E\right\}.$  We now claim $R(P_{E}%
T_{\Theta}^{\ast}S_{E_{1}})=E.$ To prove the claim, assume there
exists $e_{0}\in E$ such that $e_{0}\perp R(P_{E}T_{\Theta}^{\ast}S_{E_{1}}).$
Then for any $h\in H_{E_{1}}^{2},$
\begin{align*}
0 &  =\left\langle e_{0},P_{E}T_{\Theta}^{\ast}S_{E_{1}}h\right\rangle
_{E}=\left\langle e_{0},P_{E}T_{\Theta}^{\ast}S_{E_{1}}h\right\rangle
_{H_{E}^{2}}\\
&  =\left\langle e_{0},T_{\Theta}^{\ast}S_{E_{1}}h\right\rangle _{H_{E}^{2}%
}=\left\langle \Theta e_{0},zh\right\rangle _{H_{E}^{2}}\\
&  =\left\langle \overline{z}\left[  \Theta(z)-\Theta(0)\right]
e_{0},h\right\rangle _{H_{E}^{2}}.
\end{align*}
Thus $\overline{z}\left[  \Theta(z)-\Theta(0)\right]  e_{0}=0$ and $\left\Vert
\Theta(z)e_{0}\right\Vert _{H_{E}^{2}}=\left\Vert \Theta(0)e_{0}\right\Vert
_{H_{E}^{2}}.$ Since $T_{\Theta}$ is an isometry, $\left\Vert \Theta
(z)e_{0}\right\Vert _{H_{E}^{2}}=\left\Vert e_{0}\right\Vert _{E}.$ Since
$\Theta$ is pure, $\left\Vert \Theta(0)e_{0}\right\Vert _{H_{E}^{2}%
}<\left\Vert e_{0}\right\Vert _{E}\ $if $e_{0}\neq0$. Hence $e_{0}=0$ and the
claim is proved. Thus a minimal defect space $W$ is $R(T_{\Phi}T_{\Theta}%
P_{E})\ominus\left[  R(T_{\Phi}T_{\Theta}P_{E})\cap M\right]  .$
\end{proof}


\section{Examples of $S_{F}$-almost invariant subspaces}

In spite of a complete characterization of $S_{F}^{\ast}$-almost invariant
subspaces and $S_{F}$-almost invariant subspaces by \cite{ChalendarCP}
\cite{ChalendarGP} \cite{ChDas} \cite{OLoughlin} and by Theorem \ref{main} and
Theorem \ref{main2}, a good understanding of those subspaces requires to
clarify the relation between $\Phi$ and $\Theta$ which seems difficult, as
this can be already seen in Theorem \ref{nearly} which is the simplest case
when both $\Phi\equiv g$ and $\Theta\equiv \theta.$
are scalar-valued functions. In this section, we
give some examples of such $\Phi$ and $\Theta$.

For $a\in\mathbb{D}$, let
\begin{equation}
\varphi_{a}(z)=\frac{z-a}{1-\overline{a}z} \label{defm}%
\end{equation}
be the automorphism of $\mathbb{D}$. Then $b(z)=\lambda%
{\textstyle\prod_{i=1}^{n}}
\varphi_{a_{i}}(z)$ is a finite Blaschke product, where $a_{i}\in\mathbb{D}$
and $\left\vert \lambda\right\vert =1.$ Let $F$ be a subspace of
$E$ and $P_{F}$ be the projection from $E$ onto $F.$ The inner function
\[
Q(z,a,F,E)=\varphi_{a}(z)\left(  I_{E}-P_{F\text{ }}\right)  +P_{F}%
\]
is called a Blaschke-Potapov factor. To avoid triviality, we assume $F\neq
E\ $whenever we write down a Blaschke-Potapov factor. Let $B(z):=U%
{\textstyle\prod_{i=1}^{n}}
Q(z,a_{i},F_{i},E)$, where $U\in B(E)$ is unitary, $a_{i}\in\mathbb{D}$ and
$F_{i}$ is a subspace of $E$ for $1\leq i\leq n.$ Such a $B\in
H_{B(E)}^{\infty}$ is called a finite Blaschke-Potapov product. It is known
that $K_{\Theta}$ is of finite dimension if and only if $\Theta$ is a finite
Blaschke-Potapov product \cite{Pe}.

\begin{proposition}\label{proposition4.1}
Let $\Phi\in H_{B(E_{1},F)}^{2}$ be inner and $\Theta\in H_{B(E,E_{1}%
)}^{\infty}$ be inner and pure. Let
\[
M:=R(T_{\Phi}\left(  I_{E_{1}}-T_{\Theta}T_{\Theta}^{\ast}\right)  ).
\]
The following statements hold.

\begin{itemize}
\item[\textrm{(i)}] $M$ is $S_{F}$-almost invariant and $S_{F}^{\ast}$-almost invariant.

\item[\textrm{(ii)}] $\varsigma(S_{F},$ $M)=\varsigma(S_{F}^{\ast},M^{\perp
})=\dim E$ and $\varsigma(S_{F},$ $M^{\perp})=\varsigma(S_{F}^{\ast},$
$M)=\dim E_{1}-rank(U),$ where $U$ is the unitary part of $\Phi.$

\item[\textrm{(iii)}] $M^{\perp}=R(T_{\Phi\Theta})\oplus K_{\Phi}%
=R(T_{\Phi_{1}}\left(  I-T_{\Theta_{1}}T_{\Theta_{1}}^{\ast}\right)  )$, where
$\Phi_{1}$ and $\Theta_{1}$ are defined by
\begin{equation}
\Phi_{1}=\left[
\begin{array}
[c]{cc}%
\Phi\Theta & I_{F}%
\end{array}
\right]  \in H_{B(E\oplus F,F)}^{\infty}\quad\text{and}\quad\Theta_{1}=\left[
\begin{array}
[c]{c}%
0\\
\Phi
\end{array}
\right]  \in H_{B(E,E\oplus F)}^{\infty}\text{.} \label{phitheta}%
\end{equation}

\item[\textrm{(iv)}] $M$ is a half-space if and only if $\Theta$ is not a
finite Blaschke-Potapov product.
\end{itemize}
\end{proposition}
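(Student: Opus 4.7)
The plan is to exploit the explicit formulas for minimal orthogonal defect spaces provided by Theorem \ref{main}(2) and Theorem \ref{main2}(2), combined with the orthogonal decomposition
\[
H_F^2 = \Phi H_{E_1}^2 \oplus K_\Phi = \Phi K_\Theta \oplus \Phi\Theta H_E^2 \oplus K_\Phi = M \oplus R(T_{\Phi\Theta}) \oplus K_\Phi
\]
that arises because both $\Phi$ and $\Theta$ are inner. For (i), $T_\Theta$ is an isometry so $I - T_\Theta T_\Theta^*$ is the orthogonal projection onto $K_\Theta$, and composition with the isometry $T_\Phi$ automatically makes $T_\Phi(I - T_\Theta T_\Theta^*)$ a partial isometry. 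Thus $M$ fits case (2) of Theorem \ref{main} and is $S_F^*$-almost invariant; Corollary \ref{equivalent} then yields $S_F$-almost invariance.

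For (ii), I would compute $\varsigma(S_F^*,M)$ and $\varsigma(S_F,M)$ directly from their defect spaces, then deduce the complements via Lemma \ref{duality}(i). For $\varsigma(S_F^*,M)$, Theorem \ref{main}(2) identifies the minimal defect space as $W = R(S_F^* T_\Phi P_{E_1}) \ominus [R(S_F^* T_\Phi P_{E_1}) \cap M]$. A short check using $T_\Phi^* T_\Phi = I_{E_1}$ gives $T_\Phi^*(S_F^* T_\Phi e) = P[\bar z(I - \Phi(z)^* \Phi(0)) e] = 0$ for every constant $e \in E_1$, since the argument lies in $\overline{z H_{E_1}^2}$. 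Hence $R(S_F^* T_\Phi P_{E_1}) \subset K_\Phi \subset M^\perp$, so the intersection with $M$ is trivial and $\dim W = \dim R(S_F^* T_\Phi P_{E_1})$. To count the latter, decompose $\Phi = \Phi_p \oplus U$ as in (\ref{pured}); the kernel of $e \mapsto \bar z[\Phi(z) - \Phi(0)] e$ is $\{e : \Phi(z) e \text{ constant}\}$, which by purity of $\Phi_p$ is exactly the domain of $U$, of dimension $\mathrm{rank}(U)$. So $\varsigma(S_F^*, M) = \dim E_1 - \mathrm{rank}(U)$. The computation of $\varsigma(S_F, M)$ is parallel: Theorem \ref{main2}(2) gives $W' = R(T_\Phi T_\Theta P_E) \ominus [R(T_\Phi T_\Theta P_E) \cap M]$; since $R(T_\Phi T_\Theta P_E) \subset \Phi\Theta H_E^2 \perp M$ from the displayed decomposition, $\dim W' = \dim E$ because $\Phi\Theta$ is inner.

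For (iii), the first identity $M^\perp = R(T_{\Phi\Theta}) \oplus K_\Phi$ is immediate from the displayed decomposition. For the second representation, I would compute that $T_{\Theta_1}^*(g,k) = T_\Phi^* k$ (only the $F$-component contributes), whence $(I - T_{\Theta_1} T_{\Theta_1}^*)(g,k) = (g, P_{K_\Phi} k)$, and then $T_{\Phi_1}(I - T_{\Theta_1} T_{\Theta_1}^*)(g,k) = T_{\Phi\Theta} g + P_{K_\Phi} k$, whose range is $R(T_{\Phi\Theta}) + K_\Phi = M^\perp$. Finally (iv) follows because $T_\Phi$ is isometric, so $\dim M = \dim K_\Theta$; this is classically finite precisely when $\Theta$ is a finite Blaschke-Potapov product, while $M^\perp \supset \Phi\Theta H_E^2$ is always infinite-dimensional. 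The main obstacle is the bookkeeping in (ii): verifying the inclusion $R(S_F^* T_\Phi P_{E_1}) \subset M^\perp$ and identifying the kernel of the rank-counting map via the unitary-part decomposition of $\Phi$.
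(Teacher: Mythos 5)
Your proposal is correct, and it follows the paper's overall architecture (invoke Theorem \ref{main} and Theorem \ref{main2} for (i) and for the minimal defect spaces, then transfer to $M^{\perp}$ via Lemma \ref{duality}(i)), but two of your sub-arguments genuinely differ from the paper's. For the trivial-intersection step in (ii), the paper assumes $S_{F}^{\ast}T_{\Phi}P_{E_{1}}e_{1}=T_{\Phi}(I-T_{\Theta}T_{\Theta}^{\ast})h$, rewrites it as $\Phi(z)(e_{1}-zg)=\Phi(0)e_{1}$, and forces $g=0$ from the norm identity $\left\Vert \Phi(0)e_{1}\right\Vert ^{2}=\left\Vert e_{1}\right\Vert ^{2}+\left\Vert zg\right\Vert ^{2}$; you instead prove the stronger structural inclusion $R(S_{F}^{\ast}T_{\Phi}P_{E_{1}})\subset K_{\Phi}\subset M^{\perp}$ by checking $T_{\Phi}^{\ast}S_{F}^{\ast}T_{\Phi}e=P[\overline{z}(I-\Phi(z)^{\ast}\Phi(0))e]=0$, which is valid (using $\Phi^{\ast}\Phi=I_{E_{1}}$ a.e. and anti-analyticity of $\Phi^{\ast}$) and arguably cleaner, since it locates the defect space rather than merely killing the intersection. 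For (iii), the paper does an element-chase: for $h\perp M$ it writes $T_{\Phi}^{\ast}h=\Theta g$, decomposes $h=\Phi h_{1}+h_{2}$ with $h_{2}\in K_{\Phi}$, and extracts $h_{1}=\Theta g$; you read the identity $M^{\perp}=R(T_{\Phi\Theta})\oplus K_{\Phi}$ off the orthogonal decomposition $H_{F}^{2}=\Phi K_{\Theta}\oplus\Phi\Theta H_{E}^{2}\oplus K_{\Phi}$, which is immediate because $T_{\Phi}$ is an isometry with closed range, so the orthogonality $K_{\Theta}\perp\Theta H_{E}^{2}$ is preserved. Your verification of the second representation in (iii) by direct computation of $(I-T_{\Theta_{1}}T_{\Theta_{1}}^{\ast})(g,k)=(g,P_{K_{\Phi}}k)$ is more explicit than the paper's appeal to $K_{\Theta_{1}}=H_{E}^{2}\oplus K_{\Phi}$, though equivalent in substance; your rank count for $\dim E_{1}-rank(U)$ via the purity decomposition (\ref{pured}) is the same compressed check the paper makes with $\Phi=U\oplus\Psi$, and (iv) is identical in both. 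What the paper's norm argument buys is independence from the inclusion into $K_{\Phi}$; what your variants buy is more structural information and shorter bookkeeping.
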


\begin{proof}
By assumption, we have $E\subset E_{1}\subset F.$
Since $\Phi$ is inner, by Theorem \ref{main2}, $\varsigma(S_{F},$
$M)=\dim R(T_{\Phi}T_{\Theta}P_{E}).$ But $T_{\Phi}T_{\Theta}P_{E}=T_{\Phi}%
{}_{\Theta}P_{E}$ and $\Phi\Theta$ is inner, so $\dim R(T_{\Phi}T_{\Theta
}P_{E})=\dim E.$ By Lemma \ref{duality}, $\varsigma(S_{F}^{*},
\,M^{\perp})=\varsigma(S_{F},$ $M)=\dim E.$

By Theorem \ref{main}, $\varsigma(S_{F}^{\ast},$ $M)=\dim W,$ where
$W:=R(S_{F}^{\ast}T_{\Phi}P_{E_{1}})\ominus\left[  R(S_{F}^{\ast}T_{\Phi
}P_{E_{1}})\cap M\right]  $ is a minimal defect space. We claim $R(S_{F}%
^{\ast}T_{\Phi}P_{E_{1}})\cap M=\{0\}.$ Assume $e_{1}\in E_{1}$ be such that
for some $h\in H_{E_{1}}^{2}$%
\[
S_{F}^{\ast}T_{\Phi}P_{E_{1}}e_{1}=T_{\Phi}\left(  I_{E_{1}}-T_{\Theta
}T_{\Theta}^{\ast}\right)  h.
\]
Set $g:=\left(  I_{E_{1}}-T_{\Theta}T_{\Theta}^{\ast}\right)  h.$ It follows
that%
\[
\left[  \Phi(z)-\Phi(0)\right]  e_{1} =z\Phi(z)g\ \ \text{and}\ \ \Phi
(z)\left(  e_{1}-zg\right)  =\Phi(0)e_{1}.
\]
Since $\Phi$ is inner,
\[
\left\Vert \Phi(0)e_{1}\right\Vert ^{2} = \left\Vert
\Phi(z)\left(  e_{1}-zg\right)  \right\Vert ^{2} =\left\Vert \left(
e_{1}-zg\right)  \right\Vert ^{2} =\left\Vert e_{1}\right\Vert ^{2}+\left\Vert
zg\right\Vert ^{2}.
\]
Thus $g=0.$ This proves the claim $R(S_{F}^{\ast}T_{\Phi}P_{E_{1}})\cap
M=\{0\}.$ So $W=R(S_{F}^{\ast}T_{\Phi}P_{E_{1}})$ is a minimal defect space.
Write $\Phi=U\oplus \Psi$, where $U$ is the unitary part of $\Phi$
and $\Psi$ is the purely contractive part of $\Phi.$ Therefore,
$W=R(S_{F}^{\ast}T_{\Phi}P_{E_{1}})=R(S_{F}^{\ast} T_{0\oplus\Psi
}P_{E_{1}})$ and $\dim W=\dim E_{1}-rank(U).$ This proves (ii).

Next for (iii), we find\textbf{ }$M^{\perp}.$ Assume $h\perp M$.
Then $T_{\Phi}^{\ast}h\perp K_{\Theta}$ and there exists $g\in H_{E}^{2}$ such
that $T_{\Phi}^{\ast}h=\Theta g.$ Thus%
\[
\Phi^{\ast}h=\Theta g+\overline{zu(z)}\text{ for some }u\in H_{E_{1}}^{2}.
\]
Write $h=\Phi h_{1}+h_{2},$ where $h_{1}\in H_{E_{1}}^{2}$ and $h_{2}\in
K_{\Phi}.$ Plugging this decomposition of $h$ into the above equation, we
have
\[
h_{1}=\Theta g+\overline{zu(z)}+\Phi^{\ast}h_{2}.
\]
Since $\Phi^{\ast}h_{2}\in\overline{zH_{E_{1}}^{2}},$ we have
$h_{1}=\Theta g.$ Hence $h=\Phi h_{1}+h_{2}=\Phi\Theta g+h_{2}\in
R(T_{\Phi\Theta})\oplus K_{\Phi}$ and $M^{\perp}\subset R(T_{\Phi\Theta
})\oplus K_{\Phi}.$ The inclusion $R(T_{\Phi\Theta})\oplus K_{\Phi}\subset
M^{\perp}$ can be verified. So $M^{\perp}=R(T_{\Phi\Theta})\oplus K_{\Phi}.$
According to Theorem \ref{main2}, $R(T_{\Phi\Theta})\oplus K_{\Phi}%
=R(T_{\Phi_{1}}\left(  I-T_{\Theta_{1}}T_{\Theta_{1}}^{\ast}\right)  )$ for
some $\Phi_{1}$ and $\Theta_{1}.$ Indeed, if $\Phi_{1}$ and
$\Theta_{1}$ are defined by (\ref{phitheta}), then $\Theta_{1}$ is inner and
$T_{\Phi_{1}}$ acts an an isometry on $K_{\Theta_{1}}.$ Since $K_{\Theta_{1}%
}=H_{E}^{2}\oplus K_{\Phi},$ we have $M^{\perp}=R(T_{\Phi_{1}}\left(
I-T_{\Theta_{1}}T_{\Theta_{1}}^{\ast}\right)  ).$ We can also use this
representation $M^{\perp}$ to compute $\varsigma(S_{F}^{\ast},M^{\perp})$ and
$\varsigma(S_{F},$ $M^{\perp})$ directly. This proves (iii).

Since $T_{\Phi}$ is an isometry, $M$ is finite dimensional if and only only if
$\Theta$ is a finite Blaschke-Potapov product. When $\Theta$ is not a finite
Blaschke-Potapov product, then $M$ is infinite dimensional. By (iii),
$M^{\perp}$ is always infinite dimensional. Hence $M$ is a half-space if and
only if $\Theta$ is not a finite Blaschke-Potapov product. This
proves (iv).
\end{proof}

\

\begin{example}\label{example4.2} From Proposition \ref{proposition4.1}, one can see
that if $\Phi\in H^\infty_{B(E)}$ is inner and $\Theta\in
H^\infty_{B(E)}$ is inner and pure then
$$
\Phi\bigl( \hbox{\rm ran}\,(I_E-T_\Theta T_\Theta^*)\bigr)= \Phi
\bigl(\hbox{\rm ran}\, (H_{\Theta^*}^*)\bigr)= \Phi K_\Theta
$$
is $S_F$-almost invariant. In particular, $K_{\Theta}$ is
$S_{F}$-almost invariant with $\varsigma(S_{F},$ $K_{\Theta })=\dim
E.$
\end{example}

Example \ref{example4.2} contains Propositions 1.4 and
1.5 in \cite{ChalendarGP}, where $M=\varphi K_{\theta}$ is
considered with $\varphi$ and $\theta$ being two scalar inner
functions and it also contains Propositions 2.5 and 2.6 in
\cite{ChDas} where $M=\left( \Psi K_{\Theta}\right)  ^{\perp}$ is
considered with $\Psi$ being a diagonal square inner function and
$\Theta$ being an inner function.

\

In view of Proposition \ref{proposition4.1}, we make the following
conjecture.

\begin{conjecture}
Let $M:=R(T_{\Phi}\left(  I_{E_{1}}-T_{\Theta}T_{\Theta}^{\ast}\right)  )$ be
as in Theorem \ref{main}. Then $M$ is a half-space if and only if $\Theta$ is
not a finite Blaschke-Potapov product.
\end{conjecture}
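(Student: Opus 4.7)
The ``only if'' direction is immediate. The partial-isometry hypothesis $[T_\Phi(I-T_\Theta T_\Theta^*)]^*[T_\Phi(I-T_\Theta T_\Theta^*)] = I_{E_1}-T_\Theta T_\Theta^*$ says precisely that $T_\Phi$ acts isometrically on $K_\Theta = R(I_{E_1}-T_\Theta T_\Theta^*)$; hence $\dim M = \dim T_\Phi K_\Theta = \dim K_\Theta$. If $M$ is a half-space, $\dim M = \infty$, so $K_\Theta$ is infinite dimensional and $\Theta$ is not a finite Blaschke-Potapov product.

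For the ``if'' direction, the same isometric identity gives $\dim M = \dim K_\Theta = \infty$ as soon as $\Theta$ is not a finite Blaschke-Potapov product, so the substance is to prove $M^\perp$ is also infinite dimensional. My plan follows the blueprint of Proposition~\ref{proposition4.1}(iii), which handled the inner-$\Phi$ case via the explicit decomposition $M^\perp = R(T_{\Phi\Theta}) \oplus K_\Phi$. For general $\Phi$, unwinding the inner product yields
\[
M^\perp = \{f \in H_F^2 : T_{\Phi^*}f \in \Theta H_E^2\} = T_{\Phi^*}^{-1}(\Theta H_E^2),
\]
so $M^\perp \supset \ker T_{\Phi^*}$ and we are done if this kernel is infinite dimensional. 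Otherwise $\overline{R(T_\Phi)}$ has finite codimension in $H_F^2$, and the task reduces to exhibiting infinitely many independent $f \in \overline{R(T_\Phi)}$ with $T_{\Phi^*} f \in \Theta H_E^2$. In parallel, Corollary~\ref{equivalent} together with Lemma~\ref{duality} shows that $M^\perp$ is itself $S_F^*$-almost invariant, so Theorem~\ref{main} gives either $M^\perp = R(T_{\Theta'})$, which is automatically infinite dimensional, or $M^\perp = T_{\Phi'} K_{\Theta'}$ with $\dim M^\perp = \dim K_{\Theta'}$; in the second case the conjecture reduces to excluding $\Theta'$ being a finite Blaschke-Potapov product while $\Theta$ is not.

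A cleaner route I would pursue in parallel is to iterate the identity $S_F T_\Phi(I-T_\Theta T_\Theta^*) = T_\Phi(I-T_\Theta T_\Theta^*)S_{E_1} + T_\Phi T_\Theta P_E T_\Theta^* S_{E_1}$ from the proof of Theorem~\ref{main2}, which shows that the smallest $S_F$-invariant closed subspace containing $M$ equals $N := \overline{M + \Phi\Theta H_E^2}$. By Beurling--Lax--Halmos, either $N = H_F^2$ or $N = \Xi H_{E''}^2$ for some inner $\Xi \in H_{B(E'',F)}^\infty$; in the latter case $M^\perp \supset K_\Xi$ and we are done provided $K_\Xi$ is infinite dimensional. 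The main obstacle is precisely to rule out $N = H_F^2$ and to verify that $\Xi$ is not a finite Blaschke-Potapov product: the hypothesis on $\Phi$ gives only partial-isometry information on $K_\Theta$, so the inner--outer factorization of $\Phi\Theta$, which in Proposition~\ref{proposition4.1} simplified because $\Phi\Theta$ was itself inner, must be analyzed carefully to transport the Blaschke-Potapov hypothesis on $\Theta$ into the corresponding statement about the inner factor $\Xi$. I expect this inner--outer bookkeeping, together with the passage between ``$\Phi\Theta$ inner'' and ``$T_\Phi|_{K_\Theta}$ isometric'', to be the genuine difficulty.
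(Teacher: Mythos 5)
You should know at the outset that this statement is stated in the paper as a \emph{conjecture}: the authors do not prove it, and the only case they settle is $\Phi$ inner (Proposition~\ref{proposition4.1}(iv)), where the explicit decomposition $M^{\perp}=R(T_{\Phi\Theta})\oplus K_{\Phi}$ makes $M^{\perp}$ automatically infinite dimensional. The parts of your proposal that are actually proved are correct and match what the paper already records: since $T_{\Phi}(I_{E_1}-T_{\Theta}T_{\Theta}^{\ast})$ is a partial isometry with initial space $K_{\Theta}$, the restriction $T_{\Phi}|_{K_{\Theta}}$ is isometric, so $\dim M=\dim K_{\Theta}$, and $K_{\Theta}$ is finite dimensional exactly when $\Theta$ is a finite Blaschke--Potapov product; this gives the ``only if'' direction and half of the ``if'' direction, as the authors themselves note when they write that ``$M$ is infinite dimensional if and only if $\Theta$ is not a finite Blaschke-Potapov product.''

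The genuine gap is that all three of your routes terminate precisely where the open problem begins, namely at showing $\dim M^{\perp}=\infty$ for non-inner $\Phi$, and none supplies a mechanism to cross that point. In route 1, after discarding the easy case $\dim N(T_{\Phi}^{\ast})=\infty$, you are left with ``exhibit infinitely many independent $f\in R(T_{\Phi})^{-}$ with $T_{\Phi^{\ast}}f\in\Theta H_{E}^{2}$'' and no tool to produce even one; note also that for $\Phi\in H_{B(E_1,F)}^{2}$ the operator $T_{\Phi^{\ast}}$ is in general unbounded, so the identity $M^{\perp}=T_{\Phi^{\ast}}^{-1}(\Theta H_{E}^{2})$ itself needs the bilinear-form interpretation the paper uses for unbounded symbols, which you do not carry out. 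Route 2 is circular: applying Corollary~\ref{equivalent}, Lemma~\ref{duality} and Theorem~\ref{main} to $M^{\perp}$ yields $M^{\perp}=T_{\Phi'}K_{\Theta'}$ with $\dim M^{\perp}=\dim K_{\Theta'}$, so ``exclude $\Theta'$ being a finite Blaschke--Potapov product'' is exactly the conjecture restated for the dual subspace, and your claim that the alternative $M^{\perp}=R(T_{\Theta'})$ is ``automatically infinite dimensional'' silently assumes $M^{\perp}\neq\{0\}$, i.e.\ it assumes $T_{\Phi}K_{\Theta}\neq H_{F}^{2}$, which is itself an instance of what must be proved. Route 3 correctly identifies the $S_F$-invariant hull $N=\left(M+\Phi\Theta H_{E}^{2}\right)^{-}$ via the commutator identity from the proof of Theorem~\ref{main2}, but ruling out $N=H_{F}^{2}$ (equivalently, that the Beurling--Lax--Halmos inner function $\Xi$ has infinite-dimensional $K_{\Xi}$) is again the full strength of the conjecture, as you yourself concede in your final sentence. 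In short, the proposal is an honest research plan that reaches the same wall the authors reached --- there is no proof in the paper to compare against, and there is no proof here either.
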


It follows from Lemma \ref{lem1} (iv) that we can characterize a $S_{F}%
$-almost invariant subspace as the kernel of $H_{\Theta^{\ast}}T_{\Phi}^{\ast
}$ (when $T_{\Phi}^{\ast}$ is unbounded, we simply interpret $\ker
H_{\Theta^{\ast}}T_{\Phi}^{\ast}$ as $\left[  R(T_{\Phi}H_{\Theta^{\ast}%
}^{\ast})\right]  ^{\perp}$).

\begin{corollary}
Under the same notation as in Theorem \ref{main}, a closed subspace
$M$ of $H_{F}^{2}$ is $S_{F}$-almost invariant if and only if $M=N(T^{\ast
})=\left[  R(T)\right]  ^{\perp},$ where $T=T_{\Phi}\left(  I_{E_{1}%
}-T_{\Theta}T_{\Theta}^{\ast}\right)  .$ In this case, $\varsigma(S_{F},$
$N(T^{\ast}))=\varsigma(S_{F}^{\ast},R(T)).$
\end{corollary}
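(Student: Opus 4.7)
The plan is to deduce the corollary directly from Theorem \ref{main} combined with the duality part of Lemma \ref{duality}. The main idea is that ``$M$ is $S_F$-almost invariant'' is, via duality, the same as ``$M^\perp$ is $S_F^*$-almost invariant,'' which Theorem \ref{main} pins down as a range of an explicit partial isometry $T$; taking the orthogonal complement then converts this range representation into a kernel (or range-perpendicular) representation of $M$ itself.

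For the ``only if'' direction I would argue as follows. Assume $M$ is $S_F$-almost invariant. By Lemma \ref{duality}(i), $M^\perp$ is $S_F^*$-almost invariant with the same defect. Applying Theorem \ref{main} to $M^\perp$ produces inner and pure $\Theta\in H_{B(E,E_{1})}^{\infty}$ and $\Phi\in H_{B(E_{1},F)}^{2}$ such that $T:=T_{\Phi}\bigl(I_{E_{1}}-T_{\Theta}T_{\Theta}^{\ast}\bigr)$ is a partial isometry and $M^\perp=R(T)$ (case (1) of Theorem \ref{main} is absorbed into this form by allowing the projection $I-T_\Theta T_\Theta^\ast$ to be trivial, so that $T=T_\Theta$). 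Because partial isometries have closed range, $R(T)$ is closed, and therefore
\[
M=(M^\perp)^\perp=[R(T)]^\perp=N(T^{\ast}).
\]

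For the ``if'' direction I reverse the argument. Given $M=N(T^{\ast})=[R(T)]^\perp$ with $T$ of the stated form, the range $R(T)=M^\perp$ is closed and, by Theorem \ref{main}, is $S_F^\ast$-almost invariant. A second application of Lemma \ref{duality}(i) then gives that $M=(M^\perp)^\perp$ is $S_F$-almost invariant. The defect identity is immediate from Lemma \ref{duality}(i) once more: since $N(T^\ast)=[R(T)]^\perp$, the duality lemma yields
\[
\varsigma(S_{F},N(T^{\ast}))=\varsigma(S_{F}^{\ast},[N(T^{\ast})]^\perp)=\varsigma(S_{F}^{\ast},R(T)).
\]

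There is essentially no obstacle; the only item that requires care is the bookkeeping that unifies cases (1) and (2) of Theorem \ref{main} into a single expression $T=T_{\Phi}\bigl(I_{E_{1}}-T_{\Theta}T_{\Theta}^{\ast}\bigr)$, and the check that $R(T)$ is closed so that $[R(T)]^\perp=N(T^\ast)$ without having to pass to the closure. Both are routine, so the corollary really is just a clean repackaging of Theorem \ref{main} through the $M\leftrightarrow M^\perp$ duality.
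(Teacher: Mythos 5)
Your proposal is correct and takes essentially the same route as the paper, which offers no separate proof but presents the corollary as an immediate consequence of Theorem \ref{main} combined with the duality of Lemma \ref{duality}(i) (cf.\ Lemma \ref{lem1}(iv)), with the closedness of $R(T)$ guaranteed, exactly as you note, by $T=T_{\Phi}\left(I_{E_{1}}-T_{\Theta}T_{\Theta}^{\ast}\right)$ being a partial isometry (the paper records this right after the statement via $I_{F}-TT^{\ast}$ being the projection onto $N(T^{\ast})$). Your explicit absorption of case (1) of Theorem \ref{main} into the single expression for $T$ is harmless bookkeeping that the paper leaves implicit.
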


Since $T=T_{\Phi}\left(  I_{E_{1}}-T_{\Theta}T_{\Theta}^{\ast}\right)  $ is a
partial isometry,
\begin{equation}
I_{F}-TT^{\ast}=I_{F}-T_{\Phi}(I_{E_{1}}-T_{\Theta}T_{\Theta}^{\ast})T_{\Phi
}^{\ast} \label{null}%
\end{equation}
is the projection onto $N(T^{\ast}).$

\begin{corollary}
Under the same notation as in Theorem \ref{main}, a closed subspace
$M$ of $H_{F}^{2}$ is $S_{F}^{\ast}$-almost invariant if and only if either
$M=R(T_{\Theta})$ or $M$ is a reproducing kernel space whose kernel
$K_{M}(z,w)$ is of the form%
\begin{equation}
K_{M}(z,w)=\frac{\Phi(z)(I_{E_{1}}-\Theta(z)\Theta(w)^{\ast})\Phi(w)^{\ast}%
}{1-z\overline{w}}. \label{nullka}%
\end{equation}
Similarly, a closed subspace $M$ of $H_{F}^{2}$ is $S_{F}$-almost invariant if
and only if either $M=K_{\Theta}$ or $M$ is a reproducing kernel space whose
kernel is of the form%
\begin{equation}
K_{M^{\perp}}(z,w)=\frac{I_{F}-\Phi(z)(I_{E_{1}}-\Theta(z)\Theta(w)^{\ast
})\Phi(w)^{\ast}}{1-z\overline{w}}. \label{nullk}%
\end{equation}

\end{corollary}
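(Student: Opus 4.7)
The plan is to reduce the statement about $S_F^{\ast}$-almost invariance to the two alternatives produced by Theorem \ref{main}: either $M = R(T_{\Theta})$ (the first case in the corollary), or $M = R(T)$ with $T := T_{\Phi}(I_{E_{1}} - T_{\Theta}T_{\Theta}^{\ast})$ a partial isometry, $\Theta$ inner and pure, $\Phi \in H^{2}_{B(E_{1},F)}$. Only the second case requires work. Since $T$ is a partial isometry, $TT^{\ast}$ is the orthogonal projection onto $R(T) = M$, and because $I_{E_{1}} - T_{\Theta}T_{\Theta}^{\ast}$ is self-adjoint and idempotent, a direct expansion gives
\begin{equation*}
P_{M} = TT^{\ast} = T_{\Phi}(I_{E_{1}} - T_{\Theta}T_{\Theta}^{\ast})T_{\Phi}^{\ast},
\end{equation*}
which is precisely the expression in (\ref{null}).

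Next I would recover the reproducing kernel of $M \subset H_{F}^{2}$ from $K_{M}(z,w)\,y = (P_{M} k_{w,y})(z)$, where $k_{w,y}(z) = y/(1 - z\overline{w})$ is the Szeg\H{o} kernel of $H_{F}^{2}$ paired with $y \in F$. Using the standard reproducing identity $T_{\Xi}^{\ast} k_{w,y} = k_{w,\, \Xi(w)^{\ast}y}$ for an analytic symbol $\Xi$ (which holds even for unbounded $T_{\Xi}$, as it may be checked against polynomials via the bilinear-form interpretation recalled in the introduction), I compute in three steps: first $T_{\Phi}^{\ast} k_{w,y}(z) = \Phi(w)^{\ast}y/(1 - z\overline{w})$; then $(I_{E_{1}} - T_{\Theta}T_{\Theta}^{\ast})$ applied to this gives $(I_{E_{1}} - \Theta(z)\Theta(w)^{\ast})\Phi(w)^{\ast}y/(1 - z\overline{w})$; and finally $T_{\Phi}$ multiplies by $\Phi(z)$ to produce
\begin{equation*}
K_{M}(z,w)\,y = \frac{\Phi(z)(I_{E_{1}} - \Theta(z)\Theta(w)^{\ast})\Phi(w)^{\ast}\,y}{1 - z\overline{w}},
\end{equation*}
which is the formula (\ref{nullka}).

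For the $S_{F}$-almost invariant assertion, I would apply Corollary \ref{equivalent} together with Lemma \ref{duality}(i): $M$ is $S_{F}$-almost invariant if and only if $M^{\perp}$ is $S_{F}^{\ast}$-almost invariant. Applying the first half of the corollary to $M^{\perp}$, the case $M^{\perp} = R(T_{\Theta})$ yields $M = H_{F}^{2} \ominus R(T_{\Theta}) = K_{\Theta}$, while the second case gives that $M^{\perp}$ has reproducing kernel of the form (\ref{nullka}). Subtracting this from the Szeg\H{o} kernel of $H_{F}^{2}$ then yields
\begin{equation*}
\frac{I_{F} - \Phi(z)(I_{E_{1}} - \Theta(z)\Theta(w)^{\ast})\Phi(w)^{\ast}}{1 - z\overline{w}}
\end{equation*}
as the reproducing kernel of $M$, matching (\ref{nullk}). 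The entire argument is a routine application of Theorem \ref{main} and the reproducing-kernel formalism; the only mild subtlety, and what I would consider the main point to verify carefully, is the justification that $T_{\Phi}^{\ast}$ reproduces $k_{w,\,\Phi(w)^{\ast}y}$ even when $\Phi$ lies in $H^{2}_{B(E_{1},F)}$ rather than $H^{\infty}_{B(E_{1},F)}$, which follows from the fact that $k_{w,y} \in H^{\infty}_{F}$ so the bilinear form of the paper applies directly.
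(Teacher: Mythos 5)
Your proposal is correct and takes essentially the same approach as the paper's proof: since $T=T_{\Phi}\left(I_{E_{1}}-T_{\Theta}T_{\Theta}^{\ast}\right)$ is a partial isometry, $P_{M}=TT^{\ast}$, and applying this to the Szeg\H{o} kernel via $T_{\Xi}^{\ast}k_{w,y}=k_{w,\,\Xi(w)^{\ast}y}$ gives (\ref{nullka}), with the $S_{F}$ case handled by duality and (\ref{null}) exactly as in the paper. Your explicit justification of the reproducing identity when $\Phi\in H_{B(E_{1},F)}^{2}$ (using that $k_{w,y}$ is bounded) is a welcome addition, since the paper dismisses this step as ``a general fact.''
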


\begin{proof}
Set $T:=T_{\Phi}\left(  I_{E_{1}}-T_{\Theta}T_{\Theta}^{\ast}\right)  .$ Since
$T$ is a partial isometry \cite{Fricain}, the reproducing kernel of $R(T)$ is
$TT^{\ast}\left(  k_{w}(z)I_{F}\right)  $, where $k_{w}(z)=1/(1-z\overline
{w})$ is the reproducing kernel of $H^{2}$ \cite{Fricain}. It follows from a
general fact that%
\begin{align*}
TT^{\ast}\left(  k_{w}(z)I_{F}\right)   &  =T_{\Phi}(I_{E_{1}}-T_{\Theta
}T_{\Theta}^{\ast})T_{\Phi}^{\ast}k_{w}(z)I_{F}\\
&  =\frac{\Phi(z)(I_{E}-\Theta(z)\Theta(w)^{\ast})\Phi(w)^{\ast}}%
{1-z\overline{w}}.
\end{align*}
Similarly, by (\ref{null}), the reproducing kernel of $N(T^{\ast})$ is given
by (\ref{nullk}).
\end{proof}

\

We note that the reproducing kernels of $R(T_{\Theta})$ and $K_{\Theta}$ in the
above corollary are respectively ($\Theta\in H_{B(E,F)}^{\infty}$ is inner)
\[
\frac{\Theta(z)\Theta(w)^{\ast}}{1-z\overline{w}}\text{ and }\frac
{I_{F}-\Theta(z)\Theta(w)^{\ast}}{1-z\overline{w}}.
\]
By Corollary \ref{equivalent}, the kernel in (\ref{nullk}) can be represented
as a kernel in (\ref{nullka}) with different $\Phi(z)$ and $\Theta(z),$ it
will be interesting to have a direct proof of this fact.


\section{Invariant subspaces of a finite rank perturbation of the shift
operator}

It has been observed in Proposition 1.3 in \cite{APTT} that an
almost invariant subspace of $T$ on a Banach space is actually an invariant
subspace of $T+T_{0}$, where $T_{0}$ is a finite rank operator.

\begin{lemma}
\textrm{\cite{APTT}} \label{apttl} Let $X$ be a Banach space, $T\in B(X)$ and
$M$ be a closed subspace of $X.$ Then $M$ is $T$-almost invariant
if and if $M$ is $(T+T_{0})$-invariant for some finite rank operator $T_{0}.$
\end{lemma}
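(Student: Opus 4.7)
The plan is to prove the two directions separately. The easy direction is the ``if" direction: if $M$ is $(T+T_{0})$-invariant for some finite rank operator $T_{0}$, then for every $m\in M$ we have $Tm=(T+T_{0})m-T_{0}m\in M+R(T_{0})$, so $TM\subset M+R(T_{0})$, and since $R(T_{0})$ is finite dimensional, $M$ is $T$-almost invariant. This takes one paragraph of the write-up and requires no work beyond what is stated in Definition \ref{def1}.

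For the ``only if" direction, suppose $TM\subset M+W$ for some finite dimensional subspace $W\subset X$. The first reduction is to replace $W$ by a complement of $M\cap W$ inside $W$, so that we may assume $M\cap W=\{0\}$ while still having $TM\subset M+W$. Because $W$ is finite dimensional, $M+W$ is closed and the algebraic direct sum decomposition $M+W=M\oplus W$ gives a continuous linear projection $P:M\oplus W\to W$ defined by $P(m+w)=w$; continuity follows from the closed graph theorem (or from the fact that quotient by a closed finite-codimension subspace on $W$-side is bounded).

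The second step is to extend $P$ to a bounded finite rank operator $\widetilde{P}:X\to W$. Picking a basis $w_{1},\ldots,w_{n}$ of $W$, write $P(x)=\sum_{i=1}^{n}\phi_{i}(x)w_{i}$ for $x\in M\oplus W$, where each $\phi_{i}\in(M\oplus W)^{\ast}$. Apply the Hahn-Banach theorem to each $\phi_{i}$ separately to obtain extensions $\widetilde{\phi}_{i}\in X^{\ast}$, and set $\widetilde{P}(x):=\sum_{i=1}^{n}\widetilde{\phi}_{i}(x)w_{i}$. Then $\widetilde{P}$ is bounded and has finite rank (with range contained in $W$), and it agrees with $P$ on $M\oplus W$.

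Finally define $T_{0}:=-\widetilde{P}T$. This is a finite rank operator on $X$ (its range lies in $W$, which is finite dimensional). For $m\in M$, we have $Tm\in M\oplus W$, so $\widetilde{P}(Tm)=P(Tm)$ is precisely the $W$-component of $Tm$, and hence $(T+T_{0})m=Tm-P(Tm)\in M$. Thus $M$ is $(T+T_{0})$-invariant, completing the proof. The only genuinely delicate point is the Banach space subtlety that there is no orthogonal projection available; this is handled by the Hahn-Banach extension of the coordinate functionals of the finite rank projection $P$, which I expect to be the main (but still routine) obstacle in turning this sketch into a formal proof.
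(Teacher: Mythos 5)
Your proof is correct, but note that the paper itself does not prove Lemma \ref{apttl} at all: it is quoted from \cite{APTT} (Proposition 1.3 there), and your argument is essentially the standard one from that source. The ``if'' direction, the reduction to $M\cap W=\{0\}$, the boundedness of the projection $P:M\oplus W\to W$ along $M$ (closed subspace plus finite-dimensional complement), the Hahn--Banach extension of the coordinate functionals, and the choice $T_{0}=-\widetilde{P}T$ are all sound; the one point worth making explicit is that the $\phi_{i}$ are bounded on $M\oplus W$, which follows from the boundedness of $P$ together with the equivalence of norms on the finite-dimensional $W$. What is more useful for you to see is how your argument relates to what the paper actually does prove: in Section 5 the authors work on a Hilbert space, where the orthogonal projection $P_{M}$ exists, so the extension problem you solved via Hahn--Banach becomes trivial --- the canonical perturbation is simply $T_{0}=-(I-P_{M})TP_{M}$, and Theorem \ref{finiterank} goes further by parametrizing \emph{all} finite rank $T_{0}$ making $M$ invariant under $T+T_{0}$, namely $T_{0}=-(I-P_{M})TP_{M}+\sum_{i}x_{i}\otimes y_{i}+\sum_{j}u_{j}\otimes v_{j}$ with $x_{i}\in M$, $v_{j}\in M^{\perp}$. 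Your construction is the Banach-space analogue of the single canonical choice (in Hilbert space, with $W\perp M$, your $\widetilde{P}$ is just $P_{W}$ and your $T_{0}$ agrees with the canonical one on $M$); the Hilbert-space structure buys the uniqueness-free orthogonal projection and the complete classification, while your Hahn--Banach route buys generality at the cost of a non-canonical (extension-dependent) $T_{0}$ and no description of the full family of admissible perturbations.
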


It turns out that if $X$ is a Hilbert space, for a given $M,$ we can write
down all those $T_{0}$ such that $M$ is $(T+T_{0})$-invariant.
Then $M$ is $T$-invariant for $T\in
B(H)$ if and only if $TP_{M}-P_{M}TP_{M}=0$ and $M$ is $T$-reducing if and
only if $TP_{M}-TP_{M}=0.$ This inspires another equivalent notion of almost
invariance which appeared in literature much earlier, see for example
\cite{Hoffman}. To distinguish we temporarily give it a different term.

\begin{definition}
\label{def2}\textrm{\cite{Hoffman}} Let $T\in B(H)$ and $M$ be a closed
subspace of $H.$ Then we say that $M$ is $T$-finite rank
invariant if $TP_{M}-P_{M}TP_{M}$ is of finite rank and that $M$
is $T$-finite rank reducing if $TP_{M}-P_{M}T$ is of finite rank.
\end{definition}

It is known that the notion of $T$-finite rank invariant is equivalent to\ the
notion of $T$-almost invariant. We write down a proof to illustrate some points.

\begin{lemma}
Let $T\in B(H)$ and $M$ be a closed subspace of $H.$ Then $M$ is
$T$-finite rank invariant if and only if $M$ is $T$-almost invariant.
Similarly, $M$ is $T$-finite rank reducing if and only if $M$ is $T$-almost reducing.
\end{lemma}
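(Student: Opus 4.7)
The plan is to prove the invariant equivalence directly and then deduce the reducing equivalence from it by splitting the commutator $TP_M - P_M T$ into two pieces with orthogonal ranges.

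For the forward direction of the invariant case, I would take the minimal orthogonal defect space $W$ of $M$, so that $TM \subset M \oplus W$ with $W \perp M$. Since $W \perp M$, applying $TP_M$ to any vector of $H$ lands in $M \oplus W$ and its $M$-component is $P_M T P_M$ acting on that vector; hence $TP_M - P_M T P_M = P_W T P_M$ has rank at most $\dim W$. For the converse, the key observation is that $TP_M - P_M T P_M = (I - P_M) T P_M$ automatically has range inside $M^\perp$; setting $W$ equal to this finite-dimensional range, one gets $Tm - P_M T m \in W$ for every $m \in M$, whence $TM \subset M + W$. In particular, the passage between the two notions does not inflate the defect, and $W$ so produced is the minimal orthogonal defect space.

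For the reducing equivalence, I would decompose
\[
TP_M - P_M T = (I - P_M) T P_M - P_M T (I - P_M),
\]
whose two summands have ranges in $M^\perp$ and $M$ respectively, hence orthogonal. Finite rank of the left side is therefore equivalent to finite rank of each summand, i.e., to both $(I - P_M) T P_M = TP_M - P_M T P_M$ and $P_M T (I - P_M) = T P_{M^\perp} - P_{M^\perp} T P_{M^\perp}$ being of finite rank; in other words, to both $M$ and $M^\perp$ being $T$-finite rank invariant. Applying the first equivalence to each converts this to $M$ and $M^\perp$ both being $T$-almost invariant, which is precisely the definition of $M$ being $T$-almost reducing.

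This is a short bookkeeping exercise with no real obstacle. The only mild subtlety worth flagging is to exploit the automatic inclusion $R(TP_M - P_M T P_M) \subset M^\perp$ in the converse of the invariant case; this explains why the minimal \emph{orthogonal} defect space of Definition \ref{def1} is the natural object to pair with the finite-rank-invariant formulation, and why the commutator splitting in the reducing case produces two orthogonal pieces on the nose.
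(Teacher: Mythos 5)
Your proposal is correct and takes essentially the same approach as the paper: both directions of the invariant case rest on the identity $TP_{M}-P_{M}TP_{M}=(I-P_{M})TP_{M}$, whose range automatically lies in $M^{\perp}$, and your commutator splitting $TP_{M}-P_{M}T=(I-P_{M})TP_{M}-P_{M}T(I-P_{M})$ is the paper's decomposition $W_{1}-W_{2}^{\ast}$ in disguise, since $P_{M}T(I-P_{M})=\left[(I-P_{M})T^{\ast}P_{M}\right]^{\ast}$. The only cosmetic difference is that in the reducing case you pass through almost invariance of both $M$ and $M^{\perp}$ under $T$, whereas the paper uses the equivalent formulation that $M$ is almost invariant under both $T$ and $T^{\ast}$.
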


\begin{proof}
Assume $W:=TP_{M}-P_{M}TP_{M}=(I-P_{M})TP_{M}$ is of finite rank. Then for any
$h\in M,$ $Th=P_{M}Th+Wh\in M+R(W).$ That is, $TM\subset M+R(W).$ So $R(W)$ is
a finite dimensional defect space of $M$ and $M$ is $T$-almost invariant. In
fact, $R(W)\perp M$ since $P_{M}W=P_{M}(I-P_{M})TP_{M}=0$. So $R(W)$ is the
minimal\ orthogonal defect space of $M.$

On the other hand, assume $M$ is $T$-almost invariant. That is, $TM\subset
M\oplus G,$ where $G$ is finite dimensional. Set $W:=TP_{M}-P_{M}TP_{M}.$ It
is clear that $W|M^{\perp}=0.$ For $h\in M,$ $Wh=Th-P_{M}Th=(I-P_{M})Th\in
M\oplus G.$ Thus $Wh\in G$ since $Wh\perp M.$ This proves $R(W)\subset G$ and
$W$ is a finite rank operator. Hence $M$ is $T$-finite rank invariant.

Similarly, assume $W:=TP_{M}-P_{M}T$ is of finite rank. Then, $TM\subset
M+R(W)$ and $T^{\ast}M\subset M+R(W^{\ast}).$ Hence $M$ is
$T$-almost reducing.

On the other hand, assume $M$ is $T$-almost reducing. By what we have just
proved, $W_{1}:=(I-P_{M})TP_{M}$ and $W_{2}:=(I-P_{M})T^{\ast}P_{M}$ are both
of finite rank. Then%
\[
TP_{M}-P_{M}T=W_{1}-W_{2}^{\ast}%
\]
is of finite rank. Hence $M$ is $T$-finite rank reducing.
\end{proof}

\begin{theorem}
\label{finiterank}Let $T\in B(H)$ and $M$ be a closed subspace of $H.$ Assume
$M$ is $T$-almost invariant. Then $M$ is $(T+T_{0})$-invariant for some finite
rank operator $T_{0}$ if and only if
\begin{equation}
T_{0}=-(I-P_{M})TP_{M}+%
{\textstyle\sum_{i=1}^{k}}
x_{i}\otimes y_{i}+%
{\textstyle\sum_{j=1}^{m}}
u_{j}\otimes v_{j} ,\label{t01}%
\end{equation}
where $x_{i}\in M$ and $y_{i}\in H$, $u_{j}\in H$ and $v_{j}\in M^{\perp}$ are arbitrary.

Similarly, assume $M$ is $T$-almost reducing. Then $M$ is $(T+T_{0})$-reducing
for some finite rank operator $T_{0}$ if and only if
\[
T_{0}=-(I-P_{M})TP_{M}-P_{M}T(I-P_{M})+%
{\textstyle\sum_{i=1}^{k}}
x_{i}\otimes y_{i}+%
{\textstyle\sum_{j=1}^{m}}
u_{j}\otimes v_{j}%
\]
where $x_{i},y_{i}\in M$ and $u_{j},v_{j}\in M^{\perp}$ are arbitrary.
\end{theorem}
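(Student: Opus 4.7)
My plan is to reformulate invariance as a block-matrix condition with respect to the decomposition $H=M\oplus M^{\perp}$ and then parametrize all valid choices for the ``free'' blocks of $T_{0}$. The starting observation is that $M$ is $(T+T_{0})$-invariant exactly when $(I-P_{M})(T+T_{0})P_{M}=0$, which is the single constraint
$$
(I-P_{M})T_{0}P_{M}=-(I-P_{M})TP_{M}.
$$
The right-hand side is a fixed finite-rank operator by the preceding lemma, since $M$ is $T$-almost invariant.

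Next I would split $T_{0}$ into its four block components $P_{M}T_{0}P_{M}$, $P_{M}T_{0}(I-P_{M})$, $(I-P_{M})T_{0}P_{M}$, and $(I-P_{M})T_{0}(I-P_{M})$. The constraint pins down only the lower-left block $(I-P_{M})T_{0}P_{M}=-(I-P_{M})TP_{M}$; the other three blocks may be any finite-rank operators respecting the block support. Combining the upper two blocks into $P_{M}T_{0}$, a finite-rank operator with range in $M$, the standard rank-one decomposition writes it as $\sum_{i=1}^{k}x_{i}\otimes y_{i}$ with $x_{i}\in M$ (a basis of the range) and $y_{i}\in H$ (the associated Riesz representatives). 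The lower-right block $(I-P_{M})T_{0}(I-P_{M})$ has range in $M^{\perp}$ and vanishes on $M$, hence decomposes as $\sum_{j=1}^{m}u_{j}\otimes v_{j}$ with $u_{j}\in M^{\perp}\subset H$ and $v_{j}\in M^{\perp}$. Summing the three pieces produces the claimed form, proving the ``only if'' direction.

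For the ``if'' direction I would verify the formula directly using the identities $(x\otimes y)P_{M}=x\otimes P_{M}y$ and $(I-P_{M})(x\otimes y)=((I-P_{M})x)\otimes y$. These force $(I-P_{M})(x_{i}\otimes y_{i})P_{M}=0$ because $x_{i}\in M$, and $(u_{j}\otimes v_{j})P_{M}=0$ because $v_{j}\in M^{\perp}$, so $(I-P_{M})T_{0}P_{M}=-(I-P_{M})TP_{M}$ and $(I-P_{M})(T+T_{0})P_{M}=0$. The operator $T_{0}$ is of finite rank since each of its three summands is.

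The reducing case follows by the same bookkeeping with the extra requirement $P_{M}(T+T_{0})(I-P_{M})=0$, which fixes a second block as $P_{M}T_{0}(I-P_{M})=-P_{M}T(I-P_{M})$ (finite rank, again by the preceding lemma). Now only the diagonal blocks $P_{M}T_{0}P_{M}$ and $(I-P_{M})T_{0}(I-P_{M})$ remain free, and their supports force $x_{i},y_{i}\in M$ for the first sum and $u_{j},v_{j}\in M^{\perp}$ for the second. I do not anticipate any real obstacle here: once one passes to the projection reformulation, the theorem becomes a transparent block-matrix parametrization, and the only care needed is to record the support of the rank-one terms correctly so as to match the asymmetric convention $y_{i}\in H$ versus $v_{j}\in M^{\perp}$ in the invariant case.
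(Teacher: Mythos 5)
Your proposal is correct and takes essentially the same route as the paper: the paper's proof likewise reduces invariance to the single constraint $(I-P_{M})(T+T_{0})P_{M}=0$ (via the lemma identifying almost invariance with finite rank of $W:=(I-P_{M})TP_{M}$) and then splits $T_{0}+W=(T_{0}+W)P_{M}+T_{0}(I-P_{M})$, which is your block parametrization with the only cosmetic difference that the cross block $P_{M}T_{0}(I-P_{M})$ is absorbed into the $\sum u_{j}\otimes v_{j}$ term rather than into $\sum x_{i}\otimes y_{i}$ as in your grouping. The ``if'' verification and the treatment of the reducing case coincide with the paper's as well.
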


\begin{proof}
Set $W:=(I-P_{M})TP_{M},$ $W_{1}=%
{\textstyle\sum_{i=1}^{k}}
x_{i}\otimes y_{i}$ and $W_{2}=%
{\textstyle\sum_{j=1}^{m}}
u_{j}\otimes v_{j}.$ Assume $T_{0}=-W+W_{1}+W_{2}.$ Then for $h\in M,$%
\begin{align*}
\left(  T+T_{0}\right)  h  &  =Th-\left(  TP_{M}-P_{M}TP_{M}\right)  h+%
{\textstyle\sum_{i=1}^{k}}
\left\langle h,y_{i}\right\rangle x_{i}\\
&  =P_{M}TP_{M}h+%
{\textstyle\sum_{i=1}^{k}}
\left\langle h,y_{i}\right\rangle x_{i}\in M.
\end{align*}
This proves ``if" direction.

Now assume $T_{0}$ is a finite rank operator such that $M$ is $(T+T_{0}%
)$-invariant. Then for $h\in M,$
\[
\left(  T_{0}+W\right)  h=T_{0}h+\left(  TP_{M}-P_{M}TP_{M}\right)  h=\left(
T+T_{0}\right)  h-P_{M}TP_{M}h\in M.
\]
Hence $\left(  T_{0}+W\right)  P_{M}$ is a finite rank operator and $R\left[
\left(  T_{0}+W\right)  P_{M}\right]  \subset M.$ Therefore, $\left(
T_{0}+W\right)  P_{M}=W_{1}$ for some $W_{1}$ of the form $%
{\textstyle\sum_{i=1}^{k}}
x_{i}\otimes y_{i}$. Note that
\[
\left(  T_{0}+W\right)  (I-P_{M})=T_{0}(I-P_{M})=W_{2}=%
{\textstyle\sum_{j=1}^{m}}
u_{j}\otimes v_{j}
\]
for some $W_{2}$ of the form $%
{\textstyle\sum_{j=1}^{m}}
u_{j}\otimes v_{j}.$ Thus%
\[
\left(  T_{0}+W\right)  =\left(  T_{0}+W\right)  P_{M}+T_{0}(I-P_{M}%
)=W_{1}+W_{2}.
\]
This proves (\ref{t01}).

Next we prove the reducing case. The ``if" direction comes from a
direct verification. We prove the ``only if" direction. Assume $T_{0}$ is a
finite rank operator such that $M$ is $(T+T_{0})$-reducing. Set
\[
Q:=T_{0}+(I-P_{M})TP_{M}+P_{M}T(I-P_{M}).
\]
Since $M$ is $T$-almost reducing, both $(I-P_{M})TP_{M}$ and $P_{M}T(I-P_{M})$
are of finite rank. Then for $h\in M,$%
\[
Qh=(T_{0}+T)h-P_{M}Th\in M,
\]
and for $h_{1}\in M^{\perp}$%
\[
Qh_{1}=T_{0}h_{1}+P_{M}Th_{1}=(T_{0}+T)h_{1}-(I-P_{M})Th_{1}\in M^{\perp}.
\]
Thus $QM\subset M$ and $QM^{\perp}\subset M^{\perp}.$ Consequently,
\[
Q=P_{M}QP_{M}+(I-P_{M})Q(I-P_{M})=%
{\textstyle\sum_{i=1}^{k}}
x_{i}\otimes y_{i}+%
{\textstyle\sum_{j=1}^{m}}
u_{j}\otimes v_{j}%
\]
where $x_{i},y_{i}\in M$ and $u_{j},v_{j}\in M^{\perp}.$ The proof is complete.
\end{proof}

\

For essentially invariant subspaces, its first appearance was in Brown and
Pearcy \cite{BrownPearcy} in 1971 where it was proved that every operator on a
complex infinite dimensional Hilbert space admits an essentially invariant
subspace. Here is the definition used in \cite{BrownPearcy} \cite{Hoffman}.

\begin{definition}
\label{defBP}\textrm{\cite{BrownPearcy} \cite{Hoffman}} Let $T\in B(H)$ and
$M$ be a closed subspace of $H.$ Then we say that $M$ is $T$-BP
essentially invariant if $TP_{M}-P_{M}TP_{M}$ is a compact operator
and that $M$ is $T$-BP essentially reducing if $TP_{M}-P_{M}T$ is
a compact operator.
\end{definition}

Inspired by Lemma \ref{apttl}, Sirotkin and Wallis \cite{Sirotkin2} gave the
following definition on a Banach space, but we state it on a Hilbert space.

\begin{definition}
\label{defSW}\textrm{\cite{Sirotkin2}}  Let $T\in B(H)$ and $M$ be a closed
subspace of $H.$ Then we say that $M$ is $T$-SW essentially
invariant if $M$ is $(T+T_{0})$-invariant for some compact operator $T_{0}$
and that  $M$ is $T$-SW essentially reducing if $M$ is
$(T+T_{0})$-reducing for some compact operator $T_{0}.$
\end{definition}

Inspired by Definition \ref{def1}, we can make the following definition.

\begin{definition}
\label{defG}\textrm{\cite{Sirotkin2}} Let $T\in B(H)$ and $M$ be a closed
subspace of $H.$ Then we say that $M$ is $T$-essentially
invariant if there exists a compact operator $G$ such that $TM\subset M+R(G)$
and that  $M$ is $T$-essentially reducing if both $M$ and
$M^{\perp}$ are $T$-essentially invariant.
\end{definition}

As expected, these three definitions are equivalent and we give a proof for clarity.

\begin{lemma}
\label{lem58} Let $T\in B(H)$ and $M$ be a closed subspace of $H.$
Then $M$ is $T$-BP essentially invariant if and only if $M$ is
$T$-SW essentially invariant if and only if $T$-essentially invariant. A
similar statement holds for essentially reducing subspaces.
\end{lemma}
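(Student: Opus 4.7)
The plan is to prove the three‐way equivalence for invariance via the cycle (Def.~\ref{defBP}) $\Rightarrow$ (Def.~\ref{defSW}) $\Rightarrow$ (Def.~\ref{defG}) $\Rightarrow$ (Def.~\ref{defBP}), and then deduce the reducing version by applying the invariant case to both $M$ and $M^{\perp}$. The first two implications mirror the finite-rank bookkeeping of Theorem~\ref{finiterank}. For (Def.~\ref{defBP}) $\Rightarrow$ (Def.~\ref{defSW}), set $T_{0}:=-(I-P_{M})TP_{M}$, which is compact by hypothesis, and observe that for $h\in M$ one has $(T+T_{0})h=P_{M}Th\in M$. For (Def.~\ref{defSW}) $\Rightarrow$ (Def.~\ref{defG}), if $(T+T_{0})M\subset M$ with $T_{0}$ compact, write $Th=(T+T_{0})h-T_{0}h\in M+R(T_{0})$ and take $G:=-T_{0}$.

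The one substantive step is (Def.~\ref{defG}) $\Rightarrow$ (Def.~\ref{defBP}). From $TM\subset M+R(G)$ one obtains
\[
R\bigl((I-P_{M})TP_{M}\bigr)\subset R\bigl((I-P_{M})G\bigr),
\]
and the right-hand side is the range of a compact operator. The crux of the argument is to promote this range inclusion to compactness of $(I-P_{M})TP_{M}$ itself; I would invoke Douglas's range inclusion lemma to factor $(I-P_{M})TP_{M}=(I-P_{M})G\cdot C$ for some bounded $C$, after which compactness is automatic (compact times bounded is compact). Since $(I-P_{M})TP_{M}=TP_{M}-P_{M}TP_{M}$, this is exactly Definition~\ref{defBP}.

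For the reducing version, the identity
\[
TP_{M}-P_{M}T=(I-P_{M})TP_{M}-P_{M}T(I-P_{M})
\]
displays $TP_{M}-P_{M}T$ as a difference of two operators with orthogonal ranges (in $M^{\perp}$ and $M$ respectively), so it is compact if and only if both summands are compact, i.e., if and only if $M$ and $M^{\perp}$ are each BP essentially invariant. The SW‐reducing and essentially‐reducing definitions are already symmetric in $M$ and $M^{\perp}$, so the three reducing equivalences follow from the invariant case; the only remark needed is that for BP $\Rightarrow$ SW one consolidates the two invariant compact perturbations into a single $T_{0}:=-(I-P_{M})TP_{M}-P_{M}T(I-P_{M})$, giving $T+T_{0}=P_{M}TP_{M}+(I-P_{M})T(I-P_{M})$, which manifestly preserves both $M$ and $M^{\perp}$. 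The main obstacle is thus the single range-inclusion-to-compactness step, where Douglas's lemma is essential; everything else is formal.
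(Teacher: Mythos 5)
Your proof is correct and takes essentially the same route as the paper: the same cycle (BP $\Rightarrow$ SW $\Rightarrow$ essentially invariant $\Rightarrow$ BP) with the identical perturbation $T_{0}=-(I-P_{M})TP_{M}$ and the same key range inclusion $R\bigl((I-P_{M})TP_{M}\bigr)\subset R\bigl((I-P_{M})G\bigr)$. The only differences are presentational: you explicitly invoke Douglas's factorization lemma to pass from range inclusion to compactness, a step the paper asserts without citation, and you spell out the reducing case, which the paper dismisses as ``similar.''
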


\begin{proof}
Assume that $M$ is $T$-BP essentially invariant. Then $(I-P_{M})TP_{M}$ is compact
and $M$ is $\left[  T-(I-P_{M})TP_{M}\right]  $-invariant. Thus $M$ is $T$-SW
essentially invariant.

Assume $M$ is $T$-SW essentially invariant. That is $M$ is $\left[
T+T_{0}\right]  $-invariant for some compact operator $T_{0}.$ Thus $\left[
T+T_{0}\right]  M\subset M.$ Then it is easy to see that $TM\subset
M+R(T_{0}).$ Thus $M$ is $T$-essentially invariant.

Assume $M$ is $T$-essentially invariant. Let $G$ be a compact
operator such that $TM\subset M+R(G).$ Then%
\[
TM\subset M+R(G)=M+R(P_{M}G+(I-P_{M})G)\subset M\oplus R((I-P_{M})G).
\]
Set $W:=(I-P_{M})TP_{M}.$ Then $W|M^{\perp}=0$ and $R(W)\subset M^{\perp}.$
For $h\in M,$%
\[
Wh=Th-P_{M}Th\subset M\oplus R((I-P_{M})G).
\]
Therefore, $R(W)\subset R((I-P_{M})G)$. Since $(I-P_{M})G$ is compact, $W$ is
compact. This proves $M$ is $T$-BP essentially invariant. The proof for the
reducing case is similar.
\end{proof}

\begin{theorem}
Let $T\in B(H)$ and $M$ be a closed subspace of $H.$ Assume $M$ is
$T$-essentially invariant. Then $M$ is $(T+T_{0})$-invariant for some compact
operator $T_{0}$ if and only if
\begin{equation}
T_{0}=-(I-P_{M})TP_{M}+P_{M}W_{1}+W_{2}(I-P_{M}) ,\label{t03}%
\end{equation}
where $W_{1},W_{2}\in B(H)$ are two arbitrary compact operators.

Similarly, assume $M$ is $T$-essentially reducing. Then $M$ is $(T+T_{0}%
)$-reducing for compact operator $T_{0}$ if and only if
\[
T_{0}=-(I-P_{M})TP_{M}-P_{M}T(I-P_{M})+P_{M}W_{1}P_{M}+(I-P_{M})W_{2}%
(I-P_{M}),
\]
where $W_{1},W_{2}\in B(H)$ are two arbitrary compact operators.
\end{theorem}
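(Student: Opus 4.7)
The plan is to mirror almost verbatim the proof of Theorem \ref{finiterank} (the finite rank analogue), replacing ``finite rank'' by ``compact'' throughout, and using that the compact operators form a two-sided ideal of $B(H)$. The key ingredient that makes this substitution legitimate is Lemma \ref{lem58}: when $M$ is $T$-essentially invariant, the operator $W:=(I-P_M)TP_M$ is compact; when $M$ is $T$-essentially reducing, both $W_{M}:=(I-P_M)TP_M$ and $W_{M^\perp}:=P_MT(I-P_M)$ are compact.

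For the invariant case, the ``if'' direction is a direct verification: if $T_{0}=-W+P_{M}W_{1}+W_{2}(I-P_{M})$ with $W_1,W_2$ compact, then $T_0$ is compact and for $h\in M$ one computes
\[
(T+T_{0})h = Th - (I-P_{M})TP_{M}h + P_{M}W_{1}h = P_{M}TP_{M}h + P_{M}W_{1}h \in M.
\]
For the ``only if'' direction, assume $T_0$ is compact with $(T+T_0)M\subset M$. Then for $h\in M$,
\[
(T_{0}+W)h = (T+T_{0})h - P_{M}Th \in M,
\]
so $(T_{0}+W)P_{M}$ has range in $M$, hence equals $P_{M}\cdot\bigl[(T_{0}+W)P_{M}\bigr]$. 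Setting $W_{1}:=(T_{0}+W)P_{M}$, which is compact, gives $(T_{0}+W)P_{M}=P_{M}W_{1}$. On the complement, $W$ vanishes on $M^{\perp}$, so $(T_{0}+W)(I-P_{M})=T_{0}(I-P_{M})=:W_{2}(I-P_{M})$ with $W_{2}:=T_{0}(I-P_{M})$ compact. Adding,
\[
T_{0}+W = P_{M}W_{1}+W_{2}(I-P_{M}),
\]
which is exactly \eqref{t03}.

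For the reducing case, set $Q:=T_{0}+(I-P_{M})TP_{M}+P_{M}T(I-P_{M})$, which is compact by Lemma \ref{lem58}. The ``if'' direction is again a direct check. For the ``only if'' direction, if $M$ is $(T+T_{0})$-reducing, then for $h\in M$,
\[
Qh = (T+T_{0})h - P_{M}Th \in M,
\]
and for $h\in M^{\perp}$,
\[
Qh = (T+T_{0})h - (I-P_{M})Th \in M^{\perp},
\]
so $QM\subset M$ and $QM^{\perp}\subset M^{\perp}$. This yields the decomposition $Q=P_{M}QP_{M}+(I-P_{M})Q(I-P_{M})$, so taking $W_{1}=W_{2}=Q$ (both compact) produces the desired form.

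There is no real obstacle; the whole point is that the argument of Theorem \ref{finiterank} uses only the ideal property of the finite rank operators, and the same argument works for any two-sided ideal in $B(H)$, compact operators included. The only small care is to verify that the quantities $(T_{0}+W)P_{M}$, $T_{0}(I-P_{M})$, and $Q$ constructed in the necessity argument are compact, which is immediate from compactness of $T_0$, $(I-P_M)TP_M$, and $P_MT(I-P_M)$ together with the fact that compactness is preserved by composition with bounded operators and by finite sums.
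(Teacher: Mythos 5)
Your proposal is correct and follows essentially the same route as the paper: the ``if'' direction by direct verification, and the ``only if'' direction by showing $(T_{0}+W)P_{M}$ has range in $M$ (respectively, that $Q:=T_{0}+(I-P_{M})TP_{M}+P_{M}T(I-P_{M})$ leaves both $M$ and $M^{\perp}$ invariant), with Lemma \ref{lem58} supplying compactness of the correction terms. Your observation that the argument of Theorem \ref{finiterank} transfers verbatim because only the two-sided ideal property is used is exactly the point; the paper itself proves the invariant case this way and dismisses the reducing case as ``similar,'' which you have filled in correctly.
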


\begin{proof}
Set $W:=(I-P_{M})TP_{M}.$ By assumption and Lemma \ref{lem58},
$W$ is a compact operator. Assume $T_{0}=-W+P_{M}W_{1}+W_{2}(I-P_{M}),$ where
$W_{1},W_{2}\in B(H)$ are compact. Then for $h\in M,$%
\begin{align*}
\left(  T+T_{0}\right)  h  &  =Th-\left(  TP_{M}-P_{M}TP_{M}\right)
h+P_{M}W_{1}h\\
&  =P_{M}TP_{M}h+P_{M}W_{1}h\in M.
\end{align*}
This proves ``if" direction.

Now assume $T_{0}$ is a compact operator such that $M$ is
$(T+T_{0})$-invariant. Then for $h\in M$,
\[
\left(  T_{0}+W\right)  h=T_{0}h+\left(  TP_{M}-P_{M}TP_{M}\right)  h=\left(
T+T_{0}\right)  h-P_{M}TP_{M}h\in M.
\]
Hence, $P_{M}\left(  T_{0}+W\right)  P_{M}=\left(  T_{0}+W\right)  P_{M}.$
Thus
\begin{align*}
T_{0}+W  &  =\left(  T_{0}+W\right)  P_{M}+\left(  T_{0}+W\right)  (I-P_{M})\\
&  =P_{M}\left(  T_{0}+W\right)  P_{M}+T_{0}(I-P_{M}).
\end{align*}
This proves (\ref{t03}) with $W_{1}=\left(  T_{0}+W\right)  P_{M}$ and
$W_{2}=T_{0}.$

The proof for the reducing case is similar.
\end{proof}

\

By using Theorem \ref{finiterank}, for a $S^{\ast}$-almost invariant subspace
$M$, we can write down all finite rank operators $T_{0}$ such that $M$ is
$(S+T_{0})$-invariant.

\begin{theorem}
Let $M:=R(T_{\Phi}\left(  I_{E_{1}}-T_{\Theta}T_{\Theta}^{\ast}\right)  )$ be
as in Theorem \ref{main}. The following statements hold.

\begin{itemize}
\item[\textrm{(i)}] $M$ is $(S_{F}+T_{0})$-invariant for some finite rank
operator $T_{0}$ if and only if
\begin{equation}
T_{0}=-T_{\Phi}T_{\Theta}P_{E}T_{\Theta}^{\ast}T_{\Phi}^{\ast}+%
{\textstyle\sum_{i=1}^{k}}
x_{i}\otimes y_{i}+%
{\textstyle\sum_{j=1}^{m}}
u_{j}\otimes v_{j}, \label{t0a}%
\end{equation}
where $x_{i}\in M$ and $y_{i}\in H$, $u_{j}\in H$ and $v_{j}\in M^{\perp}$ are arbitrary.

\item[\textrm{(ii)}] $M$ is $(S_{F}^{\ast}+T_{1})$-invariant for some finite
rank operator $T_{1}$ if and only if%
\[
T_{1}=-S_{F}^{\ast}T_{\Phi}P_{E_{1}}\left(  I-T_{\Theta}T_{\Theta}^{\ast
}\right)  T_{\Phi}^{\ast}+%
{\textstyle\sum_{i=1}^{k}}
x_{i}\otimes y_{i}+%
{\textstyle\sum_{j=1}^{m}}
u_{j}\otimes v_{j},
\]
where $x_{i}\in M$ and $y_{i}\in H$, $u_{j}\in H$ and $v_{j}\in M^{\perp}$ are arbitrary.

\item[\textrm{(iii)}] $M$ is $(S_{F}+T_{2})$-reducing for some finite rank
operator $T_{2}$ if and only if%
\[
T_{2}=-T_{\Phi}T_{\Theta}P_{E}T_{\Theta}^{\ast}T_{\Phi}^{\ast}-S_{F}^{\ast
}T_{\Phi}P_{E_{1}}\left(  I-T_{\Theta}T_{\Theta}^{\ast}\right)  T_{\Phi}%
^{\ast}+%
{\textstyle\sum_{i=1}^{k}}
x_{i}\otimes y_{i}+%
{\textstyle\sum_{j=1}^{m}}
u_{j}\otimes v_{j},
\]
where $x_{i},y_{i}\in M$ and $u_{j},v_{j}\in M^{\perp}$ are arbitrary.
\end{itemize}
\end{theorem}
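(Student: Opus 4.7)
The plan is to apply Theorem \ref{finiterank} in each of the three cases, using the explicit form of $P_M$ afforded by the partial isometry $T:=T_\Phi(I_{E_1}-T_\Theta T_\Theta^*)$ supplied by Theorem \ref{main}. Since $T$ is a partial isometry with range $M$, we have $P_M=TT^*=T_\Phi(I-T_\Theta T_\Theta^*)T_\Phi^*$, and the commutation identities (\ref{ts}) and (\ref{defGGG}) from the proofs of Theorems \ref{main} and \ref{main2} will be the main workhorses. Throughout I will use that the class of sums $\{\sum x_i\otimes y_i+\sum u_j\otimes v_j:x_i\in M,\ v_j\in M^\perp\}$ coincides with the set of finite rank operators $F$ satisfying $(I-P_M)FP_M=0$, as established in the proof of the equivalence of the finite-rank-invariant and almost-invariant notions.

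For (ii), Theorem \ref{finiterank} gives that $M$ is $(S_F^*+T_1)$-invariant iff $T_1=-(I-P_M)S_F^*P_M+F$ for some $F$ in that class. Writing $P_Mh=T_\Phi k$ with $k=(I-T_\Theta T_\Theta^*)T_\Phi^*h\in K_\Theta$ and applying (\ref{ts}) gives
\[
S_F^*T_\Phi k=T_\Phi S_{E_1}^*k+S_F^*T_\Phi P_{E_1}k,
\]
whose first summand lies in $M$ because $K_\Theta$ is $S_{E_1}^*$-invariant. Hence $(I-P_M)S_F^*P_M=(I-P_M)S_F^*T_\Phi P_{E_1}(I-T_\Theta T_\Theta^*)T_\Phi^*$, and since the residue $P_MS_F^*T_\Phi P_{E_1}(I-T_\Theta T_\Theta^*)T_\Phi^*$ has range in $M$, it already belongs to the summand class. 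This lets one replace $(I-P_M)S_F^*P_M$ by the full $S_F^*T_\Phi P_{E_1}(I-T_\Theta T_\Theta^*)T_\Phi^*$ at the cost of adjusting $F$, proving (ii).

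I would treat (i) by the same template but with (\ref{defGGG}) in place of (\ref{ts}): for $k\in K_\Theta$,
\[
S_FT_\Phi k=T_\Phi(I-T_\Theta T_\Theta^*)S_{E_1}k+T_\Phi T_\Theta P_E T_\Theta^*S_{E_1}k,
\]
the first summand lying in $M$, so that $(I-P_M)S_FP_M=(I-P_M)T_\Phi T_\Theta P_E T_\Theta^*S_{E_1}(I-T_\Theta T_\Theta^*)T_\Phi^*$. The hard step is to reduce this to the compact form $T_\Phi T_\Theta P_E T_\Theta^*T_\Phi^*$ of (\ref{t0a}) modulo an operator in the summand class. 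For this I would combine the Toeplitz commutator $T_\Theta^*S_{E_1}=S_ET_\Theta^*+P_ET_\Theta^*S_{E_1}$ with the partial-isometry identity $(I-T_\Theta T_\Theta^*)T_\Phi^*T_\Phi(I-T_\Theta T_\Theta^*)=I-T_\Theta T_\Theta^*$ to trade the awkward chain $S_{E_1}(I-T_\Theta T_\Theta^*)T_\Phi^*$ for the plain $T_\Phi^*$, the difference being accounted for by the $\sum x_i\otimes y_i$ and $\sum u_j\otimes v_j$ freedoms.

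Finally for (iii), the reducing case of Theorem \ref{finiterank} gives that $T_2$ qualifies iff $T_2=-(I-P_M)S_FP_M-P_MS_F(I-P_M)+F$ with $F=P_MFP_M+(I-P_M)F(I-P_M)$, which exactly matches the stricter $x_i,y_i\in M$, $u_j,v_j\in M^\perp$ constraint in the statement. The first summand $-(I-P_M)S_FP_M$ is the expression already analyzed in (i). For the second summand, use the duality $P_MS_F(I-P_M)=\bigl((I-P_M)S_F^*P_M\bigr)^*$ and take the adjoint of the (ii)-analysis, absorbing any $M$-ranged correction into the $P_MFP_M$ part; this yields the second operator $-S_F^*T_\Phi P_{E_1}(I-T_\Theta T_\Theta^*)T_\Phi^*$ as written, and combining with (i) completes (iii).
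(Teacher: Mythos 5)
Your part (ii) is correct and is essentially the paper's own argument: with $K:=T_{\Phi}(I_{E_1}-T_{\Theta}T_{\Theta}^{\ast})$ and $P_M=KK^{\ast}$, relation (\ref{add1}) gives $(I-P_M)S_F^{\ast}P_M=(I-P_M)GK^{\ast}$ with $GK^{\ast}=S_F^{\ast}T_{\Phi}P_{E_1}(I-T_{\Theta}T_{\Theta}^{\ast})T_{\Phi}^{\ast}$, and the $M$-ranged residue is absorbed into the $\sum x_i\otimes y_i$ freedom. Your preliminary observation that the summand class is exactly the set of finite rank $F$ with $(I-P_M)FP_M=0$ is also right, and since Theorem \ref{finiterank} says the admissible $T_0$ form the coset of $-(I-P_M)S_FP_M$ modulo this class, everything reduces to identifying $(I-P_M)S_FP_M$ modulo the class --- which is where the problem lies.

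The gap is the step you yourself flag as ``hard'' in (i), and it cannot be closed, because the asserted reduction is false. From $S_{E_1}T_{\Theta}=T_{\Theta}S_E$, $T_{\Theta}^{\ast}T_{\Theta}=I$ and $P_ES_E=0$ one gets the exact identity $T_{\Theta}P_ET_{\Theta}^{\ast}S_{E_1}(I-T_{\Theta}T_{\Theta}^{\ast})=T_{\Theta}P_ET_{\Theta}^{\ast}S_{E_1}$, so that $(I-P_M)S_FP_M=(I-P_M)\,T_{\Phi}T_{\Theta}P_ET_{\Theta}^{\ast}S_{E_1}T_{\Phi}^{\ast}$: the factor $S_{E_1}$ survives, and its removal is not compensated by the class. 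Test case: $E=E_1=F=\mathbb{C}$, $\Theta=z$, $\Phi\equiv 1$, so $K=I-T_zT_z^{\ast}=P_{\mathbb{C}}$ and $M=\mathbb{C}$. Then $(I-P_M)SP_M=z\otimes 1$, whereas the constant in (\ref{t0a}) is $T_zP_{\mathbb{C}}T_z^{\ast}=z\otimes z$; the difference $z\otimes(1-z)$ is not in the class, since every class operator maps $1\in M$ into $M$ while $\bigl(z\otimes(1-z)\bigr)1=z$. Concretely, $T_0=-(z\otimes 1)$ makes $\mathbb{C}$ $(S+T_0)$-invariant, but no operator of the form (\ref{t0a}) does. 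So your sketch, carried out honestly, lands on the constant $-T_{\Phi}T_{\Theta}P_ET_{\Theta}^{\ast}S_{E_1}T_{\Phi}^{\ast}$ rather than the stated one; and you should know the paper's own proof slips at exactly this point, invoking $S_{E_1}(I-T_{\Theta}T_{\Theta}^{\ast})T_{\Phi}^{\ast}=(I-T_{\Theta}T_{\Theta}^{\ast})T_{\Phi}^{\ast}S-T_{\Theta}P_ET_{\Theta}^{\ast}T_{\Phi}^{\ast}$, which is the adjoint of (\ref{add1}) with the stars erased and fails in the same example. A related slip infects (iii): your duality $P_MS_F(I-P_M)=\bigl((I-P_M)S_F^{\ast}P_M\bigr)^{\ast}$ correctly produces $-KG^{\ast}=-T_{\Phi}(I-T_{\Theta}T_{\Theta}^{\ast})P_{E_1}T_{\Phi}^{\ast}S_F$ modulo the block-diagonal class, which is the adjoint of --- and not congruent to --- the stated term $-S_F^{\ast}T_{\Phi}P_{E_1}(I-T_{\Theta}T_{\Theta}^{\ast})T_{\Phi}^{\ast}=-GK^{\ast}$: their difference has nonvanishing corner $(I-P_M)(\,\cdot\,)P_M=(I-P_M)S_F^{\ast}P_M$ whenever $M$ is not $S_F^{\ast}$-invariant. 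Claiming they coincide is an adjoint slip, again mirrored in the paper's final assembly, which writes $GK^{\ast}$ where its own computation produced $KG^{\ast}$.
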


\begin{proof}
Set $K:=T_{\Phi}\left(  I-T_{\Theta}T_{\Theta}^{\ast}\right) $
and $W:=T_{\Phi}T_{\Theta}P_{E}T_{\Theta}^{\ast}S_{E_{1}}.$ Then
(\ref{defGGG}) become $S_{F}K=KS_{E_{1}}+W.$ Since $K$ is a partial isometry,
$P_{M}=KK^{\ast}.$ Thus
\begin{align*}
(I-P_{M})S_{F}P_{M}  &  =(I-KK^{\ast})S_{F}KK^{\ast}\\
&  =(I-KK^{\ast})(KS_{E_{1}}+W)K^{\ast}\\
&  =(I-KK^{\ast})WK^{\ast}=WK^{\ast}-P_{M}WK^{\ast},
\end{align*}
where $(I-KK^{\ast})K=0.$ Since $\Theta$ is inner, we have
\begin{align*}
WK^{\ast}  &  =T_{\Phi}T_{\Theta}P_{E}T_{\Theta}^{\ast}S_{E_{1}}\left(
I-T_{\Theta}T_{\Theta}^{\ast}\right)  T_{\Phi}^{\ast}\\
&  =T_{\Phi}T_{\Theta}P_{E}T_{\Theta}^{\ast}\left(  \left(  I-T_{\Theta
}T_{\Theta}^{\ast}\right)  T_{\Phi}^{\ast}S-T_{\Theta}P_{E}T_{\Theta}^{\ast
}T_{\Phi}^{\ast}\right) \\
&  =T_{\Phi}T_{\Theta}P_{E}T_{\Theta}^{\ast}T_{\Phi}^{\ast}.
\end{align*}
Set $W_{1}:=%
{\textstyle\sum_{i=1}^{k}}
x_{i}\otimes y_{i}$ and $W_{2}:=%
{\textstyle\sum_{j=1}^{m}}
u_{j}\otimes v_{j},$ where $x_{i}\in M$, $y_{i},u_{j}\in H$, $v_{j}\in
M^{\perp}.$ By Theorem \ref{finiterank},
\[
T_{0}=-\left(  WK^{\ast}-P_{M}WK^{\ast}\right)  +W_{1}+W_{2}.
\]
Note that $P_{M}WK^{\ast}$ is a finite rank operator of the same form as
$W_{1}.$ Thus $T_{0}=-WK^{\ast}+W_{1}+W_{2}.$

Next we prove (ii). Set $G:=-T_{\Phi}S_{E_{1}}^{\ast}T_{\Theta}P_{E}T_{\Theta
}^{\ast}+S_{F}^{\ast}T_{\Phi}P_{E_{1}}\left(  I-T_{\Theta}T_{\Theta}^{\ast
}\right) .$ Then (\ref{add1}) becomes $S_{F}^{\ast}K=KS_{E_{1}%
}^{\ast}+G.$ Note that%
\begin{align}
(I-P_{M})S_{F}^{\ast}P_{M}  &  =(I-KK^{\ast})S_{F}^{\ast}KK^{\ast}\nonumber\\
&  =(I-KK^{\ast})\left(  KS_{E_{1}}^{\ast}+G\right)  K^{\ast}\nonumber\\
&  =(I-KK^{\ast})GK^{\ast}=GK^{\ast}-P_{M}GK^{\ast}. \label{starf}%
\end{align}
Since $\Theta$ is inner,%
\begin{align*}
GK^{\ast}  &  =\left[  -T_{\Phi}S_{E_{1}}^{\ast}T_{\Theta}P_{E}T_{\Theta
}^{\ast}+S_{F}^{\ast}T_{\Phi}P_{E_{1}}\left(  I-T_{\Theta}T_{\Theta}^{\ast
}\right)  \right]  \left(  I-T_{\Theta}T_{\Theta}^{\ast}\right)  T_{\Phi
}^{\ast}\\
&  =S_{F}^{\ast}T_{\Phi}P_{E_{1}}\left(  I-T_{\Theta}T_{\Theta}^{\ast}\right)
T_{\Phi}^{\ast}.
\end{align*}
By Theorem \ref{finiterank},
\[
T_{1}=-\left(  GK^{\ast}-P_{M}GK^{\ast}\right)  +W_{1}+W_{2}.
\]
Note that $P_{M}GK^{\ast}$ is a finite rank operator of the same form as
$W_{1}.$ Thus $T_{1}=-GK^{\ast}+W_{1}+W_{2}.$

Now we prove (iii). By taking adjoint, (\ref{starf}) becomes
\[
P_{M}S_{F}(I-P_{M})=KG^{\ast}-KG^{\ast}P_{M}.
\]
Set $W_{1}:=%
{\textstyle\sum_{i=1}^{k}}
x_{i}\otimes y_{i}$ and $W_{2}:=%
{\textstyle\sum_{j=1}^{m}}
u_{j}\otimes v_{j},$ where $x_{i},y_{i}\in M$, $u_{j},v_{j}\in M^{\perp}.$ By
Theorem \ref{finiterank},
\[
T_{2}=-\left(  WK^{\ast}-P_{M}WK^{\ast}\right)  -\left(  GK^{\ast}%
-P_{M}GK^{\ast}\right)  +W_{1}+W_{2}.
\]
Since $M=R(K),$ $P_{M}K=K$ and $K^{\ast}=K^{\ast}P_{M}.$ Thus $P_{M}WK^{\ast
}=P_{M}WK^{\ast}P_{M}$ and $P_{M}GK^{\ast}=P_{M}WK^{\ast}P_{M}.$ So
$P_{M}WK^{\ast}P_{M}$ and $P_{M}WK^{\ast}P_{M}$ are finite rank operators of
the same form as $W_{1}.$ Therefore, $T_{2}=-WK^{\ast}-GK^{\ast}+W_{1}+W_{2}.$
\end{proof}

\

We end up the paper with a question. Let $G$ be a given finite
rank operator on $H_{F}^{2}.$ If $M$ is $(S_{F}+G)$-invariant, then $M$ is
$S_{F}$-almost invariant and a subspace of $R(G)$ is a defect space. Thus
$M=R(T_{\Phi}\left(  I-T_{\Theta}T_{\Theta}^{\ast}\right)  )$. In
this case, the following question naturally arises:

\begin{question}
How do we find these $\Phi$ and $\Theta$ in terms of $G?$
\end{question}

\

\noindent \textit{Acknowledgments}. \ The authors are deeply
indebted to the referee of a previous version for many suggestions
on the structure, substance, and style of the paper, which have
helped improved the presentation. \ The work of the second named
author was supported by NRF(Korea) grant No. 2022R1A2C1010830. \ The
work of the fourth named author was supported by NRF(Korea) grant
No. 2021R1A2C1005428.

\


\

Caixing Gu

Department of Mathematics, California Polytechnic State University, San Luis
Obispo, CA 93407, USA

E-mail: cgu@calpoly.edu

\bigskip

In Sung Hwang

Department of Mathematics, Sungkyunkwan University, Suwon 16419, South Korea

E-mail: ihwang@skku.edu

\bigskip

Hyoung Joon Kim

Department of Mathematics and RIM, Seoul National University, Seoul 08826, Korea

E-mail: hjkim76@snu.ac.kr

\bigskip

Woo Young Lee

HCMC, Korea Institute for Advanced Study (KIAS), Seoul, 02455, Korea

E-mail: wylee@snu.ac.kr

\bigskip

Jaehui Park

Department of Mathematics Education, Chonnam National University,
Gwangju, 61186, Korea

E-mail: hiems1855@gmail.com

\end{document}